\theoremstyle{definition}
\newtheorem{theorem}{Theorem}[section]
\newtheorem{lemma}{Lemma}[section]
\newtheorem{remark}{Remark}[section]
\numberwithin{equation}{section}
\begin{document}

\title{New bounds in R.S. Lehman's estimates \\for the difference $\pi\left(  x\right)  -li\left(  x\right)  $}
\author{Michael Revers}
\date{}
\maketitle

\begin{abstract}
We denote by $\pi\left(  x\right)  $ the usual prime counting function and let
$li\left(  x\right)  $ the logarithmic integral of $x$. In 1966, R.S. Lehman
came up with a new approach and an effective method for finding an upper bound
where it is assured that a sign change occurs for $\pi\left(  x\right)
-li\left(  x\right)  $ for some value $x$ not higher than this given bound. In
this paper we provide further improvements on the error terms including an
improvement upon Lehman's famous error term $S_{3}$ in his original paper. We
are now able to eliminate the lower condition for the size-length $\eta$
completely. For further numerical computations this enables us to establish
sharper results on the positions for the sign changes. We illustrate with some
numerical computations on the lowest known crossover regions near $10^{316}$
and we discuss numerically on potential crossover regions below this value.

\textbf{2020 MSC Classification:} 11M26, 11N05, 11Y11, 11Y35

\textbf{Keywords:} Prime counting function, Skewes numbers, Riemann zeros,
Crossovers, Logarithmic integral.

\end{abstract}

\section{Introduction}

\noindent Let $\pi\left(  x\right)  $ denote the number of primes less than or
equal to $x$ and denote the logarithmic integral by%
\[
li\left(  x\right)  =\int_{0}^{x}\frac{1}{\log t}dt.
\]
In 1791, Gauss conjectured that $\pi\left(  x\right)  $ behaves asymptotically
like $x/\log\left(  x\right)  $ as $x$ tends to infinity. Later in 1849, Gauss
suggested that the logarithmic integral $li\left(  x\right)  $ gives a much
better approximation for $\pi\left(  x\right)  .$ Gauss also noted that the
logarithmic integral constantly overestimates the prime counting function,
that is $\pi\left(  x\right)  < li\left(  x\right)  $. Gauss checked this for
any $x$ in the interval $\left[  2, 3000000 \right]  $. Large computations
have now shown that $\pi\left(  x\right)  < li\left(  x\right)  $ holds for
all $x \leq10^{19}$, see (\cite{B\"uthe}, Theorem 2). For a long period it was
conjectured that $\pi\left(  x\right)  < li\left(  x\right)  $ holds for all
$x\geq2.$ However, in 1914, Littlewood \cite{Littlewood} managed to prove that
$\pi\left(  x\right)  -li\left(  x\right)  $ changes sign infinitely often. In
fact he proved that for some positive constant $c$ there are arbitrarily large
values of $x$ such that
\[
\pi\left(  x\right)  -li\left(  x\right)  >c\frac{\sqrt{x}\log\log\log x}{\log
x}\text{ and }\pi\left(  x\right)  -li\left(  x\right)  <-c\frac{\sqrt{x}%
\log\log\log x}{\log x}.
\]
Unfortunately, Littlewood's proof did not provide an effective method for
determining or estimating an $x$ where we may expect a sign change or in other
words a crossover for the difference $\pi\left(  x\right)  -li\left(
x\right)  .$ In 1933, Skewes \cite{Skewes1}, depending on the validity of the
Riemann hypothesis, has obtained a first effective upper bound for a crossover
at a value $10^{10^{10^{34}}}.$ Later \cite{Skewes2} in 1955 he obtained an
unconditional bound at a value $10^{10^{10^{964}}}$. In 1966, R.S. Lehman
\cite{Lehman} came up with a completely new approach, a so called integrated
version of Riemann's explicit formula. Roughly speaking, his idea was to
integrate the function $u\rightarrow\pi\left(  e^{u}\right)  -li\left(
e^{u}\right)  $ against a Gaussian kernel over a carefully chosen interval
$\left[  \omega-\eta, \omega+\eta\right]  $. The resulting definite integral
is then denoted by $I\left(  \omega,\eta\right)  $. Lehman's method amounts
then to finding suitable values $\omega$ and $\eta$ such that this integral
becomes a positive value, that is
\[
I\left(  \omega,\eta\right)  >0.
\]
Now, by virtue of the positivity of the Gaussian kernel, the term $\pi\left(
e^{u}\right)  -li\left(  e^{u}\right)  $ must admit some positive values for
$u$ in the above mentioned search interval. In \cite{Lehman}, Lehman succeeded
to prove that there must be a crossover near $10^{1165}$. Thus, he brought
down considerably the formerly obtained bounds from Skewes. Since that time
improvements and refinements regarding error estimates and search regions have
been obtained, beginning by te Riele \cite{teRiele} in 1987, Bays and Hudson
\cite{BaysandHudson} in 2000, Plymen and Chao \cite{ChaoandPlymen} in 2010,
Saouter and Demichel \cite{SaouterandDemichel} in 2010, and the latest
available result by Saouter, Demichel and Trudgian
\cite{SaouterandDemichelandTrudgian} in 2015. See some details on the history
and the locations of the crossover regions in Table \ref{historytable}. It is
not known whether we have still touched the first sign change at
$x\simeq1.39\times10^{316}$. The only paper which goes into some details of
numerical investigations for earlier candidates is that by Bays and Hudson,
were they present the first available computer plots of $\pi\left(  x\right)
-li\left(  x\right)  $ in logarithmic scale for values of $x$ from $10^{6}$ up
to $10^{400}$ by plotting over $10000$ values. See (\cite{BaysandHudson},
Figure 1a, 1b). Bays and Hudson obtained possible earlier candidates for
crossovers near $10^{176},10^{179},10^{190},10^{260},10^{298}$ together with
some other possible but minor attractive candidates. For a short discussion on
this topic we refer to section \ref{Near crossovers}.

\section{R.S. Lehman's result and further improvements}

\subsection{The result of R.S. Lehman}

\noindent We begin by presenting the following theorem of Lehman
\cite{Lehman}. As stated before, to consider an integrated version for
$\pi\left(  x\right)  -li\left(  x\right)  $ was an innovative idea. Moreover
Lehman's method did not assume the Riemann hypothesis in its full strength but
only up to a certain height $A$. Therefore his theorem contains an additional
extra error term, the $S_{6}$ term, in case the Riemann hypothesis is not true
in general. As a final remark, Lehman's method takes use of the computation of
the zeta zeros up to a height $T\leq A$ inside the critical strip. We have

\begin{theorem}
\label{theorem21} Let $A$ be a positive number such that $\beta=\frac{1}{2}$
for all zeros $\rho=\beta+i\gamma$ of $\zeta\left(  s\right)  $ for which
$0<\gamma\leq A$. Let $\alpha,\eta$ and $\omega$ be positive numbers such that
$\omega-\eta>1$ and the conditions%
\begin{align}
\frac{4A}{\omega}  &  \leq\alpha\leq A^{2}\text{,}\label{LehmanCondition1}\\
\frac{2A}{\alpha}  &  \leq\eta<\frac{\omega}{2} \label{LehmanCondition2}%
\end{align}
hold. Let%
\begin{align*}
K\left(  x\right)   &  =\sqrt{\frac{\alpha}{2\pi}}e^{-\frac{\alpha}{2}x^{2}%
},\\
I\left(  \omega,\eta\right)   &  =\int_{\omega-\eta}^{\omega+\eta}K\left(
u-\omega\right)  ue^{-u/2}\left(  \pi\left(  e^{u}\right)  -li\left(
e^{u}\right)  \right)  du.
\end{align*}
Then, for $2\pi e<T\leq A,$%
\[
I\left(  \omega,\eta\right)  =-1-\sum_{\substack{\rho\\0<\left\vert
\gamma\right\vert \leq T}}\frac{e^{i\gamma\omega}}{\rho}e^{-\gamma^{2}%
/2\alpha}+R
\]
where $\left\vert R\right\vert \leq S_{1}+S_{2}+S_{3}+S_{4}+S_{5}+S_{6}$ with%
\begin{align*}
S_{1}  &  =\frac{3}{\omega-\eta}+4\left(  \omega+\eta\right)  e^{-\frac
{\omega-\eta}{6}},\\
S_{2}  &  =\frac{2e^{-\frac{\alpha}{2}\eta^{2}}}{\eta\sqrt{2\pi\alpha}},\\
S_{3}  &  =0.08\sqrt{\alpha}e^{-\frac{\alpha}{2}\eta^{2}},\\
S_{4}  &  =e^{-\frac{T^{2}}{2\alpha}}\left(  \frac{\alpha}{\pi T^{2}}\log
\frac{T}{2\pi}+\frac{8\log T}{T}+\frac{4\alpha}{T^{3}}\right)  ,\\
S_{5}  &  =\frac{0.05}{\omega-\eta},\\
S_{6}  &  =A\log Ae^{-\frac{A^{2}}{2\alpha}+\frac{\omega+\eta}{2}}\left(
\frac{4}{\sqrt{\alpha}}+15\eta\right)  .
\end{align*}
If the Riemann hypothesis holds, then conditions (\ref{LehmanCondition1}) and
(\ref{LehmanCondition2}) and the term $S_{6}$ (this is Lehman's original
$S_{3}$ term) in the estimate for $R$ may be omitted.
\end{theorem}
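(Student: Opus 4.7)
The plan is to substitute an unconditional explicit formula for $\pi(e^u)-li(e^u)$ into $I(\omega,\eta)$, interchange summation and integration, and then use the Fourier transform of a Gaussian to reduce the sum over zeros to a closed form. The natural starting point is Riemann's decomposition
\[
\pi(x)-li(x) = \sum_{n\ge 1}\frac{\mu(n)}{n}\bigl(\Pi(x^{1/n}) - li(x^{1/n})\bigr)
\]
together with the classical explicit formula
\[
\Pi(x)-li(x) = -\log 2 + \int_x^\infty \frac{dt}{t(t^2-1)\log t} - \sum_{\rho} li(x^\rho).
\]
After the substitution $x=e^u$ and multiplication by $ue^{-u/2}$, the dominant pieces are (i) the $n=1$ zero sum $-\sum_\rho li(e^{u\rho})$ and (ii) the $n=2$ contribution $-\tfrac{1}{2}li(e^{u/2})$, which produces the constant main term $-1$ after integration against the Gaussian.

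First I would verify the two explicit terms. For the $n=2$ piece one uses $\tfrac{1}{2}li(e^{u/2}) = e^{u/2}/u + O(e^{u/2}/u^2)$, so that $ue^{-u/2}\cdot(-\tfrac{1}{2}li(e^{u/2})) = -1 + O(1/u)$, and the normalization $\int_{-\infty}^{\infty} K(u-\omega)\,du = 1$ yields exactly $-1$ up to Gaussian tail errors. For the zero sum one approximates $li(e^{u\rho})$ by its leading asymptotic $e^{u\rho}/(u\rho)$; writing $\rho = \tfrac{1}{2}+i\gamma$ under the hypothesis on zeros below height $A$, the factor $ue^{-u/2}$ turns this into $-e^{i\gamma u}/\rho$, and the Gaussian Fourier identity
\[
\int_{-\infty}^{\infty} K(u-\omega)\, e^{i\gamma u}\,du = e^{i\gamma\omega}\, e^{-\gamma^2/(2\alpha)}
\]
produces the stated sum once the tail $|\gamma|>T$ is discarded into an error term.

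Every $S_j$ records exactly one of the discrepancies in these reductions. The term $S_1$ absorbs the higher Riemann components $\pi(x^{1/n})$ for $n\ge 3$ together with the auxiliary integral $\int_x^\infty dt/(t(t^2-1)\log t)$; $S_2$ bounds the tails of the Gaussian outside $[\omega-\eta,\omega+\eta]$, where the decay $e^{-\alpha\eta^2/2}$ appears naturally; $S_3$ controls the asymptotic error in replacing $li(e^{u\rho})$ by $e^{u\rho}/(u\rho)$ summed against the Gaussian; $S_4$ handles the tail $|\gamma|>T$ of the zero sum using the zero-counting estimate $N(T+1)-N(T)\ll\log T$ and the Gaussian damping $e^{-T^2/(2\alpha)}$; $S_5$ measures the difference between $\tfrac{1}{2}li(e^{u/2})$ and the leading piece $e^{u/2}/u$ used in the main-term calculation; and $S_6$ deals with hypothetical zeros of height $|\gamma|>A$ that are not assumed on the critical line, bounded trivially by $\beta<1$, $|x^\rho|\le x$, and the Gaussian cutoff factor $e^{-A^2/(2\alpha)+(\omega+\eta)/2}$.

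The cleanest steps are the Gaussian Fourier identity and the derivation of the $-1$ main term, which are straightforward computations. The delicate step is $S_3$: bounding $\sum_\rho(li(e^{u\rho}) - e^{u\rho}/(u\rho))$ uniformly in $u\in[\omega-\eta,\omega+\eta]$ requires a careful higher-order expansion of $li$ along the critical line combined with the density bound $N(T)\ll T\log T$, in such a way that the resulting series converges and produces an explicit numerical constant rather than a $O$-symbol. The $S_6$ bound similarly rests on a careful balance between the convexity bound for $\zeta$ and the Gaussian damping, and this balance is precisely what forces conditions \eqref{LehmanCondition1} and \eqref{LehmanCondition2}. The main obstacle is therefore not conceptual but numerical: extracting fully explicit constants in every one of $S_1,\dots,S_6$ simultaneously, and verifying that the combined bound is sharp enough to make the scheme usable in practice.
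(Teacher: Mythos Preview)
Your overall architecture is right: start from the Riemann--von Mangoldt formula, isolate the $n=2$ piece to get the $-1$, use the Gaussian Fourier identity on the zero sum, and sweep every discrepancy into one of the $S_j$. Two of your attributions, however, are switched, and your treatment of $S_6$ misses the central idea of Lehman's proof.

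\medskip
\textbf{The swap of $S_3$ and $S_5$.} In Lehman's argument (and in the paper's proof of the refined Theorem~\ref{theorem31}), the term $S_3=0.08\sqrt{\alpha}\,e^{-\alpha\eta^2/2}$ does \emph{not} come from the asymptotic error in $li(e^{u\rho})$. It comes from truncating the Fourier integral: one needs $\int_{-\eta}^{\eta}K(x)e^{i\gamma x}\,dx$, but the closed form $e^{-\gamma^2/(2\alpha)}$ is the full-line integral; the discrepancy $2\int_\eta^\infty K(x)\cos(\gamma x)\,dx$ is bounded by $2K(\eta)/|\gamma|$ (Lemma~\ref{Kxestimate3}) and then summed using $\sum_\gamma 1/\gamma^2$. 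Conversely, $S_5=0.05/(\omega-\eta)$ is exactly the remainder in the one-term expansion $li(e^{u\rho})=e^{u\rho}/(u\rho)+e^{u\rho}\int_0^\infty e^{-t}(u\rho-t)^{-2}\,dt$; bounding the integral by $1/(u\gamma)^2$ and summing over $\gamma$ gives $2\sum_\gamma \gamma^{-2}/(\omega-\eta)\approx 0.05/(\omega-\eta)$. Your description of $S_5$ (the error in $\tfrac12 li(e^{u/2})$ versus $e^{u/2}/u$) actually produces a term of size $2/(\omega-\eta)$, which is absorbed into $S_1$, not $S_5$.

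\medskip
\textbf{The real gap: $S_6$.} Your account of $S_6$ (``bounded trivially by $\beta<1$, $|x^\rho|\le x$, and the Gaussian cutoff factor $e^{-A^2/(2\alpha)}$''; ``convexity bound for $\zeta$'') does not work. For a zero with $|\gamma|>A$ and unknown $\beta\in(0,1)$ there is no Gaussian damping available: the identity $\int K(x)e^{i\gamma x}\,dx=e^{-\gamma^2/(2\alpha)}$ only applies after you have already reduced $li(e^{u\rho})$ to $e^{i\gamma u}/\rho$, which requires $\beta=\tfrac12$. A crude bound $|ue^{-u/2}li(e^{u\rho})|\le e^{u(\beta-1/2)}/|\gamma|$ gives at best $e^{(\omega+\eta)/2}\sum_{|\gamma|>A}|\gamma|^{-1}$, which diverges. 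Lehman's actual device is to write the integrand as $e^{u(\rho-1/2)}f_\rho(u)$ with $f_\rho$ analytic in a sector, perform $N=[A^2/\alpha]$ integrations by parts to gain a factor $|\gamma|^{-N}$, and bound each $|f_\rho^{(n)}|$ via Cauchy's formula on a circle of radius $r=\sqrt{n/\alpha}$. The factor $e^{-A^2/(2\alpha)}$ then emerges from $N!(\alpha e/N)^{N/2}/A^N$ together with Stirling, not from any Gaussian integral. Conditions \eqref{LehmanCondition1}--\eqref{LehmanCondition2} are exactly what guarantee that $N\ge 1$ and that the Cauchy circles remain inside the sector of analyticity; they have nothing to do with a convexity bound for $\zeta$. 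This iterated-integration-by-parts/Cauchy-estimate mechanism is the one genuinely nontrivial idea in the proof, and it is absent from your sketch.
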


\noindent As one easily sees, there are a lot of parameters involved. We
explain them briefly.

\begin{table}[H]
\begin{center}
{\footnotesize
\begin{tabular}
[c]{|r|l|}\hline
\cellcolor{orange}$\omega$ & Exponent of the center location of the
interval.\\
\cellcolor{orange}$\eta$ & Exponent of the radius of the interval, defining
its sharpness.\\
\cellcolor{orange}$\alpha$ & Peak behavior of the Gaussian kernel.\\
\cellcolor{orange}$A$ & Height on the imaginary axis for which the Riemann
hypothesis holds true.\\
\cellcolor{orange}$T$ & Height on the imaginary axis for which the zeta zeros
are computed.\\\hline
\end{tabular}
}
\end{center}
\caption{Parameters in Lehman's theorem.}%
\end{table}

\subsection{Improvements}

It was shown by te Riele \cite{teRiele} that there is a crossover near $6.663
\times10^{370}$. He used improved parameters and 50000 zeta zeros. Later Bays
and Hudson \cite{BaysandHudson} did similar with $10^{6}$ zeros, 20 times that
of te Riele. They succeeded in showing a crossover near $1.398 \times10^{316}%
$. Chao and Plymen \cite{ChaoandPlymen} were the first to make modifications
to Lehman's theorem in reducing the constant in the critical leading $S_{1}$
and $S_{5}$ term from 3.05 to 2.1111. Later improvements were given by Saouter
and Demichel \cite{SaouterandDemichel} by using an improved estimate for the
prime counting function established by Dusart, see
(\cite{SaouterandDemichelandTrudgian}, Theorem 2.4). Saouter, Demichel and
Trudgian \cite{SaouterandDemichelandTrudgian} improved further by modifying
the weight function in the integral against the Gaussian kernel. In Figure
\ref{historyfigure} we present a visualization on the improved crossover
regions from the data in Table \ref{historytable}. The horizontal lines
represent the intervals $\left[  \omega-\eta, \omega+\eta\right]  $.

\begin{table}[H]
\begin{center}
{\scriptsize
\begin{tabular}
[c]{|r|r|r|r|r|r|}\hline
\rowcolor{orange} Name & $\omega$ & $\eta$ & $\alpha$ & $A$ & Zeta
Zeros\\\hline
R.S. Lehman, 1966 & 2682.976800000 & 0.0340000000 & $10^{7}$ & 170000 &
12500\\
H. te Riele, 1987 & 853.852286000 & 0.0045000000 & $2 \times10^{8}$ & 450000 &
50000\\
Bays, Hudson, 1999 & 727.952088813 & 0.0020000000 & $10^{10}$ & $10^{7}$ &
$10^{6}$\\
Plymen, Chao, 2010 & 727.952018000 & 0.0001600000 & $1.3 \times10^{11}$ &
$1.02 \times10^{7}$ & $2 \times10^{6}$\\
Saouter, Demichel, 2010 & 727.951335792 & 0.0000228333 & $6 \times10^{12}$ &
$6.85 \times10^{7}$ & $2.2 \times10^{7}$\\
Trudgian et al, 2015 & 727.951335426 & 0.0000014100 & $1.6 \times10^{15}$ &
$1.13 \times10^{9}$ & $5.25 \times10^{8}$\\\hline
\end{tabular}
}
\end{center}
\caption{Improvements on the crossover regions.}%
\label{historytable}%
\end{table}

\begin{figure}[H]
\begin{center}
\includegraphics[width=\textwidth]{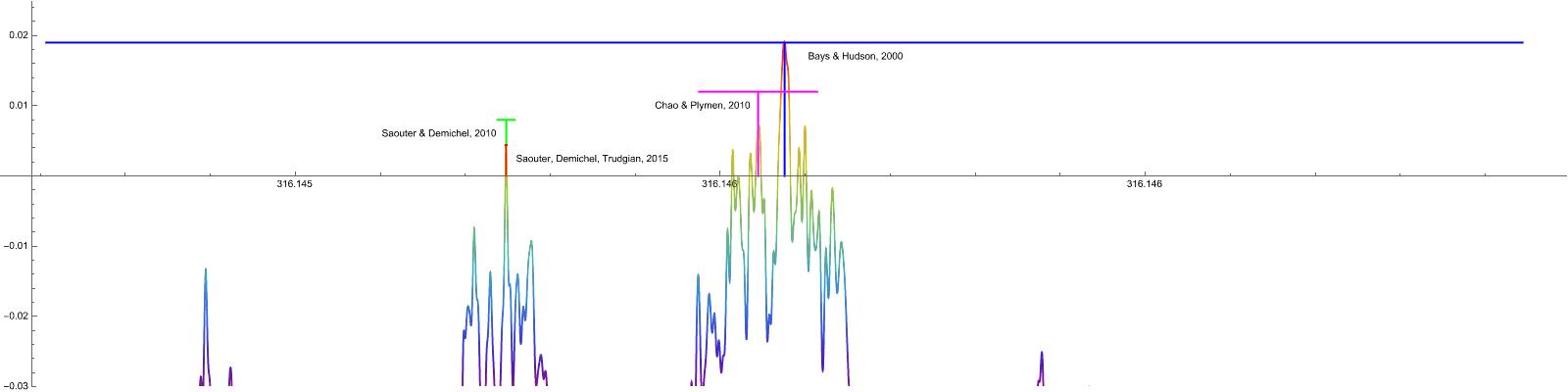}
\end{center}
\caption{Improvements in the crossover regions near $\omega= 727.95$.}
\label{historyfigure}
\end{figure}

\medskip\noindent In order to keep on improvements which are directly
comparable to Lehman's result, we present the results of Saouter and Demichel
(\cite{SaouterandDemichel}, Theorem 3.2) from 2010.

\begin{theorem}
\label{theorem22} Let $A$ be a positive number such that $\beta=\frac{1}{2}$
for all zeros $\rho=\beta+i\gamma$ of $\zeta\left(  s\right)  $ for which
$0<\gamma\leq A$. Let $\alpha,\eta$ and $\omega$ be positive numbers such that
$\omega-\eta>25.57$ and the conditions%
\begin{align}
\frac{4A}{\omega}  &  \leq\alpha\leq A^{2}\text{,}\label{Condition1}\\
\frac{2A}{\alpha}  &  \leq\eta<\frac{\omega}{2} \label{Condition2}%
\end{align}
hold. Let%
\begin{align*}
K\left(  x\right)   &  =\sqrt{\frac{\alpha}{2\pi}}e^{-\frac{\alpha}{2}x^{2}%
},\\
I\left(  \omega,\eta\right)   &  =\int_{\omega-\eta}^{\omega+\eta}K\left(
u-\omega\right)  ue^{-u/2}\left(  \pi\left(  e^{u}\right)  -li\left(
e^{u}\right)  \right)  du.
\end{align*}
Then, for $2\pi e<T\leq A,$%
\[
I\left(  \omega,\eta\right)  =-1-\sum_{\substack{\rho\\0<\left\vert
\gamma\right\vert \leq T}}\frac{e^{i\gamma\omega}}{\rho}e^{-\gamma^{2}%
/2\alpha}+R
\]
where $\left\vert R\right\vert \leq S_{1}^{\prime}+S_{2}+S_{3}+S_{4}%
+S_{5}+S_{6}$ with%
\[
S_{1}^{\prime}=\frac{2}{\omega-\eta}+\frac{10.04}{\left(  \omega-\eta\right)
^{2}}+\log2\left(  \omega+\eta\right)  e^{-\frac{\omega-\eta}{2}}+\frac
{2}{\log2}\left(  \omega+\eta\right)  e^{-\frac{\omega-\eta}{6}}%
\]
and the other terms remain unchanged. If the Riemann hypothesis holds, then
conditions (\ref{Condition1}) and (\ref{Condition2}) and the term $S_{6}$ in
the estimate for $R$ may be omitted.
\end{theorem}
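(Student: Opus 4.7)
The plan is to follow Lehman's proof of Theorem \ref{theorem21} essentially verbatim and to replace only the single estimate that produces his $S_{1}$ term; the terms $S_{2},S_{3},S_{4},S_{5},S_{6}$, together with the main formula
\[
I(\omega,\eta)=-1-\sum_{\substack{\rho\\0<|\gamma|\le T}}\frac{e^{i\gamma\omega}}{\rho}e^{-\gamma^{2}/2\alpha}+R,
\]
arise from steps that do not depend on how tightly we relate $\pi(x)$ to the Chebyshev function, so they carry over unchanged. Accordingly I would first reproduce Lehman's skeleton: write $\psi_{0}(x)=x-\sum_{\rho}x^{\rho}/\rho-\log(2\pi)-\tfrac{1}{2}\log(1-x^{-2})$, truncate the zero sum at height $T$, integrate the resulting identity against $K(u-\omega)ue^{-u/2}$ on $[\omega-\eta,\omega+\eta]$, and verify that the Gaussian integrals of the individual terms give the stated main contribution plus the $-1$ coming from the $x=e^{u}$ term (together with $S_{2},S_{3}$ from extending/truncating the Gaussian integral, $S_{4}$ from the tail $|\gamma|>T$, $S_{5}$ from the $\log(2\pi)$ and $\log(1-e^{-2u})$ corrections, and $S_{6}$ from possible off-critical-line zeros above height $A$).

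Next I would focus on the single new ingredient. Lehman's $S_{1}$ arose from bounding
\[
\int_{\omega-\eta}^{\omega+\eta}K(u-\omega)ue^{-u/2}\bigl[(\pi(e^{u})-\mathrm{li}(e^{u}))-(\psi(e^{u})-e^{u})/u\bigr]du
\]
by a crude estimate involving the sum over prime powers and an elementary bound for $\pi$. In place of this, I would insert Dusart's explicit inequality as quoted in \cite{SaouterandDemichelandTrudgian}, Theorem 2.4, which gives a Chebyshev-to-prime-counting conversion with explicit coefficients in $1/\log x$ and $1/(\log x)^{2}$. Substituting $x=e^{u}$ with $u\in[\omega-\eta,\omega+\eta]$, the $1/\log x$ piece produces the $2/(\omega-\eta)$ term and the $1/(\log x)^{2}$ piece produces the $10.04/(\omega-\eta)^{2}$ term after majorising $1/\log x\le 1/(\omega-\eta)$ and using that $K\cdot ue^{-u/2}$ integrates to at most $1$ times the relevant factor of $u$.

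The contribution of higher prime powers I would handle separately by splitting $\sum_{k\ge2}\sum_{p^{k}\le e^{u}}\log p$ into the $k=2$ part, bounded using $\pi(x^{1/2})\log x\le x^{1/2}\log 2$ on the dyadic range, which after the $ue^{-u/2}$ weighting and trivial estimation of the Gaussian integral yields the $\log 2\cdot(\omega+\eta)e^{-(\omega-\eta)/2}$ term, and the $k\ge 3$ part, handled by $\sum_{k\ge 3}x^{1/k}/k\ll x^{1/3}/\log 2$, producing the $\frac{2}{\log 2}(\omega+\eta)e^{-(\omega-\eta)/6}$ term. The condition $\omega-\eta>25.57$ is exactly what is needed for Dusart's inequality to apply at $x=e^{\omega-\eta}$ with the stated constants, and under RH the parameter constraints (\ref{Condition1}), (\ref{Condition2}) and the $S_{6}$ term become unnecessary because the explicit formula for $\psi_{0}$ then holds without any off-line contribution.

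The main obstacle, in my view, is not conceptual but bookkeeping: one must verify that the constants $2$, $10.04$, $\log 2$ and $2/\log 2$ really emerge from the chosen version of Dusart's bound and from the split of prime powers above, and that the interplay with the Gaussian weight does not lose factors. In particular one has to check that, after the weighting by $K(u-\omega)ue^{-u/2}$ and integration, the monotone factors of $u$ in $[\omega-\eta,\omega+\eta]$ are replaced by $\omega+\eta$ in the exponentially small prime-power terms and by $(\omega-\eta)^{-1}$ (resp. its square) in the polynomially small $\pi$-to-$\psi$ conversion terms; once this is confirmed, assembling $S_{1}'$ with all other terms unchanged from Lehman completes the proof.
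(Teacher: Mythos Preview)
Your skeleton is not Lehman's. Lehman (and Saouter--Demichel after him) do \emph{not} start from the von Mangoldt formula for $\psi_{0}$; they start from the Riemann--von Mangoldt formula for $\Pi_{0}(x)=\mathrm{li}(x)-\sum_{\rho}\mathrm{li}(x^{\rho})+\cdots$ and then pass to $\pi(x)$ via $\Pi_{0}(x)=\pi_{0}(x)+\tfrac12\pi_{0}(x^{1/2})+\cdots$. After multiplying by $ue^{-u/2}$ one expands each $\mathrm{li}(e^{u\rho})$ using the asymptotic expansion $\mathrm{li}(e^{z})=e^{z}\bigl(1/z+\int_{0}^{\infty}e^{-t}(z-t)^{-2}\,dt\bigr)$; the leading piece produces the sum $\sum e^{i\gamma\omega}/\rho\cdot e^{-\gamma^{2}/2\alpha}$ and the integral remainder is what gives $S_{5}$. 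The constant $-1$ is \emph{not} the ``$x=e^{u}$ term'' but the leading order of $-\tfrac12\pi(x^{1/2})$: Dusart's bound is $\pi(y)\le\frac{y}{\log y}(1+\frac{1}{\log y}+\frac{2.51}{\log^{2}y})$ applied with $y=x^{1/2}$, so that $-\tfrac12\pi(x^{1/2})\ge-\frac{x^{1/2}}{\log x}(1+\tfrac{2}{\log x}+\tfrac{10.04}{\log^{2}x})$; after the weight $ue^{-u/2}$ this becomes $-1-2/u-10.04/u^{2}$, whence $-1$ and the first two pieces of $S_{1}'$. The exponential pieces of $S_{1}'$ come from $\frac{1}{3}\pi(x^{1/3})\frac{\log x}{\log 2}$ (the tail $k\ge3$ of the $\Pi_{0}$ expansion) and the $-\log 2$ constant in the $\Pi_{0}$ formula, not from $\sum_{k\ge2}\sum_{p^{k}}\log p$.

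Your proposed route through $\psi_{0}$ would force you to control
\[
ue^{-u/2}\Bigl[(\pi(e^{u})-\mathrm{li}(e^{u}))-\tfrac{1}{u}(\psi(e^{u})-e^{u})\Bigr],
\]
and by Abel summation this contains the term $ue^{-u/2}\int_{2}^{e^{u}}\frac{\theta(t)-t}{t\log^{2}t}\,dt$. Under RH this integral is only $O(e^{u/2})$ by absolute values, so the whole expression is $O(u)$, not $O(1)$; the needed cancellation is exactly what the $\mathrm{li}(x^{\rho})$ expansion packages automatically as $x^{\rho}/(\rho^{2}\log^{2}x)$, and you never address it. Likewise $S_{5}=0.05/(\omega-\eta)$ has nothing to do with $\log(2\pi)$ or $\log(1-e^{-2u})$: those contributions are of order $ue^{-u/2}$ and are absorbed elsewhere. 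In short, the improvement from $S_{1}$ to $S_{1}'$ is a one-line replacement of the crude bound $\pi(y)\le 2y/\log y$ by Dusart's bound \emph{inside the $\Pi_{0}$ framework}; your $\psi_{0}$ scaffolding is a different and harder calculation that your outline does not carry through.
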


\begin{remark}
Roger Plymen kindly informed me that by an inequality of Dusart \cite{Dusart} 
from 2018 one can improve the $S_{1}^{\prime}$ to a better form. This
refinement of Lehman's theorem can also be found in (\cite{Plymen}, Theorem
6.6). To obtain the best possible results, we incorporate this improvement in
the subsequent theorem.
\end{remark}

\section{Main theorem}

We present the following further improvements.

\begin{theorem}
\label{theorem31} Let $A$ be a positive number such that $\beta=\frac{1}{2}$
for all zeros $\rho=\beta+i\gamma$ of $\zeta\left(  s\right)  $ for which
$0<\gamma\leq A$. Let $\alpha,\eta$ and $\omega$ be positive numbers such that
$w-\eta\geq44.22$ and the conditions%
\begin{align}
\frac{5A}{4\omega}  &  \leq\alpha\leq A^{2},\label{Condition3}\\
0  &  <\eta<\frac{\omega}{100} \label{Condition4}%
\end{align}
hold. Let%
\begin{align*}
K\left(  x\right)   &  =\sqrt{\frac{\alpha}{2\pi}}e^{-\frac{\alpha}{2}x^{2}%
},\\
I\left(  \omega,\eta\right)   &  =\int_{\omega-\eta}^{\omega+\eta}K\left(
u-\omega\right)  ue^{-u/2}\left(  \pi\left(  e^{u}\right)  -li\left(
e^{u}\right)  \right)  du.
\end{align*}
Then, for $2\pi e<T\leq A,$%
\[
I\left(  \omega,\eta\right)  \geq-1-\sum_{\substack{\rho\\0<\left\vert
\gamma\right\vert \leq T}}e^{i\gamma\omega}\left(  \frac{1}{\rho}+\frac
{1}{\omega\rho^{2}}\right)  e^{-\gamma^{2}/2\alpha}+R,
\]
where $\left\vert R\right\vert \leq R_{1}+R_{2}+R_{3}+R_{4}+R_{5}+R_{6}$ with%
\begin{align*}
R_{1}  &  =\frac{2}{\omega-\eta}+\frac{8}{\left(  \omega-\eta\right)  ^{2}%
}+\frac{58.56}{\left(  \omega-\eta\right)  ^{3}}+\log2\left(  \omega
+\eta\right)  e^{-\frac{\omega-\eta}{2}}+\frac{2\left(  \omega+\eta\right)
e^{-\frac{\omega-\eta}{6}}}{\log2},\\
R_{2}  &  =\frac{0.037}{\omega\sqrt{\alpha}}\left(  \min\left\{
0.082\alpha,\frac{1}{\eta}\right\}  e^{-\frac{\alpha}{2}\eta^{2}}+\frac
{1}{\omega-\eta}\left(  1-e^{-\frac{\alpha}{2}\eta^{2}}\right)  \right)  ,\\
R_{3}  &  =0.074\sqrt{\alpha}e^{-\frac{\alpha}{2}\eta^{2}},\\
R_{4}  &  =e^{-\frac{T^{2}}{2\alpha}}\left(  \frac{\alpha}{\pi T^{2}}\log
\frac{T}{2\pi}+\frac{8\log T}{T}+\frac{4\alpha}{T^{3}}\right)  \left(
1+\frac{1}{\omega T}\right)  ,\\
R_{5}  &  =\frac{0.003}{\left(  \omega-\eta\right)  ^{2}},\\
R_{6}  &  =\left(  1+\frac{22}{Aw}\right)  A\log A\left(  \frac{8.283}%
{A}e^{-\frac{\alpha}{4}\eta^{2}+\frac{\omega+\eta}{2}}+7.152\eta
e^{-\frac{A^{2}}{2\alpha}+\frac{\omega+\eta}{2}}\right)  ,
\end{align*}
If the Riemann hypothesis holds, then conditions (\ref{Condition3}) and
(\ref{Condition4}) and the term $R_{6}$ in the estimate for $R$ may be omitted.
\end{theorem}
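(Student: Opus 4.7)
The plan is to follow the architecture of Lehman's original argument (Theorem \ref{theorem21}), with the Saouter--Demichel--Plymen version (Theorem \ref{theorem22}) as baseline, and to tighten every step systematically. One starts by passing from $\pi(e^{u}) - li(e^{u})$ to the prime-power counting function $\Pi$ via the M\"obius identity $\pi(x) = \sum_{n\geq 1} \mu(n)\Pi(x^{1/n})/n$, absorbing the contributions $n\geq 2$ into $R_{1}$ using the sharpened Dusart (2018) inequalities pointed out by Plymen. Then one substitutes Riemann's explicit formula
\[
\Pi(x) - li(x) = -\sum_{\rho} li(x^{\rho}) - \log 2 + \int_{x}^{\infty}\frac{dt}{t(t^{2}-1)\log t}
\]
into $I(\omega,\eta)$. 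The $-\log 2$ produces the constant $-1$ (the Gaussian mass integrated against $u e^{-u/2}$ being unity to the order we track), the tail integral yields $R_{5}$, and the sum over zeros is the main object to estimate.

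The origin of the new $1/(\omega\rho^{2})$ correction lies in keeping one extra term in the asymptotic expansion
\[
li(e^{\rho u}) = \frac{e^{\rho u}}{\rho u}\left(1 + \frac{1}{\rho u} + O\!\left(\frac{1}{(\rho u)^{2}}\right)\right).
\]
After multiplying by $K(u-\omega)\,u\,e^{-u/2}$ and integrating over $[\omega-\eta,\omega+\eta]$, the prefactor $u$ absorbs the $1/u$ and the Gaussian integration delivers the classical $e^{i\gamma\omega}e^{-\gamma^{2}/2\alpha}/\rho$ term from Theorem \ref{theorem21}; retaining the next term and replacing $1/u$ by $1/\omega$ on the support of $K$ yields the refined $e^{i\gamma\omega}e^{-\gamma^{2}/2\alpha}/(\omega\rho^{2})$ contribution. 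The discrepancy $|1/u-1/\omega|\leq\eta/\omega^{2}$ on the interval, together with the finite-range Gaussian tails $\int_{|v|>\eta}K(v)\,dv$, is collected into $R_{2}$; the $\min\{0.082\alpha,1/\eta\}$ there comes from interpolating between two competing bounds on this Gaussian tail integral, one sharp when $\alpha\eta^{2}$ is large and the other when $\alpha\eta^{2}$ is small.

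For the remaining terms: $R_{1}$ incorporates one further order in the Dusart-type expansion of $\pi(x)-li(x)$ in inverse powers of $\log x$, producing the new $58.56/(\omega-\eta)^{3}$ contribution; $R_{3}$ tightens Lehman's $0.08$ to $0.074$ by a careful elementary estimate of $\int_{|v|>\eta} K(v)\,dv$ using the complementary error function; $R_{4}$ arises from truncating the zero sum at $T$ via the Riemann--von Mangoldt estimate $N(T+1)-N(T)=O(\log T)$, and the extra factor $(1+1/(\omega T))$ records the new $1/(\omega\rho^{2})$ piece inside the tail; $R_{6}$ treats the contribution of hypothetical off-line zeros at heights $|\gamma|>A$, split into a slice near $A$ (bounded by a zero-density estimate in a thin strip) and a bulk portion (bounded via the Gaussian decay $e^{-A^{2}/2\alpha}$), which yields the two-term structure with prefactor $(1+22/(A\omega))$.

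The central obstacle will be the complete elimination of the lower bound $\eta\geq 2A/\alpha$. In all previous proofs this condition ensured that the finite-range Gaussian integrals $\int_{-\eta}^{\eta}K$ differed from the full-line integrals by an exponentially small amount that could be harmlessly absorbed into $S_{2}$ and $S_{3}$. Here $\eta$ is allowed to be arbitrarily small subject only to $\eta<\omega/100$, so the finite-range integrals must be kept honestly, via $\int_{-\eta}^{\eta}K(v)\,dv = 1 - 2\Phi(-\eta\sqrt{\alpha})$ with $\Phi$ the standard normal distribution. Interpolating between this identity and the crude bound $\int_{-\eta}^{\eta}K\leq\min\{1,\eta\sqrt{\alpha/(2\pi)}\}$ forces the $\min$-structure of $R_{2}$; verifying that every subsequent estimate remains valid uniformly in the regime $\alpha\eta^{2}\to 0$ is simultaneously the technical key to removing the lower condition on $\eta$ and the principal bookkeeping burden of the proof.
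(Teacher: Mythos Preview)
Your overall architecture is right, but several attributions are off, and the main innovation of the paper --- the mechanism by which the lower bound $\eta\geq 2A/\alpha$ is eliminated --- is misidentified.

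First, some smaller corrections. The constant $-1$ does not come from the $-\log 2$ in the explicit formula; it is the leading term $-x^{1/2}/\log x$ of Dusart's bound for $\tfrac12\pi(x^{1/2})$, which after the substitution $x=e^{u}$ and multiplication by $ue^{-u/2}$ becomes exactly $-1$. The $-\log 2$ term instead contributes the $\log 2\,(\omega+\eta)e^{-(\omega-\eta)/2}$ piece of $R_{1}$. The term $R_{5}$ is \emph{not} the tail integral $\int_{x}^{\infty}\frac{dt}{t(t^{2}-1)\log t}$ (which is positive and simply dropped in the one-sided inequality); it is the error from truncating the asymptotic expansion of $li(e^{\rho u})$ at second order, i.e.\ the integral remainder $2!\int_{0}^{\infty}e^{-t}(u\rho-t)^{-3}\,dt$, bounded crudely by $4(\omega-\eta)^{-2}\sum_{\gamma>0}\gamma^{-3}$. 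The improvement of the constant in $R_{3}$ from $0.08$ to $0.074$ has nothing to do with the complementary error function; it is simply $8/\sqrt{2\pi}$ times the sharper numerical value $\sum_{\gamma>0}\gamma^{-2}<0.02311$.

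The serious gap is $R_{6}$. Zero-density estimates play no role. The paper follows Lehman's original device: write the contribution of each zero with $|\gamma|>A$ as $\sqrt{\alpha/2\pi}\,\rho^{-1}\int e^{u(\rho-1/2)}f_{\rho}(u)\,du$ with $f_{\rho}(z)=\rho z\,e^{-\rho z}li(e^{\rho z})e^{-\alpha(z-\omega)^{2}/2}$, integrate by parts $N=[A^{2}/\alpha]$ times, and bound each derivative $f_{\rho}^{(n)}$ by Cauchy's formula on a circle of radius $r$ inside the analyticity sector $|\arg z|\leq 3\pi/10$. Lehman chose a single radius and this forced $\eta\geq 2A/\alpha$. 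The key new idea here is to choose the radius $r=\sqrt{n/\alpha}$ \emph{depending on $n$}; the condition $\alpha\geq 5A/(4\omega)$ then guarantees $r\leq A/\alpha\leq 4\omega/5$, which together with $\eta<\omega/100$ keeps every such circle inside the sector. This yields $|f_{\rho}^{(n)}(x)|\leq (1+22/(A\omega))\,n!\,(\alpha e/n)^{n/2}e^{-\alpha(x-\omega)^{2}/4}$, and summing the resulting geometric-type series produces the two exponentials $e^{-\alpha\eta^{2}/4}$ and $e^{-A^{2}/(2\alpha)}$ in $R_{6}$. The lower bound on $\eta$ disappears because the contour radii are no longer tied to $\eta$ at all; your proposed mechanism via finite-range Gaussian bookkeeping is not where the obstruction lived.
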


\noindent Before we proceed further, we will give some remarks.

\begin{remark}
The improvements in Theorem \ref{theorem31} do not come for free. In the
summation for $I\left(  \omega,\eta\right)  $ we have to do some extra work by
summing also over the $1/\rho^{2}$ terms. However, this does not significantly
affect any computer running time.
\end{remark}

\begin{remark}
The new $R_{1}$ term is derived from (\cite{Plymen}, Theorem 6.6).
\end{remark}

\begin{remark}
The new $R_{2}$ term is induced by the double summation process and represents
now the previous $S_{2}$ term. Also, the $R_{4}$ term is induced by the double
summation process.
\end{remark}

\begin{remark}
In condition (\ref{Condition3}) we decrease the lower bound for $\alpha$ by a
factor of $5/16$ compared to (\ref{Condition1}) and in condition
(\ref{Condition4}) we eliminate the full left-hand restriction on $\eta$
compared to (\ref{Condition2}). At the same time the upper restriction in
(\ref{Condition4}) appears much stronger than in (\ref{Condition2}). However,
in all practical purposes this restriction does not play a relevant
computational role, since we are always interested in smallest possible values
for $\eta$. Compare this in Table \ref{historytable} where one sees that the
ratio $\eta/\omega$ is usually much smaller than $10^{-5}$.
\end{remark}

\begin{remark}
Via the double summation process we can reduce the $S_{5}$ term in Theorem
\ref{theorem22} into $R_{5}$ even by a higher order. Note, that error values
of size $\left(  \omega-\eta\right)  ^{-1}$ are part of the most critical
values in the error terms.
\end{remark}

\begin{remark}
Note that the $R_{6}$ term is of better order in its first exponential term
when we insert the original restriction $\eta\geq2A / \alpha$. Let us take a
closer look on this, when we insert the minimal possible lower bound $\eta= 2A
/ \alpha$ inside the original $S_{6}$ term now compared to the new $R_{6}$
term. We then have
\begin{align*}
S_{6}  &  = A\log Ae^{-\frac{A^{2}}{2\alpha}+\frac{\omega+\eta}{2}}\left(
\frac{4}{\sqrt{\alpha}}+15\eta\right) \\
&  = A\log A\left(  \frac{4}{\sqrt{\alpha}}e^{-\frac{\alpha}{8}\eta^{2}%
+\frac{\omega+\eta}{2}}+15\eta e^{-\frac{A^{2}}{2\alpha}+\frac{\omega+\eta}%
{2}}\right)  .
\end{align*}
Note the value $8$ in the exponent denominator as well as the $\sqrt{\alpha}$
term in the former denominator and keep in mind that we also have to certify
$\sqrt{\alpha} \leq A$, which comes from condition (\ref{Condition3}). Now we
have
\[
R_{6} =\left(  1+\frac{22}{Aw}\right)  A\log A\left(  \frac{8.283}{A}%
e^{-\frac{\alpha}{4}\eta^{2}+\frac{\omega+\eta}{2}}+7.152\eta e^{-\frac{A^{2}%
}{2\alpha}+\frac{\omega+\eta}{2}}\right)  ,
\]
without any further restriction on $\eta$ regarding the lower bound. Also
note, that the large numerator $A$ cancels out completely in the first exp
term. Now, looking to the new exponent denominator value $4$ in the $R_{6}$
term it becomes evident that we can tighten the value $\eta$ in $R_{6}$ at
least by a factor $1/\sqrt{2}$, thus arriving on the comparable size in the
$S_{6}$ term without any relevant loss in accuracy. Finally, the leading
magnification factor $1+22/A\omega$ is so close to the value $1$ that it does
not touch any relevant machine size accuracy.
\end{remark}

\begin{remark}
Looking to the result of Saouter, Trudgian and Demichel
(\cite{SaouterandDemichelandTrudgian}, Theorem 3.1) from 2015 they also used
the same double summation formula for computing their $I\left(  \omega
,\eta\right)  $ value. However, they defined $I\left(  \omega,\eta\right)  $
with a different kernel function, that means by
\[
I\left(  \omega,\eta\right)  =\int_{\omega-\eta}^{\omega+\eta}K\left(
u-\omega\right)  \frac{ue^{-u/2}}{1+\frac{2}{u}+\frac{10.04}{u^{2}}}\left(
\pi\left(  e^{u}\right)  -li\left(  e^{u}\right)  \right)  du.
\]
A natural question is whether their estimates can also be generalized in the
same way, especially their $R_{5}$ error term which is the corresponding term
to Lehman's original $S_{3}$ term, or to the $S_{6}$ term in Theorem
\ref{theorem21}. Here we can follow more or less the same bounding strategy as
in our proof. For more details we refer to section \ref{Trudgian}.
\end{remark}

\medskip\noindent The rest of the paper is organized as follows. First, to
keep the paper self-contained as far as possible, we proceed through the
proofs for the error terms in Theorem \ref{theorem31} by presenting the
relevant details, though we note that parts of calculations are already
published in other sources. In section \ref{Preliminaries} we start with some
preliminaries. In section \ref{ErrorTerms} we present the proofs for the error
terms $R_{1}$ to $R_{6}$. In section \ref{Numerical applications} we
demonstrate some numerical applications for the new bounds. In section
\ref{Trudgian} we discuss the corresponding refinement in the
Saouter-Trudgian-Demichel result from 2015. At least, in section
\ref{Near crossovers} we give a short discussion for near crossovers earlier
than $10^{316}$ together with some computational results and figures.

\section{Preliminaries}

\label{Preliminaries}

\subsection{The Riemann-von Mangoldt formula}

Let
\begin{equation}
\Pi\left(  x\right)  =\pi\left(  x\right)  +\frac{1}{2}\pi\left(
x^{1/2}\right)  +\frac{1}{3}\pi\left(  x^{1/3}\right)  +\cdots\label{Pipizero}%
\end{equation}
and let
\begin{align}
\Pi_{0}\left(  x\right)   &  =\lim_{\varepsilon\rightarrow0}\frac{1}{2}\left(
\Pi\left(  x+\varepsilon\right)  +\Pi\left(  x-\varepsilon\right)  \right)
,\nonumber\\
\pi_{0}\left(  x\right)   &  =\lim_{\varepsilon\rightarrow0}\frac{1}{2}\left(
\pi\left(  x+\varepsilon\right)  +\pi\left(  x-\varepsilon\right)  \right)
\label{Pizero}\\
&  =\left\{
\begin{array}
[c]{ll}%
\pi\left(  x\right)  -1/2, & x\text{ is prime,}\\
\pi\left(  x\right)  , & \text{otherwise.}%
\end{array}
\right. \nonumber
\end{align}
Since the right hand side in (\ref{Pipizero}) is a finite sum consisting of
$\left[  \log x/\log2\right]  $ terms we have%
\begin{equation}
\Pi_{0}\left(  x\right)  =\pi_{0}\left(  x\right)  +\frac{1}{2}\pi_{0}\left(
x^{1/2}\right)  +\frac{1}{3}\pi_{0}\left(  x^{1/3}\right)  +\cdots
\label{defPizero}%
\end{equation}
and%
\begin{align}
\frac{1}{3}\pi_{0}\left(  x^{1/3}\right)  +\frac{1}{4}\pi_{0}\left(
x^{1/4}\right)  +\cdots &  \leq\frac{1}{3}\pi_{0}\left(  x^{1/3}\right)
\left[  \frac{\log x}{\log2}\right] \nonumber\\
&  \leq\frac{1}{3}\pi\left(  x^{1/3}\right)  \frac{\log x}{\log2}.
\label{ineq01}%
\end{align}
The Riemann-von Mangoldt formula now states that for $x>1,$%
\begin{equation}
\Pi_{0}\left(  x\right)  =li\left(  x\right)  -\sum_{\rho}li\left(  x^{\rho
}\right)  +\int_{x}^{\infty}\frac{du}{\left(  u^{2}-1\right)  u\log u}-\log2,
\label{Mangoldt}%
\end{equation}
where $\rho$ runs over the zeta zeros inside the critical strip in increasing
order of $\left\vert \operatorname{Im}\left(  \rho\right)  \right\vert $ and
$li\left(  x^{\rho}\right)  $ is to be interpreted as $li\left(  e^{\rho\log
x}\right)  $. From (\ref{Pizero}), (\ref{defPizero}), (\ref{ineq01}) and
(\ref{Mangoldt}) we get for $x>1$ the estimate%
\begin{align}
\pi\left(  x\right)  -li\left(  x\right)   &  \geq-\frac{1}{2}\pi\left(
x^{1/2}\right)  -\sum_{\rho}li\left(  x^{\rho}\right)  -\frac{1}{3}\pi\left(
x^{1/3}\right)  \frac{\log x}{\log2}\nonumber\\
&  +\int_{x}^{\infty}\frac{du}{\left(  u^{2}-1\right)  u\log u}-\log2.
\label{Ineqpili}%
\end{align}
Now we use Dusart's estimate (\cite{Dusart}, Theorem 5.1) for $\pi\left(
x\right)  $ together with the classic bound%
\begin{align*}
\pi\left(  x\right)   &  \leq\frac{x}{\log x}\left(  1+\frac{1}{\log x}%
+\frac{2}{\log x}+\frac{7.32}{\log^{3}x}\right)  ,\text{ if }x\geq4\cdot
10^{9},\\
\pi\left(  x\right)   &  \leq\frac{2x}{\log x},\text{ if }x>1,
\end{align*}
to obtain from (\ref{Ineqpili})%
\begin{align}
\pi\left(  x\right)  -li\left(  x\right)   &  \geq-\sum_{\rho}li\left(
x^{\rho}\right)  -\frac{x^{1/2}}{\log x}\left(  1+\frac{2}{\log x}+\frac
{8}{\log^{2}x}+\frac{58.56}{\log^{3}x}\right) \nonumber\\
&  -\frac{2x^{1/3}}{\log2}+\int_{x}^{\infty}\frac{du}{\left(  u^{2}-1\right)
u\log u}-\log2. \label{ineq02}%
\end{align}
Since $44.22>\log\left(  \left(  4\cdot10^{9}\right)  ^{2}\right)  $ we obtain
from (\ref{ineq02}) together with $x=e^{u}$ for $u\geq44.22$ a first formula
for our further purposes, namely%
\begin{align}
ue^{-u/2}\left(  \pi\left(  e^{u}\right)  -li\left(  e^{u}\right)  \right)
&  \geq-ue^{-u/2}\sum_{\rho}li\left(  e^{u\rho}\right)  -1-\frac{2}{u}%
-\frac{8}{u^{2}}-\frac{58.56}{u^{3}}\nonumber\\
&  -ue^{-u/2}\log2-\frac{2ue^{-u/6}}{\log2}. \label{ineq03}%
\end{align}

\subsection{The function $li\left(  x \right)  $}

For $z=x+iy,y\neq0$ the logarithmic integral is defined (\cite{Ingham}, page
82) by%
\begin{equation}
li\left(  e^{z}\right)  =\int_{-\infty+iy}^{x+iy}\frac{e^{t}}{t}dt=e^{z}%
\int_{0}^{\infty}\frac{e^{-t}}{z-t}dt. \label{Complexli}%
\end{equation}
If $li\left(  e^{z}\right)  $ is defined for real $z=x$ as $\frac{1}{2}%
\lim_{y\rightarrow0}\left(  li\left(  e^{x+iy}\right)  +li\left(
e^{x-iy}\right)  \right)  $, it is easily verified that we recover the
classical definition of $li\left(  x\right)  $, where for $x>1$ it is
considered as the integral principal value%
\[
li\left(  x\right)  =\lim_{\varepsilon\rightarrow0}\left(  \int_{0}%
^{1-\varepsilon}\frac{dt}{\log t}+\int_{1+\varepsilon}^{x}\frac{dt}{\log
t}\right)  .
\]
Now, we keep to important facts in mind.

\noindent Firstly, from (\ref{Complexli}) it follows (\cite{Lebedev}, pages
30, 38-41) that $li\left(  e^{z}\right)  $ is an analytic function in every
sector $0<\delta\leq\left\vert \arg\left(  z\right)  \right\vert \leq
\pi-\delta<\pi$ in the complex plane.

\noindent Secondly, for $n\geq1,z\in\mathbb{C}$ with $\operatorname{Im}\left(
z\right)  \neq0$ we have%
\begin{equation}
li\left(  e^{z}\right)  =e^{z}\left(  \sum_{k=1}^{n}\frac{\left(  k-1\right)
!}{z^{k}}+n!\int_{0}^{\infty}\frac{e^{-t}}{\left(  z-t\right)  ^{n+1}%
}dt\right)  . \label{LiLehman}%
\end{equation}
This was proved by Lehman (\cite{Lehman}, formula 4.5) for the case $n=1$.
Successive integration by parts establishes the result.

\subsection{Further preliminaries}

Subsequently we denote by $K\left(  x\right)  =\sqrt{\frac{\alpha}{2\pi}%
}e^{-\frac{\alpha}{2}x^{2}}$ for $\alpha> 0$. Further denote by $\rho
=\beta+i\gamma$ the zeta zeros inside the critical strip.

\begin{lemma}
\label{Kxequation1} For $c \in\mathbb{R}$ we have
\[
\int_{-\infty}^{\infty}K\left(  x\right)  e^{icx}dx=e^{-\frac{c^{2}}{2\alpha}%
}.
\]

\end{lemma}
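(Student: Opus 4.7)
The plan is to recognize the integral as the Fourier transform of a Gaussian, which is again a Gaussian. I would begin by writing the integral explicitly as
\[
\int_{-\infty}^{\infty} K(x) e^{icx}\,dx = \sqrt{\frac{\alpha}{2\pi}} \int_{-\infty}^{\infty} e^{-\frac{\alpha}{2}x^{2} + icx}\,dx,
\]
and then complete the square in the exponent via
\[
-\frac{\alpha}{2}x^{2} + icx = -\frac{\alpha}{2}\left(x - \frac{ic}{\alpha}\right)^{2} - \frac{c^{2}}{2\alpha}.
\]
This isolates the factor $e^{-c^{2}/(2\alpha)}$, so everything reduces to showing that the shifted Gaussian integral along the real line agrees with the unshifted one.

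Next I would justify
\[
\int_{-\infty}^{\infty} e^{-\frac{\alpha}{2}(x - ic/\alpha)^{2}}\,dx = \int_{-\infty}^{\infty} e^{-\frac{\alpha}{2}x^{2}}\,dx = \sqrt{\frac{2\pi}{\alpha}}
\]
by a standard contour-shift argument. Since $e^{-\alpha z^{2}/2}$ is entire and decays like $e^{-\alpha R^{2}/2}$ uniformly in bounded imaginary strips as $|\operatorname{Re}(z)| = R \to \infty$, applying Cauchy's theorem on the rectangle with vertices $\pm R$ and $\pm R - ic/\alpha$ shows that the two vertical side contributions tend to zero. Hence the integral along the shifted horizontal line coincides with the real-axis integral, whose value is the classical Gaussian normalization. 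Multiplying by the prefactor $\sqrt{\alpha/(2\pi)}$ yields exactly $e^{-c^{2}/(2\alpha)}$.

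The only technical subtlety is the justification of the contour shift, which as sketched above is completely routine. A perhaps slicker alternative, avoiding complex analysis altogether, is to set $F(c) = \int_{-\infty}^{\infty} K(x) e^{icx}\,dx$, differentiate under the integral sign, and use $K'(x) = -\alpha x K(x)$ together with integration by parts to obtain
\[
F'(c) = i \int_{-\infty}^{\infty} x K(x) e^{icx}\,dx = -\frac{c}{\alpha} F(c),
\]
since the boundary terms vanish by the rapid decay of $K$. Combined with the normalization $F(0) = \int K(x)\,dx = 1$, this first-order ODE integrates to $F(c) = e^{-c^{2}/(2\alpha)}$, giving the claim by either route.
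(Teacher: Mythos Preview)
Your proof is correct. Both of your arguments --- completing the square followed by a contour shift, and the ODE method via differentiation under the integral sign --- are standard and valid derivations of the Fourier transform of a Gaussian.

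The paper takes a slightly different route: it observes that by even/odd symmetry the integral reduces to the real cosine integral $\sqrt{\alpha/(2\pi)}\int_{-\infty}^{\infty} e^{-\alpha x^{2}/2}\cos(cx)\,dx$ and then simply cites a tabulated formula (Gradshteyn--Ryzhik 3.896.2) for its value. Your approach is more self-contained, actually proving the identity rather than invoking a reference, at the cost of a few extra lines; the paper's approach is shorter but relies on an external source. Either is entirely adequate for such a classical fact.
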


\begin{proof}
\[
\int_{-\infty}^{\infty}K\left(  x\right)  e^{icx}dx=\sqrt{\frac{\alpha}{2\pi}%
}\int_{-\infty}^{\infty}e^{-\frac{\alpha}{2}\eta^{2}}\cos\left(  cx\right)
dx=e^{-\frac{c^{2}}{2\alpha}},
\]
where, in the last equation, we have used (\cite{Gradshteyn}, formula 3.896.2) with values $q^{2}=\alpha/2, p=c$ and $\lambda=0$.
\end{proof}

\begin{lemma}
\label{Kxequation2} For $\eta>0$ we have
\[
\int_{0}^{\eta}xK\left(  x\right)  dx=\frac{1}{\sqrt{2\pi\alpha}}\left(
1-e^{-\frac{\alpha}{2}\eta^{2}}\right)  .
\]

\end{lemma}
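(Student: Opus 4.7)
The plan is to prove this by a direct elementary computation, recognizing that $xK(x)$ is, up to a constant, the derivative of $-e^{-\frac{\alpha}{2}x^2}$. Specifically, I would observe that
\[
\frac{d}{dx}\left(-\frac{1}{\alpha}e^{-\frac{\alpha}{2}x^2}\right)=xe^{-\frac{\alpha}{2}x^2},
\]
so that $-\frac{1}{\alpha}\sqrt{\frac{\alpha}{2\pi}}\,e^{-\frac{\alpha}{2}x^2}=-\frac{1}{\sqrt{2\pi\alpha}}e^{-\frac{\alpha}{2}x^2}$ is an antiderivative of $xK(x)$.

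From here the remaining step is to apply the fundamental theorem of calculus between the limits $0$ and $\eta$, which yields
\[
\int_{0}^{\eta}xK(x)\,dx=\left[-\frac{1}{\sqrt{2\pi\alpha}}e^{-\frac{\alpha}{2}x^{2}}\right]_{0}^{\eta}=\frac{1}{\sqrt{2\pi\alpha}}\bigl(1-e^{-\frac{\alpha}{2}\eta^{2}}\bigr),
\]
as claimed. Equivalently, one could run the substitution $u=\frac{\alpha}{2}x^{2}$, $du=\alpha x\,dx$, which reduces the integral to $\sqrt{\frac{\alpha}{2\pi}}\cdot\frac{1}{\alpha}\int_{0}^{\alpha\eta^{2}/2}e^{-u}\,du$ and gives the same value. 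Since the integrand is elementary and the antiderivative is explicit, there is no genuine obstacle; the only point worth verifying is the arithmetic simplification $\frac{1}{\alpha}\sqrt{\frac{\alpha}{2\pi}}=\frac{1}{\sqrt{2\pi\alpha}}$, which follows at once.
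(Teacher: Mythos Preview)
Your proof is correct and follows essentially the same approach as the paper: the paper simply notes that $K'(x)=-\alpha xK(x)$ and says the result is then easy, which is exactly your observation that $-\frac{1}{\alpha}K(x)=-\frac{1}{\sqrt{2\pi\alpha}}e^{-\frac{\alpha}{2}x^{2}}$ is an antiderivative of $xK(x)$.
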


\begin{proof}
Using $K^{\prime}\left(  x\right)  =-\alpha xK\left(  x\right)$ the result
is easy to determine.
\end{proof}

\begin{lemma}
\label{Kxestimate2} Let $c>0$ and $h:\left[  c,\infty\right)  \rightarrow
\left(  0,\infty\right)  $ monotone decreasing. Then
\[
\int_{c}^{\infty}h\left(  x\right)  e^{-\frac{x^{2}}{2\alpha}}dx\leq
\frac{\alpha}{c}h\left(  c\right)  e^{-\frac{c^{2}}{2\alpha}}.
\]

\end{lemma}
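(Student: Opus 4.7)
The plan is to prove the bound by two elementary steps: first exploit the monotonicity of $h$ to pull it out of the integral, and then control the remaining Gaussian tail by a standard trick of inserting the factor $x/c \geq 1$.

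First, since $h$ is positive and monotone decreasing on $[c,\infty)$, we have $h(x) \leq h(c)$ for all $x \geq c$, so
\[
\int_{c}^{\infty} h(x) e^{-x^{2}/(2\alpha)}\, dx \;\leq\; h(c) \int_{c}^{\infty} e^{-x^{2}/(2\alpha)}\, dx.
\]
Next, for $x \geq c$ we have $1 \leq x/c$, which allows us to dominate the incomplete Gaussian integral by an expression with an explicit antiderivative:
\[
\int_{c}^{\infty} e^{-x^{2}/(2\alpha)}\, dx \;\leq\; \frac{1}{c} \int_{c}^{\infty} x\, e^{-x^{2}/(2\alpha)}\, dx.
\]
The last integral is evaluated directly, since the antiderivative of $x e^{-x^{2}/(2\alpha)}$ is $-\alpha e^{-x^{2}/(2\alpha)}$, yielding $\alpha e^{-c^{2}/(2\alpha)}$.

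Combining these two estimates gives the claimed inequality $\int_{c}^{\infty} h(x) e^{-x^{2}/(2\alpha)}\, dx \leq (\alpha / c)\, h(c)\, e^{-c^{2}/(2\alpha)}$. There is no genuine obstacle here: the only choice is whether to apply the monotonicity and the $x/c \geq 1$ trick in one combined step or sequentially, and the latter is slightly cleaner. Positivity of $h$ ensures no sign issues arise when replacing $h(x)$ by $h(c)$, and the convergence of all integrals is automatic from the Gaussian factor.
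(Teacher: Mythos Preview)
Your proof is correct. The paper itself does not give a proof but merely cites Lehman's original paper; your argument---pulling out $h(c)$ by monotonicity and then bounding the Gaussian tail via the standard $1\le x/c$ trick to obtain an explicit antiderivative---is exactly the classical derivation of this estimate and is essentially what one finds in Lehman.
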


\begin{proof}
Find a proof in (\cite{Lehman}, page 401, Lemma 4).
\end{proof}

\begin{lemma}
\label{Kxestimate3} Let $c,\eta>0$ \ Then%
\[
\left\vert \int_{\eta}^{\infty}K\left(  x\right)  e^{icx}dx\right\vert \leq
K\left(  \eta\right)  \min\left\{  \frac{2}{c},\frac{1}{\alpha\eta}\right\}
.
\]

\end{lemma}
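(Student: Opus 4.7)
The plan is to prove the two bounds in the minimum separately and then take whichever is smaller. Both estimates hinge on the closed-form identity
\[
\int_{\eta}^{\infty}\alpha x\,K(x)\,dx \;=\; \sqrt{\tfrac{\alpha}{2\pi}}\int_{\eta}^{\infty}\alpha x\,e^{-\alpha x^{2}/2}\,dx \;=\; K(\eta),
\]
which follows from $K'(x)=-\alpha x K(x)$ (the relation already used in Lemma \ref{Kxequation2}).

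For the bound $K(\eta)/(\alpha\eta)$, I would simply discard the phase by $|e^{icx}|=1$ and then exploit the monotonicity of $x$ on $[\eta,\infty)$ to insert the extra factor that converts $K(x)$ into an exact derivative. Explicitly, for $x\geq\eta$ we have $1\leq \alpha x/(\alpha\eta)$, so
\[
\left|\int_{\eta}^{\infty}K(x)e^{icx}\,dx\right|
\;\leq\; \int_{\eta}^{\infty}K(x)\,dx
\;\leq\; \frac{1}{\alpha\eta}\int_{\eta}^{\infty}\alpha x\,K(x)\,dx
\;=\; \frac{K(\eta)}{\alpha\eta}.
\]

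For the bound $2K(\eta)/c$ I would use integration by parts to exploit the oscillation of $e^{icx}$. Writing $e^{icx}=\tfrac{1}{ic}\tfrac{d}{dx}e^{icx}$ and using $K'(x)=-\alpha x K(x)$, we obtain
\[
\int_{\eta}^{\infty}K(x)e^{icx}\,dx
\;=\; -\frac{K(\eta)e^{ic\eta}}{ic}+\frac{1}{ic}\int_{\eta}^{\infty}\alpha x\,K(x)\,e^{icx}\,dx.
\]
The boundary term at infinity vanishes because $K$ decays super-exponentially. Taking absolute values, using $|e^{icx}|=1$, and invoking the same identity $\int_{\eta}^{\infty}\alpha x K(x)\,dx=K(\eta)$, yields
\[
\left|\int_{\eta}^{\infty}K(x)e^{icx}\,dx\right|
\;\leq\; \frac{K(\eta)}{c}+\frac{K(\eta)}{c}\;=\;\frac{2K(\eta)}{c}.
\]

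Combining the two estimates by taking the minimum gives the claim. There is no real obstacle here: the only subtle point is verifying that the boundary term at $x=\infty$ in the integration by parts drops out, which is immediate from the Gaussian decay of $K$. Everything else is a direct computation from $K'(x)=-\alpha x K(x)$ and the closed-form antiderivative $-K(x)$ for $\alpha x K(x)$.
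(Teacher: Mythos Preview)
Your proof is correct and follows essentially the same approach as the paper. For the $1/(\alpha\eta)$ bound you insert $x/\eta\ge 1$ directly, while the paper substitutes and invokes Lemma~\ref{Kxestimate2} with $h\equiv 1$---the same trick in different packaging; for the $2/c$ bound the paper simply cites Lehman, whose argument is precisely the integration by parts you wrote out.
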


\begin{proof}
The first inequality is proved in (\cite{Lehman}, page 403). For the second inequality we simply
obtain from the triangle inequality
\begin{align*}
\left\vert \int_{\eta}^{\infty}K\left(  x\right)  e^{icx}dx\right\vert  &
\leq\int_{\eta}^{\infty}K\left(  x\right)  dx\\
& =\frac{1}{\sqrt{2\pi\alpha}}\int_{\alpha\eta}^{\infty}e^{-\frac{x^{2}
}{2\alpha}}dx.
\end{align*}
Now, using Lemma \ref{Kxestimate2} together with $h\left( x\right) =1$, we obtain the desired inequality.
\end{proof}

\begin{lemma}
\label{Kxequation3} Let $c\in\mathbb{R}$ and $\eta>0$. Then
\[
\int_{-\eta}^{\eta}K\left(  x\right)  e^{icx}dx=e^{-\frac{c^{2}}{2\alpha}%
}-2\int_{\eta}^{\infty}K\left(  x\right)  \operatorname{Re}\left(
e^{icx}\right)  dx.
\]

\end{lemma}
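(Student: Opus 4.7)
The statement is really just an algebraic manipulation, combining Lemma \ref{Kxequation1} with the evenness of the kernel $K$. My plan is to split the integral of $K(x)e^{icx}$ over $\mathbb{R}$ into three pieces, evaluate the total via Lemma \ref{Kxequation1}, and reduce the two tails to a single real-part integral on $[\eta,\infty)$.

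More concretely, first I would write
\[
e^{-\frac{c^{2}}{2\alpha}}=\int_{-\infty}^{\infty}K(x)e^{icx}\,dx=\int_{-\infty}^{-\eta}K(x)e^{icx}\,dx+\int_{-\eta}^{\eta}K(x)e^{icx}\,dx+\int_{\eta}^{\infty}K(x)e^{icx}\,dx,
\]
using Lemma \ref{Kxequation1} for the left-hand side. Next I would handle the left tail by the substitution $x\mapsto -x$; since $K$ is an even function of $x$ (only $x^{2}$ appears in its definition), this gives
\[
\int_{-\infty}^{-\eta}K(x)e^{icx}\,dx=\int_{\eta}^{\infty}K(x)e^{-icx}\,dx.
\]

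Adding this to the right-tail integral produces
\[
\int_{\eta}^{\infty}K(x)\bigl(e^{icx}+e^{-icx}\bigr)\,dx=2\int_{\eta}^{\infty}K(x)\cos(cx)\,dx=2\int_{\eta}^{\infty}K(x)\operatorname{Re}\bigl(e^{icx}\bigr)\,dx,
\]
and solving the decomposition for the middle integral yields the claimed identity. There is no real obstacle here: the only thing to be careful about is invoking the evenness of $K$ cleanly and observing that $c$ is allowed to be any real number (in particular $c<0$ is fine, since $\cos$ is even and the identity is symmetric in the sign of $c$). Absolute convergence of all the integrals involved is immediate from the Gaussian factor, so the splitting and the change of variables are trivially justified.
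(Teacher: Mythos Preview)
Your proof is correct and matches the paper's approach: the paper simply states that the result follows ``by some standard arguments and using Lemma~\ref{Kxequation1},'' and your decomposition into the middle integral plus two tails combined via the evenness of $K$ is precisely that standard argument spelled out.
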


\begin{proof}
By some standard arguments and using Lemma \ref{Kxequation1} the result is easy established.
\end{proof}

\begin{lemma}
\label{backlundestimate} Let $2\pi e\leq T_{1}<T_{2}$ and $f:\left[
T_{1},T_{2}\right]  \rightarrow\left(  0,\infty\right)  $ monotone decreasing.
Then for some $\left\vert \delta\right\vert \leq1$ we have
\[
\sum_{T_{1} \leq\gamma\leq T_{2}}f\left(  \gamma\right)  =\frac{1}{2\pi}%
\int_{T_{1}}^{T_{2}}f\left(  x\right)  \log\frac{x}{2\pi}dx+\delta\left(
4f\left(  T_{1}\right)  \log T_{1}+2\int_{T_{1}}^{T_{2}}\frac{f\left(
x\right)  }{x}dx\right)  .
\]

\end{lemma}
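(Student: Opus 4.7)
The plan is to combine Abel (partial) summation with the explicit Riemann--von Mangoldt formula for the zero-counting function $N(T) = \#\{\rho : 0 < \gamma \leq T\}$. I write $N(T) = M(T) + E(T)$ with $M(T) = \tfrac{T}{2\pi}\log\tfrac{T}{2\pi e} + \tfrac{7}{8}$, so that $M'(T) = \tfrac{1}{2\pi}\log\tfrac{T}{2\pi}$ produces precisely the integrand appearing in the lemma. The essential analytic input is an effective Backlund-type bound $|E(T)| \leq 2\log T$ for $T \geq 2\pi e$, which I would quote rather than reprove (it is comfortably weaker than the explicit $0.137\log T + 0.443\log\log T + 1.588$ on this range).

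I express the sum as a Lebesgue--Stieltjes integral
\[
\sum_{T_1 \leq \gamma \leq T_2} f(\gamma) = \int_{T_1^{-}}^{T_2} f(t)\, dN(t),
\]
and split $dN = dM + dE$. The $dM$ part is absolutely continuous and yields exactly the main term $\frac{1}{2\pi}\int_{T_1}^{T_2} f(t)\log(t/2\pi)\, dt$. Integrating the $dE$ part by parts (which is legitimate because $f$ is monotone and hence of bounded variation) leaves the error
\[
\mathcal{E} = f(T_2)E(T_2) - f(T_1^{-})E(T_1^{-}) - \int_{T_1}^{T_2} E(t)\, df(t).
\]
The two endpoint contributions are controlled directly by $|E| \leq 2\log t$, giving $2 f(T_2)\log T_2 + 2 f(T_1)\log T_1$.

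For the remaining Stieltjes integral I majorise $|E(t)| \leq \phi(t) := 2\log t$ and use $-df \geq 0$ to bound it by $\int_{T_1}^{T_2} \phi(t)(-df(t))$. A second integration by parts, now in the other direction, gives
\[
\int_{T_1}^{T_2} \phi(t)(-df(t)) = f(T_1)\phi(T_1) - f(T_2)\phi(T_2) + \int_{T_1}^{T_2} f(t)\,\phi'(t)\, dt,
\]
with $\phi'(t) = 2/t$. The decisive observation is that the $-2f(T_2)\log T_2$ appearing at the upper boundary here exactly cancels the $+2f(T_2)\log T_2$ coming from the original endpoint term, so that when everything is collected one obtains
\[
|\mathcal{E}| \leq 4 f(T_1)\log T_1 + 2\int_{T_1}^{T_2} \frac{f(t)}{t}\, dt.
\]
Writing $\mathcal{E} = \delta\cdot\bigl(4 f(T_1)\log T_1 + 2\int_{T_1}^{T_2} f(t)/t\, dt\bigr)$ for some $|\delta| \leq 1$ yields the asserted equality.

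The real difficulty is not conceptual but the bookkeeping of the two successive integrations by parts so that the unwanted $f(T_2)\log T_2$ contributions cancel rather than accumulate; this is exactly what produces the clean form of the remainder with no endpoint dependence at $T_2$. The remaining ingredients — verifying Backlund's effective inequality on $[2\pi e,\infty)$ and interpreting $df$ as a Lebesgue--Stieltjes measure for monotone $f$ — are standard.
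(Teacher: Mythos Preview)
Your proposal is correct and follows essentially the same approach the paper indicates: the paper does not spell out a proof but refers to Lehman \cite{Lehman}, whose argument is precisely Abel/Stieltjes summation against $N(T)$ combined with Backlund's explicit bound on the remainder $E(T)=N(T)-M(T)$, exactly as you outline. Your key observation---that the second integration by parts produces a genuine equality whose negative boundary term $-2f(T_2)\log T_2$ legitimately cancels the corresponding endpoint contribution---is the standard mechanism that eliminates the $T_2$-dependence, and your quoted bound $|E(T)|\le 2\log T$ on $[2\pi e,\infty)$ is indeed a comfortable consequence of Backlund's explicit constants.
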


\begin{proof}
Lemma \ref{backlundestimate} is proved using the classical estimate on $N\left(  T\right)$, the
number of zeros for which $0<\gamma\leq T$, obtained by Backlund \cite{Backlund}. A full proof for Lemma \ref{backlundestimate} can be found in \cite{Lehman}.
\end{proof}

\begin{lemma}
\label{gammaestimate} We have

\noindent(a) Let $T\geq2\pi e$ and $n\geq2$. Then%
\[
\sum_{T<\gamma}\frac{1}{\gamma^{n}}<T^{1-n}\log T.
\]
(b)%
\begin{align*}
\sum_{0<\gamma}\frac{1}{\gamma^{2}}  &  <2.31050\times10^{-2},\\
\sum_{0<\gamma}\frac{1}{\gamma^{3}}  &  <7.29549\times10^{-4}.
\end{align*}

\end{lemma}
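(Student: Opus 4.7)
The plan for part (a) is to apply Lemma \ref{backlundestimate} directly to the monotone decreasing function $f(x) = x^{-n}$. Take $T_1 = T$ and let $T_2 \to \infty$; since $n \geq 2$, the boundary contribution $f(T_2)\log T_2$ vanishes. This reduces everything to two explicit integrals, $\int_T^\infty x^{-n}\log(x/2\pi)\,dx$ (handled by a single integration by parts, with $u = \log(x/2\pi)$ and $dv = x^{-n}\,dx$) and $\int_T^\infty x^{-n-1}\,dx = T^{-n}/n$. Evaluating yields the explicit bound
$$\sum_{T < \gamma}\frac{1}{\gamma^n} \leq \frac{T^{1-n}}{2\pi(n-1)}\left(\log\frac{T}{2\pi} + \frac{1}{n-1}\right) + \frac{4\log T}{T^n} + \frac{2}{nT^n}.$$

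To finish (a) I would divide both sides by $T^{1-n}$ and compare with $\log T$. Because $n - 1 \geq 1$, the first summand is at most $(\log T + 1)/(2\pi)$, while the remaining two pick up an extra factor $1/T$; at $T \geq 2\pi e$ these contribute only a small constant fraction of $\log T$, so the claimed strict inequality follows after a short numerical check. For part (b) the plan is to split each sum at an explicit cutoff height $T_0$,
$$\sum_{0 < \gamma}\frac{1}{\gamma^n} = \sum_{0 < \gamma \leq T_0}\frac{1}{\gamma^n} + \sum_{\gamma > T_0}\frac{1}{\gamma^n},$$
evaluate the head numerically from a high-precision table of the low-lying nontrivial zeros of $\zeta$ (available to many digits through the work of Odlyzko and others), and bound the tail via part (a) by $T_0^{1-n}\log T_0$. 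For $n = 2,3$ the tail decays fast enough that a modestly large $T_0$ (a few thousand) pushes it several orders of magnitude below the last digit of the quoted values $2.31050\times 10^{-2}$ and $7.29549\times 10^{-4}$.

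There is no real conceptual obstacle here; the only delicacy is careful bookkeeping of the numerical constants. In (a) the tightest case is $n = 2$ at $T = 2\pi e$, where the coefficient in front of $\log T$ is largest relative to the main term, and a direct substitution is needed to confirm the margin. In (b) one must choose $T_0$ large enough that the tail estimate from (a), combined with the rounding of the truncated head sum, still leaves a strict inequality in the fifth decimal place. Both of these reduce to routine numerical verifications rather than any new analytic input.
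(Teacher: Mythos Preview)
Your treatment of part (a) is correct and is precisely the route the paper indicates: apply Lemma~\ref{backlundestimate} to $f(x)=x^{-n}$, send $T_2\to\infty$, and compare the resulting explicit expression with $T^{1-n}\log T$. The paper itself does not spell this out but simply cites Lehman's original argument, which is the same computation.

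For part (b) your head-plus-tail strategy is sound in principle, but the quantitative claim does not hold up. The constants in the lemma are extremely tight: the true value of $\sum_{\gamma>0}\gamma^{-2}$ is $0.02310499\ldots$, so the slack in the stated bound $2.31050\times10^{-2}$ is only about $7\times10^{-9}$. Controlling the tail by part (a), namely by $T_0^{-1}\log T_0$, then forces $T_0$ to be of order $10^{9}$--$10^{10}$; even your sharper intermediate estimate $\tfrac{1}{2\pi}T_0^{-1}(\log(T_0/2\pi)+1)+\cdots$ still needs $T_0$ around $10^{9}$, corresponding to several billion zeros. A cutoff of ``a few thousand'' misses by roughly six orders of magnitude, so this step as written fails. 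The paper avoids the issue entirely by citing \cite{SaouterandDemichelandTrudgian}, Lemma~2.9, where these constants are obtained by other means (ultimately via closed-form identities for $\sum_\rho\rho^{-k}$ in terms of logarithmic derivatives of $\zeta$ at $s=1$, not by brute truncation). If you wish to keep your argument self-contained, you must either commit to a vastly larger $T_0$ than stated or replace the tail bound from (a) by something substantially sharper.
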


\begin{proof}
The first assertion is proved via Lemma \ref{backlundestimate} and the proof can also be found in (\cite{Lehman}, Lemma 2). For a proof of the second assertion we refer to (\cite{SaouterandDemichelandTrudgian}, Lemma 2.9)
\end{proof}

\begin{lemma}
\label{gammafintesumestimate} If $T\geq2\pi e$, then for some $\left\vert
\delta\right\vert \leq1$ we have%
\[
\sum_{0<\gamma\leq T}\frac{1}{\gamma}=\frac{1}{4\pi}\log^{2}\frac{T}{2\pi
}+0.9321\delta.
\]

\end{lemma}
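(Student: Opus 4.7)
The plan is to apply Lemma~\ref{backlundestimate} with $f(x)=1/x$, $T_{1}=2\pi e$, and $T_{2}=T$, and then to add separately the contribution of the zeta zeros lying in the interval $(0,2\pi e)$. Since $2\pi e\approx 17.08$, the smallest imaginary part of a nontrivial zero is $\gamma_{1}\approx 14.1347$, and the next one already exceeds $21$, so exactly one low zero (namely $\gamma_{1}$) contributes to this correction.

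First I would evaluate the main integral appearing in Lemma~\ref{backlundestimate}. The substitution $u=\log(x/2\pi)$ produces
\[
\frac{1}{2\pi}\int_{2\pi e}^{T}\frac{\log(x/2\pi)}{x}\,dx=\frac{1}{4\pi}\log^{2}\frac{T}{2\pi}-\frac{1}{4\pi},
\]
which already furnishes the announced leading term together with an absolute constant $-1/(4\pi)$. Next I would bound the Backlund remainder $4f(T_{1})\log T_{1}+2\int_{T_{1}}^{T_{2}}f(x)/x\,dx$ by
\[
\frac{4\log(2\pi e)}{2\pi e}+2\Bigl(\frac{1}{2\pi e}-\frac{1}{T}\Bigr)\leq\frac{3+2\log 2\pi}{\pi e}.
\]
Combining this with the single contribution $1/\gamma_{1}$ from the low zero then gives
\[
\sum_{0<\gamma\leq T}\frac{1}{\gamma}=\frac{1}{4\pi}\log^{2}\frac{T}{2\pi}+\Bigl(\frac{1}{\gamma_{1}}-\frac{1}{4\pi}\Bigr)+\delta'\,\frac{3+2\log 2\pi}{\pi e}
\]
for some $|\delta'|\leq 1$.

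The final step is the numerical verification
\[
\Bigl|\frac{1}{\gamma_{1}}-\frac{1}{4\pi}\Bigr|+\frac{3+2\log 2\pi}{\pi e}\leq 0.9321,
\]
after which the two error pieces can be absorbed into a single term $0.9321\,\delta$ with $|\delta|\leq 1$, as claimed. I expect no essential obstacle: the only slightly delicate step is this numerical bound (which is comfortable, since the left-hand side evaluates to about $0.79$), and the boundary case $T=2\pi e$ not covered directly by Lemma~\ref{backlundestimate} is handled trivially because the two sides of the claim then differ only by the harmless constant $1/\gamma_{1}-1/(4\pi)$, well within the allowed tolerance.
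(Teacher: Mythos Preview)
Your argument is correct. The paper does not actually prove this lemma but simply refers to \cite{SaouterandDemichelandTrudgian}, Lemma~2.10; your derivation via Lemma~\ref{backlundestimate} with $f(x)=1/x$, $T_{1}=2\pi e$, together with the explicit treatment of the single zero $\gamma_{1}<2\pi e$, is precisely the natural route and yields the stated constant with room to spare.
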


\begin{proof}
The proof can be found in (\cite{SaouterandDemichelandTrudgian}, Lemma 2.10).
\end{proof}

\section{The Error terms}

\label{ErrorTerms}

As before, we denote by $A$ a positive number such that $\beta=1/2$ for all
zeros $\rho=\beta+i\gamma$ of $\zeta\left(  s\right)  $ for which
$0<\gamma\leq A$. For height greater than $A$ we only assume $0<\beta<1$ in
case the Riemann hypothesis is not true in general. We denote by%
\[
I\left(  \omega,\eta\right)  =\int_{\omega-\eta}^{\omega+\eta}K\left(
u-\omega\right)  ue^{-u/2}\left(  \pi\left(  e^{u}\right)  -li\left(
e^{u}\right)  \right)  du.
\]
Then, for $\omega-\eta\geq44.22$ we obtain from (\ref{ineq03}) and
(\ref{LiLehman}) together with $n=2$ the expression%
\begin{align}
I\left(  \omega,\eta\right)   &  \geq-\int_{\omega-\eta}^{\omega+\eta}K\left(
u-\omega\right)  du\nonumber\\
&  -\int_{\omega-\eta}^{\omega+\eta}K\left(  u-\omega\right)  \left(  \frac
{2}{u}+\frac{8}{u^{2}}+\frac{58.56}{u^{3}}+\log2ue^{-\frac{u}{2}}+\frac
{2}{\log2}ue^{-\frac{u}{6}}\right)  du\nonumber\\
&  -\sum_{\substack{\rho\\\left\vert \gamma\right\vert \leq A}}\frac{1}{\rho
}\int_{\omega-\eta}^{\omega+\eta}e^{u\left(  \rho-\frac{1}{2}\right)
}K\left(  u-\omega\right)  du\nonumber\\
&  -\sum_{\substack{\rho\\\left\vert \gamma\right\vert \leq A}}\frac{1}%
{\rho^{2}}\int_{\omega-\eta}^{\omega+\eta}\frac{1}{u}e^{u\left(  \rho-\frac
{1}{2}\right)  }K\left(  u-\omega\right)  du\label{Endformula}\\
&  -\sum_{\substack{\rho\\\left\vert \gamma\right\vert \leq A}}\int
_{\omega-\eta}^{\omega+\eta}2ue^{u\left(  \rho-\frac{1}{2}\right)  }K\left(
u-\omega\right)  \int_{0}^{\infty}\frac{e^{-t}}{\left(  u\rho-t\right)  ^{3}%
}dtdu\nonumber\\
&  -\sum_{\substack{\rho\\\left\vert \gamma\right\vert >A}}\int_{\omega-\eta
}^{\omega+\eta}K\left(  u-\omega\right)  ue^{-\frac{u}{2}}li\left(  e^{u\rho
}\right)  du,\nonumber
\end{align}
where the interchanging of summation and integration is justified by Arzela's
bounded convergence theorem because the series in (\ref{ineq03}) meets its
requirements on every relevant interval $\left[  \omega-\eta,\omega
+\eta\right]  $.

\subsection{The $R_{1}$ to $R_{5}$ terms}

We begin with the $R_{1}$ term.
\begin{align}
&  \int_{\omega-\eta}^{\omega+\eta}K\left(  u-\omega\right)  \left(  \frac
{2}{u}+\frac{8}{u^{2}}+\frac{58.56}{u^{3}}+\log2ue^{-\frac{u}{2}}+\frac
{2}{\log2}ue^{-\frac{u}{6}}\right)  du\nonumber\\
&  \leq\left(  \frac{2}{\omega-\eta}+\frac{8}{\left(  \omega-\eta\right)
^{2}}+\frac{58.56}{\left(  \omega-\eta\right)  ^{3}}+\log2\left(  \omega
+\eta\right)  e^{-\frac{\omega-\eta}{2}}+\frac{2}{\log2}\left(  \omega
+\eta\right)  e^{-\frac{\omega-\eta}{6}}\right) \nonumber\\
&  \cdot\int_{\omega-\eta}^{\omega+\eta}K\left(  u-\omega\right)
du\nonumber\\
&  \leq\frac{2}{\omega-\eta}+\frac{8}{\left(  \omega-\eta\right)  ^{2}}%
+\frac{58.56}{\left(  \omega-\eta\right)  ^{3}}+\log2\left(  \omega
+\eta\right)  e^{-\frac{\omega-\eta}{2}}+\frac{2}{\log2}\left(  \omega
+\eta\right)  e^{-\frac{\omega-\eta}{6}}\nonumber\\
&  =R_{1}. \label{R1term}%
\end{align}

\noindent Next, before turning to the $R_{2}$ term, we first study the $R_{3}
$ term which comes from formula%
\[
\sum_{\substack{\rho\\\left\vert \gamma\right\vert \leq A}}\frac{1}{\rho}%
\int_{\omega-\eta}^{\omega+\eta}e^{u\left(  \rho-\frac{1}{2}\right)  }K\left(
u-\omega\right)  du.
\]
In this situation we have $\rho=1/2+i\gamma$ and by using Lemma
\ref{Kxequation3} we get
\begin{align*}
&  \sum_{\substack{\rho\\\left\vert \gamma\right\vert \leq A}}\frac{1}{\rho
}\int_{\omega-\eta}^{\omega+\eta}e^{u\left(  \rho-\frac{1}{2}\right)
}K\left(  u-\omega\right)  du\\
&  =\sum_{\substack{\rho\\\left\vert \gamma\right\vert \leq A}}\frac{1}{\rho
}\int_{\omega-\eta}^{\omega+\eta}e^{iu\gamma}K\left(  u-\omega\right)  du\\
&  =\sum_{\substack{\rho\\\left\vert \gamma\right\vert \leq A}}\frac
{e^{i\omega\gamma}}{\rho}e^{-\frac{\gamma^{2}}{2\alpha}}-2\sum_{\substack{\rho
\\\left\vert \gamma\right\vert \leq A}}\frac{e^{i\omega\gamma}}{\rho}%
\int_{\eta}^{\infty}K\left(  x\right)  \operatorname{Re}\left(  e^{ix\gamma
}\right)  dx\\
&  \leq\sum_{\substack{\rho\\\left\vert \gamma\right\vert \leq A}%
}\frac{e^{i\omega\gamma}}{\rho}e^{-\frac{\gamma^{2}}{2\alpha}}+4\sum
_{\substack{\rho\\0<\gamma\leq A}}\left\vert \frac{e^{i\omega\gamma}}{\rho
}\right\vert \left\vert \int_{\eta}^{\infty}K\left(  x\right)
\operatorname{Re}\left(  e^{ix\gamma}\right)  dx\right\vert .
\end{align*}
Now, using Lemma \ref{Kxestimate3} together with Lemma \ref{gammaestimate}b we
get
\begin{align*}
&  \sum_{\substack{\rho\\\left\vert \gamma\right\vert \leq A}}\frac
{e^{i\omega\gamma}}{\rho}e^{-\frac{\gamma^{2}}{2\alpha}}+4\sum_{\substack{\rho
\\0<\gamma\leq A}}\left\vert \frac{e^{i\omega\gamma}}{\rho}\right\vert
\left\vert \int_{\eta}^{\infty}K\left(  x\right)  \operatorname{Re}\left(
e^{ix\gamma}\right)  dx\right\vert \\
&  \leq\sum_{\substack{\rho\\\left\vert \gamma\right\vert \leq A}%
}\frac{e^{i\omega\gamma}}{\rho}e^{-\frac{\gamma^{2}}{2\alpha}}+8K\left(
\eta\right)  \sum_{0<\gamma}\frac{1}{\gamma^{2}}\\
&  \leq\sum_{\substack{\rho\\\left\vert \gamma\right\vert \leq A}%
}\frac{e^{i\omega\gamma}}{\rho}e^{-\frac{\gamma^{2}}{2\alpha}}+0.074\sqrt
{\alpha}e^{-\frac{\alpha}{2}\eta^{2}}.
\end{align*}
We collect%
\begin{equation}
\sum_{\substack{\rho\\\left\vert \gamma\right\vert \leq A}}\frac{1}{\rho}%
\int_{\omega-\eta}^{\omega+\eta}e^{u\left(  \rho-\frac{1}{2}\right)  }K\left(
u-\omega\right)  du\leq\sum_{\substack{\rho\\\left\vert \gamma\right\vert \leq
A}}\frac{e^{i\omega\gamma}}{\rho}e^{-\frac{\gamma^{2}}{2\alpha}}+R_{3}.
\label{R3term}%
\end{equation}

\noindent Now, we proceed further with the $R_{2}$ term which comes from
\[
\sum_{\substack{\rho\\\left\vert \gamma\right\vert \leq A}}\frac{1}{\rho^{2}%
}\int_{\omega-\eta}^{\omega+\eta}\frac{1}{u}e^{u\left(  \rho-\frac{1}%
{2}\right)  }K\left(  u-\omega\right)  du.
\]
Again, we have $\rho=1/2+i\gamma$ and by using Lemma \ref{Kxequation3}
together with some standard arguments we calculate%
\begin{align*}
&  \sum_{\substack{\rho\\\left\vert \gamma\right\vert \leq A}}\frac{1}%
{\rho^{2}}\int_{\omega-\eta}^{\omega+\eta}\frac{1}{u}e^{u\left(  \rho-\frac
{1}{2}\right)  }K\left(  u-\omega\right)  du\\
&  =\sum_{\substack{\rho\\\left\vert \gamma\right\vert \leq A}}\frac
{e^{i\omega\gamma}}{\omega\rho^{2}}\int_{-\eta}^{\eta}\left(  1-\frac
{x}{\omega+x}\right)  K\left(  x\right)  e^{ix\gamma}dx\\
&  =\sum_{\substack{\rho\\\left\vert \gamma\right\vert \leq A}}\frac
{e^{i\omega\gamma}}{\omega\rho^{2}}e^{-\frac{\gamma^{2}}{2\alpha}}%
-2\sum_{\substack{\rho\\\left\vert \gamma\right\vert \leq A}}\frac
{e^{i\omega\gamma}}{\omega\rho^{2}}\int_{\eta}^{\infty}K\left(  x\right)
\operatorname{Re}\left(  e^{ix\gamma}\right)  dx\\
&  -\sum_{\substack{\rho\\\left\vert \gamma\right\vert \leq A}}\frac
{e^{i\omega\gamma}}{\omega\rho^{2}}\int_{-\eta}^{\eta}\frac{1}{\omega
+x}xK\left(  x\right)  e^{ix\gamma}dx\\
&  \leq\sum_{\substack{\rho\\\left\vert \gamma\right\vert \leq A}%
}\frac{e^{i\omega\gamma}}{\omega\rho^{2}}e^{-\frac{\gamma^{2}}{2\alpha}}%
+4\sum_{\substack{\rho\\0<\gamma\leq A}}\left\vert \frac{e^{i\omega\gamma}%
}{\omega\rho^{2}}\right\vert \left\vert \int_{\eta}^{\infty}K\left(  x\right)
e^{ix\gamma}dx\right\vert \\
&  +2\sum_{\substack{\rho\\0<\gamma\leq A}}\frac{1}{\omega\gamma^{2}}%
\int_{-\eta}^{\eta}\frac{1}{\omega+x}\left\vert x\right\vert K\left(
x\right)  dx.
\end{align*}
Together with Lemma \ref{Kxestimate3} and Lemma \ref{Kxequation2} we estimate
\begin{align*}
&  \sum_{\substack{\rho\\\left\vert \gamma\right\vert \leq A}}\frac{1}%
{\rho^{2}}\int_{\omega-\eta}^{\omega+\eta}\frac{1}{u}e^{u\left(  \rho-\frac
{1}{2}\right)  }K\left(  u-\omega\right)  du\\
&  \leq\sum_{\substack{\rho\\\left\vert \gamma\right\vert \leq A}%
}\frac{e^{i\omega\gamma}}{\omega\rho^{2}}e^{-\frac{\gamma^{2}}{2\alpha}}%
+\frac{\sqrt{\alpha}}{\omega}e^{-\frac{\alpha}{2}\eta^{2}}\sqrt{\frac{8}{\pi}%
}\min\left\{  2\sum_{0<\gamma}\frac{1}{\gamma^{3}},\frac{1}{\alpha\eta}%
\sum_{0<\gamma}\frac{1}{\gamma^{2}}\right\} \\
&  +\sqrt{\frac{8}{\pi}}\sum_{0<\gamma}\frac{1}{\gamma^{2}}\frac{1}{\omega
}\frac{1}{\omega-\eta}\frac{1}{\sqrt{\alpha}}\left(  1-e^{-\frac{\alpha}%
{2}\eta^{2}}\right)  .
\end{align*}
Collecting the numerical values in Lemma \ref{gammaestimate}b we finally
arrive at%
\begin{align}
&  \sum_{\substack{\rho\\\left\vert \gamma\right\vert \leq A}}\frac{1}%
{\rho^{2}}\int_{\omega-\eta}^{\omega+\eta}\frac{1}{u}e^{u\left(  \rho-\frac
{1}{2}\right)  }K\left(  u-\omega\right)  du\nonumber\\
&  \leq\sum_{\substack{\rho\\\left\vert \gamma\right\vert \leq A}%
}\frac{e^{i\omega\gamma}}{\omega\rho^{2}}e^{-\frac{\gamma^{2}}{2\alpha}}%
+\frac{0.037}{\omega\sqrt{\alpha}}\left(  \min\left\{  0.082\alpha,\frac
{1}{\eta}\right\}  e^{-\frac{\alpha}{2}\eta^{2}}+\frac{1-e^{-\frac{\alpha}%
{2}\eta^{2}}}{\omega-\eta}\right) \nonumber\\
&  =\sum_{\substack{\rho\\\left\vert \gamma\right\vert \leq A}}\frac
{e^{i\omega\gamma}}{\omega\rho^{2}}e^{-\frac{\gamma^{2}}{2\alpha}}+R_{2}.
\label{R2term}%
\end{align}

\noindent We proceed further with the $R_{4}$ term which comes from an
additional truncation at height $T$ below the possible maximum at height $A$
in the summation for $I\left(  \omega,\eta\right)  $ in Theorem
\ref{theorem31}. We collect the sums from (\ref{R3term}) and (\ref{R2term})
and we take the summation only up to height $T$, that is
\[
\sum_{\substack{\rho\\0<\left\vert \gamma\right\vert \leq T}}e^{i\omega\gamma
}\left(  \frac{1}{\rho}+\frac{1}{\omega\rho^{2}}\right)  e^{-\gamma
^{2}/2\alpha},
\]
for which we have to accept a further additional error term. Thus we split the
overall sum
\[
\sum_{\substack{\rho\\0<\left\vert \gamma\right\vert \leq A}}=\sum
_{\substack{\rho\\0<\left\vert \gamma\right\vert \leq T}}+\sum_{\substack{\rho
\\T<\left\vert \gamma\right\vert \leq A}}.
\]
Then by some simple manipulations we estimate
\begin{align*}
&  \sum_{\substack{\rho\\T<\left\vert \gamma\right\vert \leq A}}e^{i\omega
\gamma}\left(  \frac{1}{\rho}+\frac{1}{\omega\rho^{2}}\right)  e^{-\gamma
^{2}/2\alpha}\\
&  \leq2\sum_{T<\gamma\leq A}\left(  \frac{1}{\gamma}+\frac{1}{\omega
\gamma^{2}}\right)  e^{-\gamma^{2}/2\alpha}.
\end{align*}
Now, for $2\pi e\leq T$ we use Lemma \ref{backlundestimate} together with the
function $f\left(  x\right)  =\left(  \frac{2}{x}+\frac{2}{wx^{2}}\right)
e^{-x^{2}/2\alpha}$ from which we deduce that for some $\left\vert
\delta\right\vert \leq1$,
\begin{align*}
&  2\sum_{T<\gamma\leq A}\left(  \frac{1}{\gamma}+\frac{1}{\omega\gamma^{2}%
}\right)  e^{-\gamma^{2}/2\alpha}\\
&  \leq\int_{T}^{\infty}\left(  \frac{1}{\pi x}+\frac{1}{\pi\omega x^{2}%
}\right)  e^{-\frac{x^{2}}{2\alpha}}\log\frac{x}{2\pi}dx\\
&  +\delta\left(  \left(  \frac{8}{T}+\frac{8}{\omega T^{2}}\right)
e^{-\frac{T^{2}}{2\alpha}}\log T+4\int_{T}^{\infty}\left(  \frac{1}{x^{2}%
}+\frac{1}{\omega x^{3}}\right)  e^{-\frac{x^{2}}{2\alpha}}dx\right)  .
\end{align*}
Now we set
\begin{align*}
h_{1}\left(  x\right)   &  =\left(  \frac{1}{\pi x}+\frac{1}{\pi\omega x^{2}%
}\right)  \log\frac{x}{2\pi},\\
h_{2}\left(  x\right)   &  =\frac{1}{x^{2}}+\frac{1}{\omega x^{3}}%
\end{align*}
then, by combining together with Lemma \ref{Kxestimate2}, we finally arrive
at
\begin{align*}
&  2\sum_{T<\gamma\leq A}\left(  \frac{1}{\gamma}+\frac{1}{\omega\gamma^{2}%
}\right)  e^{-\gamma^{2}/2\alpha}\\
&  \leq e^{-\frac{T^{2}}{2\alpha}}\left(  \frac{\alpha}{\pi T^{2}}\log\frac
{T}{2\pi}+\frac{8\log T}{T}+\frac{4\alpha}{T^{3}}\right)  \left(  1+\frac
{1}{\omega T}\right) \\
&  =R_{4}.
\end{align*}
We collect
\begin{align}
&  \sum_{\substack{\rho\\0<\left\vert \gamma\right\vert \leq A}}e^{i\omega
\gamma}\left(  \frac{1}{\rho}+\frac{1}{\omega\rho^{2}}\right)  e^{-\gamma
^{2}/2\alpha}\nonumber\\
&  \leq\sum_{\substack{\rho\\0<\left\vert \gamma\right\vert \leq T}%
}e^{i\omega\gamma}\left(  \frac{1}{\rho}+\frac{1}{\omega\rho^{2}}\right)
e^{-\gamma^{2}/2\alpha}+R_{4}. \label{R4term}%
\end{align}
The estimate for the $R_{5}$ term is done by some simple computations. For the
last time we can take use of $\rho=1/2+i\gamma$ and by the triangle inequality
we estimate%
\begin{align}
&  \sum_{\substack{\rho\\\left\vert \gamma\right\vert \leq A}}\int
_{\omega-\eta}^{\omega+\eta}2ue^{u\left(  \rho-\frac{1}{2}\right)  }K\left(
u-\omega\right)  \int_{0}^{\infty}\frac{e^{-t}}{\left(  u\rho-t\right)  ^{3}%
}dtdu\nonumber\\
&  =\sum_{\substack{\rho\\\left\vert \gamma\right\vert \leq A}}\int
_{\omega-\eta}^{\omega+\eta}2ue^{iu\gamma}K\left(  u-\omega\right)  \int
_{0}^{\infty}\frac{e^{-t}}{\left(  u\rho-t\right)  ^{3}}dtdu\nonumber\\
&  \leq\sum_{\substack{\rho\\\left\vert \gamma\right\vert \leq A}}\int
_{\omega-\eta}^{\omega+\eta}2uK\left(  u-\omega\right)  \int_{0}^{\infty}%
\frac{e^{-t}}{\left\vert u\rho-t\right\vert ^{3}}dtdu\nonumber\\
&  \leq\sum_{\substack{\rho\\\left\vert \gamma\right\vert \leq A}}\int
_{\omega-\eta}^{\omega+\eta}2uK\left(  u-\omega\right)  \int_{0}^{\infty}%
\frac{e^{-t}}{u^{3}\left\vert \gamma\right\vert ^{3}}dtdu\nonumber\\
&  \leq\frac{4}{\left(  \omega-\eta\right)  ^{2}}\sum_{0<\gamma}\frac
{1}{\gamma^{3}}\int_{-\eta}^{\eta}K\left(  x\right)  dx\nonumber\\
&  \leq\frac{0.003}{\left(  \omega-\eta\right)  ^{2}}\nonumber\\
&  =R_{5}, \label{R5term}%
\end{align}
where we have again used Lemma \ref{gammaestimate}b.

\subsection{The final $R_{6}$ term}

The major part consists now in a careful estimate for the last remaining term
from (\ref{Endformula}), that is
\[
\sum_{\substack{\rho\\\left\vert \gamma\right\vert >A}}\int_{\omega-\eta
}^{\omega+\eta}K\left(  u-\omega\right)  ue^{-\frac{u}{2}}li\left(  e^{u\rho
}\right)  du.
\]
The proof for the estimate starts with a slightly different setting on the
analytic region, but follows Lehman's principal idea. Later on we deviate from
Lehman to improve the estimates. We introduce the sector region
\[
D=\left\{  z\in\mathbb{C}:-\frac{3}{10}\pi\leq\arg\left(  z\right)  \leq
\frac{3}{10}\pi\right\}
\]
together with the function $f:D\rightarrow\mathbb{C}$ by defining
\[
f_{\rho}\left(  z\right)  =\rho ze^{-\rho z}li\left(  e^{\rho z}\right)
e^{-\frac{\alpha}{2}\left(  z-\omega\right)  ^{2}}.
\]
We may write%
\begin{align*}
&  \int_{\omega-\eta}^{\omega+\eta}K\left(  u-\omega\right)  ue^{-\frac{u}{2}%
}li\left(  e^{u\rho}\right)  du\\
&  =\sqrt{\frac{\alpha}{2\pi}}\frac{1}{\rho}\int_{\omega-\eta}^{\omega+\eta
}e^{u\left(  \rho-\frac{1}{2}\right)  }f_{\rho}\left(  u\right)  du.
\end{align*}
Now, following Lehman's idea, we perform successive partial integration and,
for a positive integer $N$ which we fix later, we get
\begin{align*}
&  \int_{\omega-\eta}^{\omega+\eta}e^{u\left(  \rho-\frac{1}{2}\right)
}f_{\rho}\left(  u\right)  du\\
&  =\sum_{n=0}^{N-1}\left(  -1\right)  ^{n}\frac{e^{\omega\left(  \rho
-\frac{1}{2}\right)  }}{\left(  \rho-\frac{1}{2}\right)  ^{n+1}}\left(
e^{\eta\left(  \rho-\frac{1}{2}\right)  }f_{\rho}^{\left(  n\right)  }\left(
\omega+\eta\right)  -e^{-\eta\left(  \rho-\frac{1}{2}\right)  }f_{\rho
}^{\left(  n\right)  }\left(  \omega-\eta\right)  \right) \\
&  +\frac{\left(  -1\right)  ^{N}}{\left(  \rho-\frac{1}{2}\right)  ^{N}}%
\int_{\omega-\eta}^{\omega+\eta}e^{u\left(  \rho-\frac{1}{2}\right)  }f_{\rho
}^{\left(  N\right)  }\left(  u\right)  du.
\end{align*}
From this we obtain for all $N\geq1$ the estimate
\begin{align}
&  \left\vert \int_{\omega-\eta}^{\omega+\eta}e^{u\left(  \rho-\frac{1}%
{2}\right)  }f_{\rho}\left(  u\right)  du\right\vert \nonumber\\
&  \leq e^{\frac{\omega+\eta}{2}}\sum_{n=0}^{N-1}\frac{\left\vert f_{\rho
}^{\left(  n\right)  }\left(  \omega+\eta\right)  \right\vert +\left\vert
f_{\rho}^{\left(  n\right)  }\left(  \omega-\eta\right)  \right\vert
}{\left\vert \gamma\right\vert ^{n+1}}\nonumber\\
&  +e^{\frac{\omega+\eta}{2}}\frac{1}{\left\vert \gamma\right\vert ^{N}}%
\int_{\omega-\eta}^{\omega+\eta}\left\vert f_{\rho}^{\left(  N\right)
}\left(  u\right)  \right\vert du. \label{Partialintegrationestimate}%
\end{align}
For all $z\in D$ and further, since $\left\vert \operatorname{Im}\left(
\rho\right)  \right\vert >14.1$ for all non trivial zeta zeros, we have%
\begin{align*}
0.4774\pi &  <\left\vert \arg\left(  \rho\right)  \right\vert <\frac{\pi}%
{2},\\
0  &  \leq\left\vert \arg\left(  z\right)  \right\vert \leq\frac{3}{10}\pi.
\end{align*}
Thus%
\[
\frac{\pi}{6}<0.1774\pi\leq\left\vert \arg\left(  \rho z\right)  \right\vert
\leq\frac{4}{5}\pi<\frac{5}{6}\pi,
\]
from which, by a previous remark, we obtain that $f_{\rho}$ is an analytic
function in the sector plane $D$. Now let%
\begin{align}
0  &  <\eta<\frac{\omega}{100},\label{firstrestriction}\\
\frac{5A}{4\omega}  &  \leq\alpha\leq A^{2}. \label{secondrestriction}%
\end{align}
If we select a value $x\in\left[  \omega-\eta,\omega+\eta\right]  $ and take a
circle contour $c\left(  t\right)  =x+re^{it}$ for $t\in\left[  0,2\pi\right]
$ then with a radius $r\leq\frac{4}{5}\omega$ we stay always inside the
analytic region $D$, since by (\ref{firstrestriction}) we have
\begin{align*}
r  &  \leq\frac{4}{5}\omega<\omega\left(  1-\frac{1}{100}\right)  \sin\frac
{3}{10}\pi\\
&  \leq\left(  \omega-\eta\right)  \sin\frac{3}{10}\pi,
\end{align*}
where the latter quantity represents the radius of the touching circle
centered at $\omega-\eta$ with the sector border line. We collect
\[
\left\vert \arg\left(  c\left(  t\right)  \right)  \right\vert <\frac{3}%
{10}\pi,\forall t\in\left[  0,2\pi\right]  .
\]
for all circle contours, centered at $x\in\left[  \omega-\eta,\omega
+\eta\right]  $ with radius $r$ not larger than $\frac{4}{5}\omega$. \ Later
we will take use of those contours.

\noindent For $z\in D$, using (\ref{LiLehman}) with $n=1$ and together with
the observation that $\left\vert \operatorname{Im}\left(  \rho z\right)
\right\vert =\left\vert \rho z\right\vert \left\vert \sin\left(  \arg\left(
\rho z\right)  \right)  \right\vert \geq\left\vert \rho z\right\vert
\left\vert \sin\frac{\pi}{6}\right\vert =\frac{1}{2}\left\vert \rho
z\right\vert ,$ we find the estimate
\begin{equation}
\left\vert f_{\rho}\left(  z\right)  \right\vert \leq\left(  1+\frac
{4}{\left\vert \gamma\right\vert \left\vert z\right\vert }\right)  \left\vert
e^{-\frac{\alpha}{2}\left(  z-\omega\right)  ^{2}}\right\vert .
\label{frhoestimate1}%
\end{equation}
We now follow Lehman by estimating $f_{\rho}^{\left(  n\right)  }$ with \ a
contour integral
\[
f_{\rho}^{\left(  n\right)  }\left(  x\right)  =\frac{n!}{2\pi i}%
{\displaystyle\oint}
\frac{f_{\rho}\left(  z\right)  }{\left(  z-x\right)  ^{n+1}}dz,
\]
where we take the contour $c\left(  t\right)  =x+re^{it}.$ Proceeding in a
standard pattern and using (\ref{frhoestimate1}) we further estimate
\begin{align}
\left\vert f_{\rho}^{\left(  n\right)  }\left(  x\right)  \right\vert  &
\leq\frac{n!}{2\pi}\frac{1}{r^{n}}\int_{0}^{2\pi}\frac{\left\vert f_{\rho
}\left(  c\left(  t\right)  \right)  \right\vert }{1}dt\nonumber\\
&  \leq\frac{n!}{r^{n}}\max_{t\in\left[  0,2\pi\right]  }\left\vert f_{\rho
}\left(  x+re^{it}\right)  \right\vert \nonumber\\
&  \leq\left(  1+\frac{4}{\left\vert \gamma\right\vert \left\vert
x-r\right\vert }\right)  \frac{n!}{r^{n}}\max_{t\in\left[  0,2\pi\right]
}\left\vert e^{-\frac{\alpha}{2}\left(  x-\omega+re^{it}\right)  ^{2}%
}\right\vert . \label{frhoestimate2}%
\end{align}
Now, we take a look on the $\max$-term in (\ref{frhoestimate2}). Some routine
arguments show that%
\begin{align}
&  \max_{t\in\left[  0,2\pi\right]  }\left\vert e^{-\frac{\alpha}{2}\left(
x-\omega+re^{it}\right)  ^{2}}\right\vert \nonumber\\
&  =e^{-\frac{\alpha}{2}\left(  x-\omega\right)  ^{2}}e^{\frac{\alpha}{2}%
r^{2}}\max_{t\in\left[  0,2\pi\right]  }e^{\alpha r\left(  \left(  w-x\right)
\cos t-r\cos^{2}t\right)  }. \label{frhoestimate3}%
\end{align}
By some standard analysis arguments it is now easy to locate the maximum
values of this latter expression. For all $r>0$ we find%
\begin{equation}
\max_{t\in\left[  0,2\pi\right]  }e^{\alpha r\left(  \left(  w-x\right)  \cos
t-r\cos^{2}t\right)  }\leq e^{\alpha r\frac{\left(  \omega-x\right)  ^{2}}%
{4r}}=e^{\frac{\alpha}{4}\left(  \omega-x\right)  ^{2}}. \label{frhoestimate4}%
\end{equation}
Now, combining (\ref{frhoestimate2}), (\ref{frhoestimate3}) together with
(\ref{frhoestimate4}) we conclude that%
\[
\left\vert f_{\rho}^{\left(  n\right)  }\left(  x\right)  \right\vert
\leq\left(  1+\frac{4}{\left\vert \gamma\right\vert \left\vert x-r\right\vert
}\right)  \frac{n!}{r^{n}}e^{\frac{\alpha}{2}r^{2}}e^{-\frac{\alpha}{4}\left(
x-\omega\right)  ^{2}}.
\]
Together with (\ref{firstrestriction}) and $\left\vert x-r\right\vert \geq
x-r\geq\omega-\eta-r>\omega-\frac{\omega}{100}-\frac{4}{5}\omega=\frac
{19}{100}\omega$ and $\left\vert \operatorname{Im}\left(  \rho\right)
\right\vert =\left\vert \gamma\right\vert >A$ we arrive at%
\begin{equation}
\left\vert f_{\rho}^{\left(  n\right)  }\left(  x\right)  \right\vert
\leq\left(  1+\frac{22}{A\omega}\right)  \frac{n!}{r^{n}}e^{\frac{\alpha}%
{2}r^{2}}e^{-\frac{\alpha}{4}\left(  x-\omega\right)  ^{2}},
\label{frhoestimate5}%
\end{equation}
which is valid for $n\in\mathbb{N},0<r\leq\frac{4}{5}\omega,x\in\left[
w-\eta,\omega+\eta\right]  $ and $0<\eta<\frac{w}{100}.$ We fix now $N=\left[
A^{2}/\alpha\right]  $, which by (\ref{secondrestriction}) is a valid natural
number. Now we deviate from Lehman. For each individual $1\leq n\leq N$ we
select now $r=\sqrt{n/\alpha}$, which, again by (\ref{secondrestriction}) is a
valid choice for our purposes, since%
\[
r=\sqrt{\frac{n}{\alpha}}\leq\sqrt{\frac{N}{\alpha}}=\sqrt{\frac{\left[
\frac{A^{2}}{\alpha}\right]  }{\alpha}}\leq\frac{A}{\alpha}\leq\frac{4}%
{5}\omega,
\]
and inserting into (\ref{frhoestimate5}) establishes%
\begin{equation}
\left\vert f_{\rho}^{\left(  n\right)  }\left(  x\right)  \right\vert
\leq\left(  1+\frac{22}{A\omega}\right)  n!\left(  \frac{\alpha e}{n}\right)
^{\frac{n}{2}}e^{-\frac{\alpha}{4}\left(  x-\omega\right)  ^{2}}.
\label{frhoestimate6}%
\end{equation}
Armed with this latter estimate we start now estimating
(\ref{Partialintegrationestimate}) by finding relevant expressions for
$\left\vert f_{\rho}^{\left(  n\right)  }\left(  \omega\pm\eta\right)
\right\vert $ for $0\leq n\leq N-1$ and $\left\vert f_{\rho}^{\left(
N\right)  }\left(  x\right)  \right\vert $ with $x\in\left[  w-\eta
,\omega+\eta\right]  $. By combining (\ref{frhoestimate1}) together with
(\ref{frhoestimate6}), we finally arrive at%
\begin{align}
\left\vert f_{\rho}^{\left(  n\right)  }\left(  \omega\pm\eta\right)
\right\vert  &  \leq\left\{
\begin{array}
[c]{ll}%
\left(  1+\frac{22}{A\omega}\right)  e^{-\frac{\alpha}{4}\eta^{2}}, & n=0,\\
\left(  1+\frac{22}{A\omega}\right)  n!\left(  \frac{\alpha e}{n}\right)
^{\frac{n}{2}}e^{-\frac{\alpha}{4}\eta^{2}}, & n=1,2,\ldots,N-1,
\end{array}
\right. \nonumber\\
\left\vert f_{\rho}^{\left(  N\right)  }\left(  x\right)  \right\vert  &
\leq\left(  1+\frac{22}{A\omega}\right)  N!\left(  \frac{\alpha e}{N}\right)
^{\frac{N}{2}}. \label{frhoestimate7}%
\end{align}
Collecting now (\ref{Partialintegrationestimate}) together with
(\ref{frhoestimate7}) we establish%
\begin{align}
&  \left\vert \int_{\omega-\eta}^{\omega+\eta}e^{u\left(  \rho-\frac{1}%
{2}\right)  }f_{\rho}\left(  u\right)  du\right\vert \nonumber\\
&  \leq\frac{2\left(  1+\frac{22}{A\omega}\right)  }{\left\vert \gamma
\right\vert }e^{\frac{\omega+\eta}{2}}\left(  e^{-\frac{\alpha}{4}\eta^{2}%
}+\sum_{n=1}^{N-1}n!\left(  \frac{\alpha e}{n}\right)  ^{\frac{n}{2}}%
e^{-\frac{\alpha}{4}\eta^{2}}\frac{1}{\left\vert \gamma\right\vert ^{n}%
}\right. \nonumber\\
&  +\left.  \frac{N!\eta\left(  \frac{\alpha e}{N}\right)  ^{\frac{N}{2}}%
}{\left\vert \gamma\right\vert ^{N-1}}\right)  . \label{Sumestimate1}%
\end{align}
With $n!\leq\left(  \frac{n}{e}\right)  ^{n}e\sqrt{n},n\geq1$ we calculate%
\begin{align}
\sum_{n=1}^{N-1}n!\left(  \frac{\alpha e}{n}\right)  ^{\frac{n}{2}}\frac
{1}{\left\vert \gamma\right\vert ^{n}}  &  \leq\sum_{n=1}^{N-1}e\sqrt
{n}\left(  \frac{\alpha n}{e\gamma^{2}}\right)  ^{\frac{n}{2}}\nonumber\\
&  \leq\frac{eA}{\sqrt{\alpha}}\sum_{n=1}^{\infty}\left(  \frac{1}{\sqrt{e}%
}\right)  ^{n}\nonumber\\
&  =\frac{eA}{\sqrt{\alpha}}\frac{1}{\sqrt{e}-1}. \label{Sumestimate2}%
\end{align}
Similar we operate with%
\begin{align}
\frac{N!\eta\left(  \frac{\alpha e}{N}\right)  ^{\frac{N}{2}}}{\left\vert
\gamma\right\vert ^{N-1}}  &  \leq Ae\eta\sqrt{N}e^{-\frac{N}{2}}\left(
\frac{N\alpha}{A^{2}}\right)  ^{\frac{N}{2}}\nonumber\\
&  \leq A^{2}e^{\frac{3}{2}}\eta\frac{1}{\sqrt{\alpha}}e^{-\frac{A^{2}%
}{2\alpha}}. \label{Sumestimate3}%
\end{align}
Again, collecting (\ref{Sumestimate1}), (\ref{Sumestimate2}) together with
(\ref{Sumestimate3}) we calculate%
\begin{align*}
&  \left\vert \int_{\omega-\eta}^{\omega+\eta}e^{u\left(  \rho-\frac{1}%
{2}\right)  }f_{\rho}\left(  u\right)  du\right\vert \\
&  \leq\frac{2e\sqrt{e}\left(  1+\frac{22}{Aw}\right)  }{\left\vert
\gamma\right\vert }\frac{A}{\sqrt{\alpha}}e^{\frac{\omega+\eta}{2}}\left(
e^{-\frac{\alpha}{4}\eta^{2}}\left(  \frac{\sqrt{\alpha}}{A}\frac{1}{e\sqrt
{e}}+\frac{1}{e-\sqrt{e}}\right)  +A\eta e^{-\frac{A^{2}}{2\alpha}}\right)  .
\end{align*}
From this estimate, using Lemma \ref{gammaestimate}a, we obtain our final
result by%
\begin{align}
&  \sum_{\substack{\rho\\\left\vert \gamma\right\vert >A}}\int_{\omega-\eta
}^{\omega+\eta}K\left(  u-\omega\right)  ue^{-\frac{u}{2}}li\left(  e^{u\rho
}\right)  du\nonumber\\
&  \leq\sqrt{\frac{\alpha}{2\pi}}\sum_{\substack{\rho\\\left\vert
\gamma\right\vert >A}}\frac{1}{\left\vert \gamma\right\vert }\left\vert
\int_{\omega-\eta}^{\omega+\eta}e^{u\left(  \rho-\frac{1}{2}\right)  }f_{\rho
}\left(  u\right)  du\right\vert \nonumber\\
&  \leq\frac{4}{\sqrt{2\pi}}\left(  1+\frac{22}{Aw}\right)  A\log A\left(
\left(  1+\frac{e}{\sqrt{e}-1}\right)  \frac{1}{A}e^{-\frac{\alpha}{4}\eta
^{2}+\frac{\omega+\eta}{2}}+e\sqrt{e}\eta e^{-\frac{A^{2}}{2\alpha}%
+\frac{\omega+\eta}{2}}\right) \nonumber\\
&  \leq\left(  1+\frac{22}{Aw}\right)  A\log A\left(  \frac{8.283}{A}%
e^{-\frac{\alpha}{4}\eta^{2}+\frac{\omega+\eta}{2}}+7.152\eta e^{-\frac{A^{2}%
}{2\alpha}+\frac{\omega+\eta}{2}}\right) \nonumber\\
&  =R_{6}. \label{R6term}%
\end{align}
Finally, we collect the results from (\ref{Endformula}), (\ref{R1term}),
(\ref{R3term}), (\ref{R2term}), (\ref{R4term}), (\ref{R5term}), (\ref{R6term})
and we arrive at Theorem \ref{theorem31}. In case of the Riemann hypothesis,
we can combine (\ref{ineq03}), (\ref{LiLehman}), (\ref{Endformula}) with
$A\rightarrow\infty$, and together with (\ref{R1term}), (\ref{R3term}),
(\ref{R2term}), (\ref{R4term}), (\ref{R5term}) we obtain the conclusion of
Theorem \ref{theorem31} with the $R_{6}$ term in the estimate and the
conditions (\ref{Condition3}), (\ref{Condition4}) to be omitted.

\section{Numerical applications}

\label{Numerical applications}

We will give some demonstrations of how we can use Theorem \ref{theorem31} in
numerical practice to get sharper regions for the crossovers. First of all, it
is clear that the free choice of the value $\eta$ is in reality of limited
value for numerical applications, since this size $\eta$ seriously affects
some of the error terms. The most problematic error terms in this context are
the $R_{3}$ and $R_{6}$ terms. On the other hand, with increasing values
$\alpha$ the $R_{4}$ term becomes problematic, since we have to chose higher
values $T$ and the amount of numerical calculations in the sum for $I\left(
\omega,\eta\right)  $ is also raising up. Looking to the summation%
\[
\sum_{\substack{\rho\\0<\left\vert \gamma\right\vert \leq T}}e^{i\omega\gamma
}\left(  \frac{1}{\rho}+\frac{1}{\omega\rho^{2}}\right)  e^{-\gamma
^{2}/2\alpha}=S_{1}\left(  \alpha,\omega,T\right)  +S_{2}\left(  \alpha
,\omega,T\right)
\]
one can easily check, that the second term $1/\omega\rho^{2}$ contributes only
minor to the overall sum value, since by Lemma \ref{gammaestimate}b we have
\begin{equation}
\left\vert S_{2}\left(  \alpha,\omega,T\right)  \right\vert =\left\vert
\sum_{\substack{\rho\\0<\left\vert \gamma\right\vert \leq T}}\frac
{e^{i\omega\gamma}}{\omega\rho^{2}}e^{-\gamma^{2}/2\alpha}\right\vert
\leq\frac{2}{\omega}\sum_{\substack{\rho\\0<\gamma}}\frac{1}{\gamma^{2}}%
\leq\frac{1}{21\omega}. \label{Sumestimate4}%
\end{equation}
The real advantage of the $S_{2}$ summation is established in the better error
terms. The numerical summation is affected by the numerical accuracy of the
zeta zeros. Thus, following the error analysis by te Riele, we have to handle
two further additional error terms $\Delta S_{1}$ and $\Delta S_{2}$.

\subsection{The numerical sums}

We begin with the error analysis for the additional terms. With $\rho
=1/2+i\gamma$ we have%
\begin{align*}
\operatorname{Re}\left(  \frac{e^{i\omega\gamma}}{\rho}\right)   &  =\frac
{1}{\frac{1}{4}+\gamma^{2}}\left(  \frac{1}{2}\cos\omega\gamma+\gamma
\sin\omega\gamma\right)  ,\\
\operatorname{Re}\left(  \frac{e^{i\omega\gamma}}{\rho^{2}}\right)   &
=\frac{1}{\left(  \frac{1}{4}+\gamma^{2}\right)  ^{2}}\left(  \left(  \frac
{1}{4}-\gamma^{2}\right)  \cos\omega\gamma+\gamma\sin\omega\gamma\right)  .
\end{align*}
We set%
\begin{align*}
s\left(  \alpha,\omega,\gamma\right)   &  =\frac{1}{\frac{1}{4}+\gamma^{2}%
}\left(  \cos\omega\gamma+2\gamma\sin\omega\gamma\right)  e^{-\frac{\gamma
^{2}}{2\alpha}},\\
t\left(  \alpha,\omega,\gamma\right)   &  =\frac{1}{\omega\left(  \frac{1}%
{4}+\gamma^{2}\right)  ^{2}}\left(  \left(  \frac{1}{2}-2\gamma^{2}\right)
\cos\omega\gamma+2\gamma\sin\omega\gamma\right)  e^{-\frac{\gamma^{2}}%
{2\alpha}}.
\end{align*}
Then, we can write%
\begin{align*}
S_{1}\left(  \alpha,\omega,T\right)   &  =\sum_{\substack{\rho\\0<\left\vert
\gamma\right\vert \leq T}}\frac{e^{i\omega\gamma}}{\rho}e^{-\frac{\gamma^{2}%
}{2\alpha}}=\sum_{0<\gamma\leq T}s\left(  \alpha,\omega,\gamma\right)  ,\\
S_{2}\left(  \alpha,\omega,T\right)   &  =\sum_{\substack{\rho\\0<\left\vert
\gamma\right\vert \leq T}}\frac{e^{i\omega\gamma}}{\omega\rho^{2}}%
e^{-\frac{\gamma^{2}}{2\alpha}}=\sum_{0<\gamma\leq T}t\left(  \alpha
,\omega,\gamma\right)  .
\end{align*}
Now, we denote by $\gamma^{\ast}$ the numerical approximation for $\gamma$ for
which we can assume an overall bound\footnote{All numerical computations were
carried out on different Intel(R) Core(TM) i7-8700 CPU 3.20GHz platforms
simultaneously together with Mathematica 13.3 and Mathematica 14.1 software
packages.} by $\left\vert \gamma^{\ast}-\gamma\right\vert \leq10^{-9}$ for all
$0<\gamma\leq T$. The corresponding numerical sums are denoted by%
\begin{align*}
S_{1}^{\ast}\left(  \alpha,\omega,T\right)   &  =\sum_{0<\gamma\leq T}s\left(
\alpha,\omega,\gamma^{\ast}\right)  ,\\
S_{2}^{\ast}\left(  \alpha,\omega,T\right)   &  =\sum_{0<\gamma\leq T}t\left(
\alpha,\omega,\gamma^{\ast}\right)  .
\end{align*}
Further, denote by%
\[
S^{\ast}\left(  \alpha,\omega,T\right)  =S_{1}^{\ast}\left(  \alpha
,\omega,T\right)  +S_{2}^{\ast}\left(  \alpha,\omega,T\right)  .
\]
Then, by the mean value theorem, for some $x_{\gamma}\in\left(  \gamma^{\ast
},\gamma\right)  $ we calculate
\begin{align}
\Delta S_{1}  &  =\left\vert S_{1}^{\ast}\left(  \alpha,\omega,T\right)
-S_{1}\left(  \alpha,\omega,T\right)  \right\vert \nonumber\\
&  \leq\sum_{0<\gamma\leq T}\left\vert \frac{\partial}{\partial x}s\left(
\alpha,\omega,x_{\gamma}\right)  \right\vert \left\vert \gamma^{\ast}%
-\gamma\right\vert \nonumber\\
&  \leq10^{-9}\sum_{0<\gamma\leq T}\left\vert \frac{\partial}{\partial
x}s\left(  \alpha,\omega,x_{\gamma}\right)  \right\vert , \label{DeltaS1}%
\end{align}
with%
\begin{equation}
\left\vert \frac{\partial}{\partial x}s\left(  \alpha,\omega,x\right)
\right\vert \leq\frac{1}{x}\left(  2\omega+\frac{\omega}{x}+\frac{2x}{\alpha
}+\frac{2}{x^{2}}+\frac{4}{x}\right)  . \label{RieleS1}%
\end{equation}
The same calculation is done for the $S_{2}$ summation. For some $x_{\gamma
}\in\left(  \gamma^{\ast},\gamma\right)  $ we have
\begin{align}
\Delta S_{2}  &  =\left\vert S_{2}^{\ast}\left(  \alpha,\omega,T\right)
-S_{2}\left(  \alpha,\omega,T\right)  \right\vert \nonumber\\
&  \leq\sum_{0<\gamma\leq T}\left\vert \frac{\partial}{\partial x}t\left(
\alpha,\omega,x_{\gamma}\right)  \right\vert \left\vert \gamma^{\ast}%
-\gamma\right\vert \nonumber\\
&  \leq10^{-9}\sum_{0<\gamma\leq T}\left\vert \frac{\partial}{\partial
x}t\left(  \alpha,\omega,x_{\gamma}\right)  \right\vert , \label{DeltaS2}%
\end{align}
with%
\begin{equation}
\left\vert \frac{\partial}{\partial x}t\left(  \alpha,\omega,x\right)
\right\vert \leq\frac{1}{x^{2}}\left(  \frac{2}{x}+\frac{2x}{\alpha\omega
}+\frac{1}{2x^{2}}+2+\frac{8}{\omega x}+\frac{8}{\omega x^{2}}\right)  .
\label{RieleS2}%
\end{equation}

\medskip\noindent Now, we demonstrate the efficiency of our formulas by
applying them to the reworked Bays-Hudson region by Chao-Plymen (2010) and the
Saouter-Demichel region (2010) which lie on the front end of the lowest known
certified positive regions. We proceed by inserting the original values in our
error terms and then we resize $\eta$ so small as we can do with regard to the
resulting numerical overall error.

\subsection{Reworked Bays-Hudson region by Chao-Plymen, 2010}

\noindent Fixing $\omega<728,\alpha=1.34\cdot10^{11},T=\gamma_{2000000}%
<1131944.4718$ and $14<\gamma_{1}\leq\gamma<1131945$ we get from
(\ref{RieleS1}) and (\ref{RieleS2})%
\begin{align*}
\left\vert \frac{\partial}{\partial x}s\left(  \alpha,\omega,x\right)
\right\vert  &  \leq\frac{1508.3}{x},\\
\left\vert \frac{\partial}{\partial x}t\left(  \alpha,\omega,x\right)
\right\vert  &  \leq\frac{2.14625}{x^{2}},
\end{align*}
and, using Lemma \ref{gammafintesumestimate}, Lemma \ref{gammaestimate}b
together with $\left\vert x_{\gamma}-\gamma\right\vert <\left\vert
\gamma^{\ast}-\gamma\right\vert \leq10^{-9}$, we estimate (\ref{DeltaS1}) and
(\ref{DeltaS2})
\begin{align*}
\Delta S_{1}  &  \leq10^{-9}\times1508.3\times1.0001\times\sum_{0<\gamma\leq
T}\frac{1}{\gamma}\leq1.89855\times10^{-5},\\
\Delta S_{2}  &  \leq10^{-9}\times2.14625\times1.0001^{2}\times\sum_{0<\gamma
}\frac{1}{\gamma^{2}}\leq4.9599\times10^{-11}.
\end{align*}
With $\omega=727.952018$, $\eta=0.00016$ and $A=1.022\cdot10^{7}$ our machine
computation for the $S^{\ast}$ sum gives $S^{\ast}\left(  \alpha
,\omega,T\right)  =-1.006553478788955$, a slightly different value than the
one obtained by Chao and Plymen with size $-1.006569$, due to our double
summation. We set $R=R_{1}+R_{2}+R_{3}+R_{4}+R_{5}+R_{6}$. Now, we present the
following tables, where the column with the lower estimates for $I\left(
\omega,\eta\right)  $ is calculated by $I\left(  \omega,\eta\right)
\geq-1-S^{\ast}\left(  \alpha,\omega,T\right)  -\Delta S_{1}-\Delta S_{2}-R$.

\begin{table}[H]
\begin{center}
{
\begin{tabular}
[c]{|r|c|c|}\hline
\rowcolor{orange} $\eta$ & $R$ & Lower estimate for $I\left(  \omega
,\eta\right)  $\\\hline
$0.0001600$ & $6.14384 \times10^{-3}$ & $0.000390651$\\
$0.0001400$ & $6.14369 \times10^{-3}$ & $0.000390805$\\
$0.0001200$ & $6.14353 \times10^{-3}$ & $0.000390959$\\
$0.0001063$ & $6.20670 \times10^{-3}$ & $0.000327792$\\
$0.0001061$ & $6.40600 \times10^{-3}$ & $0.000128492$\\
$0.0001060$ & $6.67779 \times10^{-3}$ & negative\\
$0.0001050$ & $6.33687 \times10^{-1}$ & negative\\\hline
\end{tabular}
}
\end{center}
\caption{Resizing $\eta$ in the reworked Bays-Hudson region.}%
\label{ResizeBays}
\end{table}

\noindent Comparing to the original estimate for $I\left(  \omega,\eta\right)
$ by Chao and Plymen with calculated value $0.0002$ we perform better.
Moreover, the best resized value $\eta=0.0001061$ is improved by about $33.68$
percent compared to the original $\eta$.

\subsection{The Saouter-Demichel region, 2010}

\noindent Again, fixing $\omega<728,\alpha=6\cdot10^{12},T=\gamma
_{22000000}<10379599.7274$ and $14<\gamma_{1}\leq\gamma<10379600$ we get from
(\ref{RieleS1}) and (\ref{RieleS2})%
\begin{align*}
\left\vert \frac{\partial}{\partial x}s\left(  \alpha,\omega,x\right)
\right\vert  &  \leq\frac{1508.2}{x},\\
\left\vert \frac{\partial}{\partial x}t\left(  \alpha,\omega,x\right)
\right\vert  &  \leq\frac{2.14625}{x^{2}}.
\end{align*}
Proceeding as before we estimate
\begin{align*}
\Delta S_{1}  &  \leq2.6011\times10^{-5},\\
\Delta S_{2}  &  \leq4.9599\times10^{-11}.
\end{align*}
With $\omega=727.95134$, $\eta=0.000022833$ and $A=6.85\cdot10^{7}$ our
computational value for the $S^{\ast}$ sum gives $S^{\ast}\left(
\alpha,\omega,T\right)  \geq-1.002922947193156$, which again is near at the
Saouter-Demichel machine value with size $-1.002906086981405$. Then

\begin{table}[H]
\begin{center}
{
\begin{tabular}
[c]{|r|c|c|}\hline
\rowcolor{orange} $\eta$ & $R$ & Lower estimate for $I\left(  \omega
,\eta\right)  $\\\hline
$2A/\alpha=0.0000228$ & $2.79507 \times10^{-3}$ & $0.000101863$\\
$0.0000200$ & $2.79503 \times10^{-3}$ & $0.000101907$\\
$0.0000180$ & $2.79500 \times10^{-3}$ & $0.000101939$\\
$0.0000160$ & $2.79527 \times10^{-3}$ & $0.000101670$\\
$0.0000159$ & $2.83095 \times10^{-3}$ & $0.000015987$\\
$0.0000158$ & $6.97516 \times10^{-3}$ & negative\\
$0.0000155$ & $5.15554 \times10^{+1}$ & negative\\\hline
\end{tabular}
}
\end{center}
\caption{Resizing $\eta$ in the Saouter-Demichel region.}%
\label{ResizeSaouter}%
\end{table}

\noindent Comparing to the original estimate for $I\left(  \omega,\eta\right)
$ by Saouter-Demichel with calculated value $0.00003751696746$ we perform much
better. Moreover, the best resized value $\eta=0.0000159$ is improved by about
$30.80$ percent compared to the original $\eta$.

\subsection{Runs of successive integers}

From our better estimates for $I\left(  \omega,\eta\right)  $ in Table
\ref{ResizeBays} and Table \ref{ResizeSaouter} we can further deduce larger
ranges of consecutive integers for which the difference $\pi\left(  x\right)
-li\left(  x\right)  $ remains positive. To this end, denote $\delta$ a fixed
positive real number for which we have
\begin{align*}
I\left(  \omega,\eta\right)   &  =\int_{\omega-\eta}^{\omega+\eta}K\left(
u-\omega\right)  ue^{-u/2}\left(  \pi\left(  e^{u}\right)  -li\left(
e^{u}\right)  \right)  du\\
&  =\int_{\omega-\eta}^{\omega+\eta}K\left(  u-\omega\right)  F\left(
u\right)  du>\delta.
\end{align*}
Then%
\[
\sup_{u\in\left[  \omega-\eta,\omega+\eta\right]  }F\left(  u\right)  \geq
\int_{\omega-\eta}^{\omega+\eta}K\left(  u-\omega\right)  F\left(  u\right)
du>\delta,
\]
from which we deduce that for some $u\in\left[  \omega-\eta,\omega
+\eta\right]  $ we have $F\left(  u\right)  \geq\delta$. Thus, since
$e^{u/2}/u$ is an increasing function for $u>1$, we find that%
\[
\pi\left(  e^{u}\right)  -li\left(  e^{u}\right)  \geq\delta e^{\frac
{\omega-\eta}{2}}\frac{1}{\omega-\eta}.
\]
Now, let $x\in\mathbb{R}$ with $\pi\left(  x\right)  -li\left(  x\right)  \geq
A>0$. Then for all $0<y<A\log x$ we still have $\pi\left(  x+y\right)
-li\left(  x+y\right)  >0$, since%
\begin{align*}
&  \pi\left(  x+y\right)  -li\left(  x+y\right) \\
&  =\pi\left(  x+y\right)  -\pi\left(  x\right)  +\pi\left(  x\right)
-li\left(  x\right)  +li\left(  x\right)  -li\left(  x+y\right) \\
&  \geq0+A-\int_{x}^{x+y}\frac{dt}{\log t}\\
&  \geq A-y\frac{1}{\log x} >0.
\end{align*}
If we take $A=\delta e^{\frac{\omega-\eta}{2}}\frac{1}{\omega-\eta}$ and
$x=e^{u}$ for some $u\in\left[  \omega-\eta,\omega+\eta\right]  $ we find
that, for $0<y<A\left(  \omega-\eta\right)  $,
\[
\pi\left(  x+y\right)  -li\left(  x+y\right)  \geq A-\frac{y}{\log e^{u}}\geq
A-\frac{y}{\omega-\eta}>0.
\]
Thus we have a run of consecutive integers from $x$ to $x+\delta
e^{\frac{\omega-\eta}{2}}$ for which the difference $\pi\left(  x\right)
-li\left(  x\right)  $ is positive in the vicinity of $x=e^{\omega}$. So we
can state the following additional theorem.

\begin{theorem}
In the vicinity of $e^{727.952018}$ there are more than $4.61877
\times10^{154}$ successive integers where the difference $\pi\left(  x\right)
-li\left(  x\right)  $ is positive.
\end{theorem}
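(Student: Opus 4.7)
My plan is to observe that the analytic content of the theorem has already been fully established in the preceding subsection on runs of successive integers: once a certified inequality $I(\omega,\eta)>\delta$ is in hand, the chain of implications there produces a starting point $x=e^u$ with $u\in[\omega-\eta,\omega+\eta]$ at which $\pi(x)-li(x)\ge\delta\,e^{(\omega-\eta)/2}/(\omega-\eta)$, and then a block of at least $\delta\,e^{(\omega-\eta)/2}$ consecutive integers above $x$ on which $\pi(\cdot)-li(\cdot)>0$. So the statement reduces to a direct numerical corollary, and my only task is to pick the triple $(\omega,\eta,\delta)$ that maximises the product $\delta\,e^{(\omega-\eta)/2}$ from the tables already assembled.

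The numerical input I will use is the reworked Bays--Hudson region with $\omega=727.952018$ from Table \ref{ResizeBays}. Since the run-length lower bound $\delta\,e^{(\omega-\eta)/2}$ is increasing in $\delta$ and (mildly) decreasing in $\eta$, I will scan the rows of that table. The shrinking of $\eta$ past the threshold near $0.0001061$ collapses $\delta$ drastically, so the optimum sits in the upper part of the table: at $\eta=0.0001200$ the certified lower estimate $I(\omega,\eta)\ge 0.000390959$ is essentially the largest available $\delta$, and the extra exponential cost of not taking $\eta$ any smaller is negligible compared with the benefit of keeping $\delta$ near its maximum.

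With these parameters I then simply evaluate
\[
\delta\,e^{(\omega-\eta)/2} \;=\; 0.000390959\cdot e^{363.975949} \;>\; 4.61877\times 10^{154},
\]
which delivers the theorem.

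The only real obstacle is arithmetic precision: the certified lower bound $0.000390959$ already incorporates the full error budget ($R_1,\dots,R_6$ together with the zeta-zero rounding errors $\Delta S_1,\Delta S_2$), so I need only guarantee enough precision in the final evaluation of $e^{363.975949}$. No additional analytic ingredient is required beyond what has already been assembled in the preceding subsection, and the whole proof collapses to one display line plus the identification of the optimal row of Table \ref{ResizeBays}.
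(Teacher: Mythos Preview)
Your proposal is correct and follows exactly the approach the paper itself takes: the theorem is stated immediately after the run-of-integers derivation as a direct numerical corollary, with no separate proof given, so all that is needed is to plug a certified row of Table~\ref{ResizeBays} into the bound $\delta\,e^{(\omega-\eta)/2}$. Your choice $\eta=0.00012$, $\delta=0.000390959$ works (yielding about $4.62\times10^{154}$), as does the paper's implicit choice of the original Chao--Plymen row $\eta=0.00016$, $\delta=0.000390651$; either exceeds the stated $4.61877\times10^{154}$.
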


\section{The $R_{5}$ error term in the Saouter-Trudgian-Demichel result}

\label{Trudgian}

Since the error estimate for the relevant $R_{5}$ term in
(\cite{SaouterandDemichelandTrudgian}, Theorem 3.1) follows the same bounding
strategy as in our proof we present the corresponding result on the $R_{5}$
term. However, we do not present a detailed proof which can be established by standard arguments following the subsequent steps.

\begin{theorem}
\label{SaouterTrudgianDemichel}
Let $A$ be a positive number such that $\beta=\frac{1}{2}$ for all zeros
$\rho=\beta+i\gamma$ of $\zeta\left(  s\right)  $ for which $0<\gamma\leq A$.
Let $\alpha,\eta$ and $\omega$ be positive numbers such that $\omega>73.69$
and the conditions%
\begin{align*}
\frac{5A}{4\omega}  &  \leq\alpha\leq A^{2}\text{,}\\
0  &  <\eta<\frac{\omega}{100}%
\end{align*}
hold. Let%
\begin{align*}
K\left(  x\right)   &  =\sqrt{\frac{\alpha}{2\pi}}e^{-\frac{\alpha}{2}x^{2}%
},\\
I\left(  \omega,\eta\right)   &  =\int_{\omega-\eta}^{\omega+\eta}K\left(
u-\omega\right)  \frac{ue^{-u/2}}{1+\frac{2}{u}+\frac{10.04}{u^{2}}}\left(
\pi\left(  e^{u}\right)  -li\left(  e^{u}\right)  \right)  du.
\end{align*}
Then, for $2\pi e<T\leq A,$%
\begin{align*}
I\left(  \omega,\eta\right)   &  =-1+\sum_{\substack{\rho\\0<\left\vert
\gamma\right\vert \leq T}}e^{i\gamma\omega}\left(  \frac{1}{\rho}+\frac
{1}{\omega\rho^{2}}\right)  e^{-\gamma^{2}/2\alpha}+\\
&  R_{1}-R_{2}-R_{3}-R_{4}-R_{5}%
\end{align*}
where%
\begin{align*}
R_{1}  &  =\frac{2}{\sqrt{\alpha}}K\left(  \eta\right)  ,\\
R_{2}  &  =\left(  \omega+\eta\right)  \left(  \log2e^{-\left(  \omega
-\eta\right)  /2}+3e^{-\left(  \omega-\eta\right)  /6}\right)  ,\\
R_{3}  &  =0.19K\left(  \eta\right)  +\frac{0.35}{\omega^{2}\sqrt{\alpha}%
}\left(  \log^{2}\frac{A}{2\pi}+11.81\right)  +\frac{0.00292}{\left(
\omega-\eta\right)  ^{2}},\\
R_{4}  &  =e^{-\frac{T^{2}}{2\alpha}}\left(  \frac{\alpha}{\pi T^{2}}\log
\frac{T}{2\pi}+\frac{8\log T}{T}+\frac{4\alpha}{T^{3}}\right)  \left(
1+\frac{1}{\omega T}\right)  ,\\
R_{5}  &  =e^{\frac{\omega+\eta}{2}}\left(  1+\frac{22}{A\omega}\right)  A\log
A\left(  \frac{13.840}{A}e^{-\frac{\alpha}{4}\eta^{2}}+11.951\eta
e^{-\frac{A^{2}}{2\alpha}}\right)  .
\end{align*}
If the Riemann hypothesis holds, the factor $e^{\left(  \omega+\eta\right)
/2}$ in $R_{5}$ can be replaced by $1$.
\end{theorem}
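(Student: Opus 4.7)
The plan is to mirror the derivation of Theorem \ref{theorem31} throughout Section \ref{ErrorTerms}, absorbing the extra kernel factor $1/(1+2/u+10.04/u^{2})$ at each stage. The denominator is engineered to cancel the leading Dusart polynomial corrections $2/u+10.04/u^{2}$ coming from (\ref{ineq02}); after multiplying through, the polynomial content of (\ref{ineq03}) collapses to the constant $-1$ and the two exponentially small residuals $\log 2 \cdot u e^{-u/2}$ and $\frac{2}{\log 2} u e^{-u/6}$, which are what feed $R_{2}$.

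First I would produce the analogue of (\ref{Endformula}) by applying (\ref{LiLehman}) with $n=2$ to isolate the double sum over $1/\rho$ and $1/(\omega\rho^{2})$ plus a cubic remainder. The constant contribution is handled by Lemma \ref{Kxequation3} with $c=0$, giving $-1+2\int_{\eta}^{\infty} K(x)\,dx$; Lemma \ref{Kxestimate2} with $h\equiv 1$ bounds the tail by $\frac{2}{\sqrt{\alpha}}K(\eta)$, which is exactly $R_{1}$. The two bounded residuals $\log 2\cdot ue^{-u/2}$ and $\frac{2}{\log 2} ue^{-u/6}$ are then estimated by maximum over $[\omega-\eta,\omega+\eta]$ times $\int K(u-\omega)\,du\le 1$, producing $R_{2}$ exactly as in the opening step of (\ref{R1term}), with a mild rounding of $2/\log 2$ up to $3$.

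For the zero-sum contributions I would replay (\ref{R3term}) and (\ref{R2term}) line by line: Lemma \ref{Kxequation3} isolates the clean main term $e^{-\gamma^{2}/2\alpha}\bigl(1/\rho+1/(\omega\rho^{2})\bigr)$, while the tails are controlled by Lemma \ref{Kxestimate3} together with Lemma \ref{Kxequation2}, and Lemma \ref{gammaestimate}(b) collapses $\sum 1/\gamma^{2}$ and $\sum 1/\gamma^{3}$ to numerical constants; these produce the $0.19\,K(\eta)$ and $0.00292/(\omega-\eta)^{2}$ pieces of $R_{3}$. The extra term $\frac{0.35}{\omega^{2}\sqrt{\alpha}}\bigl(\log^{2}(A/2\pi)+11.81\bigr)$ arises from a subsidiary sum bounded through Lemma \ref{gammafintesumestimate}, reflecting the interaction of the Dusart denominator with the $1/\rho$ expansion. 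The $R_{4}$ truncation from $T$ up to $A$ is inherited verbatim from (\ref{R4term}), being orthogonal to the weight modification, and the cubic remainder from (\ref{LiLehman}) is absorbed exactly as in (\ref{R5term}).

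The main obstacle is $R_{5}$, which plays the role of $R_{6}$ in Theorem \ref{theorem31}. I would again set $f_{\rho}(z)=\rho z e^{-\rho z} li(e^{\rho z}) e^{-\frac{\alpha}{2}(z-\omega)^{2}}$, analytic on the sector $D=\{z:|\arg z|\le \tfrac{3}{10}\pi\}$, and treat the Dusart denominator as a uniformly bounded multiplier on $[\omega-\eta,\omega+\eta]$. Successive integration by parts as in (\ref{Partialintegrationestimate}) with $N=[A^{2}/\alpha]$, the Cauchy bound (\ref{frhoestimate5}) at the optimal radius $r=\sqrt{n/\alpha}$, the geometric series estimates (\ref{Sumestimate2})--(\ref{Sumestimate3}), and finally Lemma \ref{gammaestimate}(a) for the outer sum over $|\gamma|>A$, yield the required bound. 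The enlarged constants $13.840$ and $11.951$ (versus $8.283$ and $7.152$ in $R_{6}$) reflect derivatives of the Dusart multiplier contributing extra factors throughout the partial-integration chain. The RH-case reduction replaces $e^{(\omega+\eta)/2}$ by $1$ in $R_{5}$ because on RH we may let $A\to\infty$, and only the normalising factor from the surviving portion of the main sum remains.
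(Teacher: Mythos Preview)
Your treatment of $R_{1}$--$R_{4}$ is fine and matches the paper's approach (the paper in fact just cites the original Saouter--Trudgian--Demichel result for those terms). The gap is in your handling of $R_{5}$.

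You define $f_{\rho}$ \emph{without} the Dusart factor and then say you will ``treat the Dusart denominator as a uniformly bounded multiplier on $[\omega-\eta,\omega+\eta]$''. This does not work: the whole point of the repeated integration by parts in (\ref{Partialintegrationestimate}) is to extract powers of $1/|\gamma|$ from the oscillation of $e^{u(\rho-1/2)}$, and for that you must differentiate the \emph{entire} non-oscillatory part of the integrand. If the Dusart factor sits outside $f_{\rho}$, pulling it out by its sup-norm on the real interval destroys the oscillatory gain; leaving it inside forces you to differentiate the product, which your Cauchy-estimate machinery (\ref{frhoestimate5}) is not set up to handle. Your closing remark that the inflated constants ``reflect derivatives of the Dusart multiplier'' is therefore not the right mechanism.

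What the paper does is absorb the factor $1/(1+2/z+10.04/z^{2})$ \emph{into} $f_{\rho}$ from the start, so that the contour/Cauchy argument applies directly. This forces two changes you have missed: the analyticity region must shrink to $E=\{|\arg z|\le\tfrac{3}{10}\pi,\ |z|\ge 14\}$ to avoid the poles at $-1\pm i\sqrt{9.04}$, and the condition $\omega>73.69$ is exactly what guarantees $\tfrac{19}{100}\omega\ge 14$ so that the Cauchy circles stay inside $E$. On $E$ one has the uniform bound $\bigl|z^{2}/(z^{2}+2z+10.04)\bigr|\le (1+\sqrt{10.04}/(14-\sqrt{10.04}))^{2}<1.671$, and this single numerical factor, multiplying the constants $8.283$ and $7.152$ of (\ref{R6term}), is precisely what produces $13.840$ and $11.951$.
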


\begin{remark} Since Theorem \ref{SaouterTrudgianDemichel} is applied together with the same double summation formula as in Theorem \ref{theorem31} the error terms $R_{1}$, $R_{2}$, $R_{3}$ and $R_{4}$ remain unchanged in their original representation.
\end{remark}

\begin{proof}
Following the procedure as in Lehman we need an estimate for
\[
\sum_{\substack{\rho\\\left\vert \gamma\right\vert >A}}\int_{\omega-\eta
}^{\omega+\eta}K\left(  u-\omega\right)  \frac{ue^{-\frac{u}{2}}li\left(
e^{u\rho}\right)  }{1+\frac{2}{u}+\frac{10.04}{u^{2}}}du.
\]
We start with
\[
E=\left\{  z\in\mathbb{C}:-\frac{3}{10}\pi\leq\arg\left(  z\right)  \leq
\frac{3}{10}\pi,\quad\left\vert z\right\vert \geq14\right\}
\]
together with the function $f:E\rightarrow\mathbb{C}$ by defining
\[
f_{\rho}\left(  z\right)  =\rho ze^{-\rho z}li\left(  e^{\rho z}\right)
\frac{1}{1+\frac{2}{z}+\frac{10.04}{z^{2}}}e^{-\frac{\alpha}{2}\left(
z-\omega\right)  ^{2}}.
\]
We may write
\begin{align}
&  \int_{\omega-\eta}^{\omega+\eta}K\left(  u-\omega\right)  \frac
{ue^{-\frac{u}{2}}li\left(  e^{u\rho}\right)  }{1+\frac{2}{u}+\frac
{10.04}{u^{2}}}du\nonumber\\
&  =\sqrt{\frac{\alpha}{2\pi}}\frac{1}{\rho}\int_{\omega-\eta}^{\omega+\eta
}e^{u\left(  \rho-\frac{1}{2}\right)  }f_{\rho}\left(  u\right)
du.\label{Trudgian1}%
\end{align}
Performing partial integration for (\ref{Trudgian1}) we obtain for all $N\geq1$
the estimate
\begin{align}
&  \left\vert \int_{\omega-\eta}^{\omega+\eta}e^{u\left(  \rho-\frac{1}%
{2}\right)  }f_{\rho}\left(  u\right)  du\right\vert \nonumber\\
&  \leq e^{\frac{\omega+\eta}{2}}\sum_{n=0}^{N-1}\frac{\left\vert f_{\rho
}^{\left(  n\right)  }\left(  \omega+\eta\right)  \right\vert +\left\vert
f_{\rho}^{\left(  n\right)  }\left(  \omega-\eta\right)  \right\vert
}{\left\vert \gamma\right\vert ^{n+1}}\nonumber\\
&  +e^{\frac{\omega+\eta}{2}}\frac{1}{\left\vert \gamma\right\vert ^{N}}%
\int_{\omega-\eta}^{\omega+\eta}\left\vert f_{\rho}^{\left(  N\right)
}\left(  u\right)  \right\vert du.\label{Trudgian2}%
\end{align}
For all $z\in E$ and further, since $\left\vert \operatorname{Im}\left(
\rho\right)  \right\vert >14.1$ for all non trivial zeta zeros, we have%
\begin{align*}
0.4774\pi &  <\left\vert \arg\left(  \rho\right)  \right\vert <\frac{\pi}%
{2},\\
0 &  \leq\left\vert \arg\left(  z\right)  \right\vert \leq\frac{3}{10}\pi.
\end{align*}
Thus%
\[
\frac{\pi}{6}<0.1774\pi\leq\left\vert \arg\left(  \rho z\right)  \right\vert
\leq\frac{4}{5}\pi<\frac{5}{6}\pi.
\]
from which we obtain that $f_{\rho}$ is an analytic function in the sector
plane $E$ provided that the polynomial $z^{2}+2z+10.04$ is analytic in $E$.
Since $z^{2}+2z+10.04$ has roots at $z_{0}$ and $\overline{z_{0}}$ with
$z_{0}=-1+i\sqrt{9.04}$ which are outside of $E$, this is certified. Using
$\left\vert \operatorname{Im}\left(  \rho z\right)  \right\vert =\left\vert
\rho z\right\vert \left\vert \sin\left(  \arg\left(  \rho z\right)  \right)
\right\vert \geq\left\vert \rho z\right\vert \left\vert \sin\frac{\pi}%
{6}\right\vert =\frac{1}{2}\left\vert \rho z\right\vert $, $\left\vert \rho
z-t\right\vert ^{2}\geq\left\vert \operatorname{Im}\left(  \rho z\right)
\right\vert $ together with $\left\vert z-z_{o}\right\vert \geq\left\vert
z\right\vert -\left\vert z_{0}\right\vert \geq14-\sqrt{10.04}>0$ we obtain by
(\ref{LiLehman}) with $n=1$ the estimate
\begin{align}
\left\vert f_{\rho}\left(  z\right)  \right\vert  &  \leq\left(  1+\frac
{4}{\left\vert \gamma\right\vert \left\vert z\right\vert }\right)  \left\vert
\frac{z^{2}}{z^{2}+2z+10.04}\right\vert \left\vert e^{-\frac{\alpha}{2}\left(
z-\omega\right)  ^{2}}\right\vert \nonumber\\
&  \leq\left(  1+\frac{4}{\left\vert \gamma\right\vert \left\vert z\right\vert
}\right)  \left(  1+\frac{\sqrt{10.04}}{\left\vert z\right\vert -\sqrt{10.04}%
}\right)  ^{2}\left\vert e^{-\frac{\alpha}{2}\left(  z-\omega\right)  ^{2}%
}\right\vert \nonumber\\
&  \leq1.671\left(  1+\frac{4}{\left\vert \gamma\right\vert \left\vert
z\right\vert }\right)  \left\vert e^{-\frac{\alpha}{2}\left(  z-\omega\right)
^{2}}\right\vert .\label{Trudgian3}%
\end{align}
Estimating the terms $f_{\rho}^{\left(  n\right)  }$ with our contour integral
together with (\ref{frhoestimate3}), (\ref{frhoestimate4}) and (\ref{Trudgian3}) we
get
\[
\left\vert f_{\rho}^{\left(  n\right)  }\left(  x\right)  \right\vert
\leq1.671\left(  1+\frac{4}{\left\vert \gamma\right\vert \left\vert
x-r\right\vert }\right)  \frac{n!}{r^{n}}e^{\frac{\alpha}{2}r^{2}}%
e^{-\frac{\alpha}{4}\left(  x-\omega\right)  ^{2}}%
\]
With $\left\vert x-r\right\vert \geq x-r\geq\omega-\eta-r>\omega-\frac{\omega
}{100}-\frac{4}{5}\omega=\frac{19}{100}\omega$ $\geq14$ for $\omega\geq73.69$
and $\left\vert \operatorname{Im}\left(  \rho\right)  \right\vert =\left\vert
\gamma\right\vert >A$ we arrive at%
\begin{align}
\left\vert f_{\rho}^{\left(  n\right)  }\left(  \omega\pm\eta\right)
\right\vert  &  \leq\left\{
\begin{array}
[c]{ll}%
1.671\left(  1+\frac{22}{A\omega}\right)  e^{-\frac{\alpha}{4}\eta^{2}}, &
n=0,\\
1.671\left(  1+\frac{22}{A\omega}\right)  n!\left(  \frac{\alpha e}{n}\right)
^{\frac{n}{2}}e^{-\frac{\alpha}{4}\eta^{2}}, & n=1,2,\ldots,N-1,
\end{array}
\right.  \label{Trudgian4}\\
\left\vert f_{\rho}^{\left(  N\right)  }\left(  x\right)  \right\vert  &
\leq1.671\left(  1+\frac{22}{A\omega}\right)  N!\left(  \frac{\alpha e}%
{N}\right)  ^{\frac{N}{2}}.\label{Trudgian5}%
\end{align}
Now, using (\ref{Trudgian1}), (\ref{Trudgian2}), (\ref{Trudgian4}), (\ref{Trudgian5}) together with (\ref{Sumestimate2}) and (\ref{Sumestimate3}) we obtain the
final result by
\begin{align*}
&  \sum_{\substack{\rho\\\left\vert \gamma\right\vert >A}}\int_{\omega-\eta
}^{\omega+\eta}K\left(  u-\omega\right)  ue^{-\frac{u}{2}}li\left(  e^{u\rho
}\right)  du\\
&  \leq\sqrt{\frac{\alpha}{2\pi}}\sum_{\substack{\rho\\\left\vert
\gamma\right\vert >A}}\frac{1}{\left\vert \gamma\right\vert }\left\vert
\int_{\omega-\eta}^{\omega+\eta}e^{u\left(  \rho-\frac{1}{2}\right)  }f_{\rho
}\left(  u\right)  du\right\vert \\
&  \leq\frac{4}{\sqrt{2\pi}}\left(  1+\frac{22}{Aw}\right)  A\log A\left(
\left(  1+\frac{e}{\sqrt{e}-1}\right)  \frac{1}{A}e^{-\frac{\alpha}{4}\eta
^{2}+\frac{\omega+\eta}{2}}+e\sqrt{e}\eta e^{-\frac{A^{2}}{2\alpha}%
+\frac{\omega+\eta}{2}}\right)  \\
&  \leq\left(  1+\frac{22}{Aw}\right)  A\log A\left(  \frac{13.840}%
{A}e^{-\frac{\alpha}{4}\eta^{2}+\frac{\omega+\eta}{2}}+11.951\eta
e^{-\frac{A^{2}}{2\alpha}+\frac{\omega+\eta}{2}}\right)  \\
&  =R_{5}.
\end{align*}
\end{proof}

\section{Crossovers earlier than $10^{316}$}

\label{Near crossovers}

We give a short discussion on some attractive candidates for crossovers
earlier than $10^{316}$. To this end, for a fixed $T$, we study the function%
\[
f_{T}\left(  \omega\right)  =-1-\sum_{\substack{\rho\\0<\left\vert
\gamma\right\vert \leq T}}\frac{e^{i\omega\gamma}}{\rho},
\]
which is part of the summation in Theorem \ref{theorem31}. We mention that
this sum has also been used for the first time by Lehman (\cite{Lehman}, page
406) for detecting possible crossover regions. Bays and Hudson proceeded in a
similar way for their computed plots in logarithmic scale. While we plot in
logarithmic scale by powers of $10$ we first rescale our test function by
\[
F_{T}\left(  \omega\right)  = f_{T}\left(  \omega\log10 \right)  .
\]
Similar as Bays and Hudson, we compute plots in high resolution for $F_{T}$
with $T=\gamma_{100000}$ and range $10^{20}$ to $10^{320}$. We proceed in two
stages. In a first stage, in Figure \ref{Region20320}, we split the plots in
interval-lengths $\left[  \omega,\omega+20\right]  $ and each interval plot is
computed with 500 plot points. High values (in red) near the zero line appear
to be good candidates for possible crossovers. For comparison we also include
plots for the Bays-Hudson region $\omega\in\left[  300,320\right]  $ and the
te Riele region $\omega\in\left[  360,380\right]  $. In a second stage, in
Figure \ref{EnlargedRegions}, we further present enlarged plots for some hot regions.

\noindent
\includegraphics[width=\textwidth, height=4.0cm]{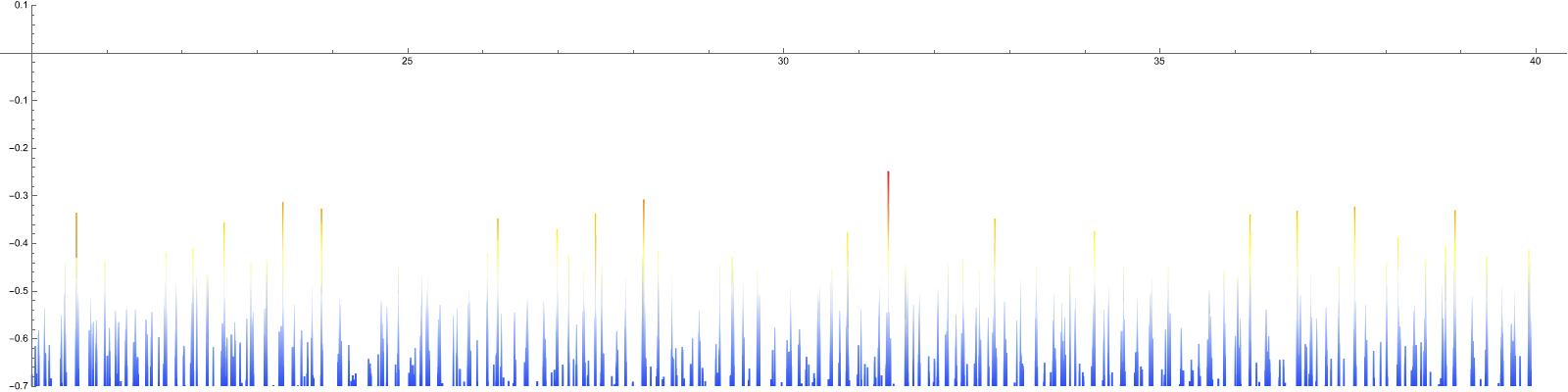}
\includegraphics[width=\textwidth, height=4.0cm]{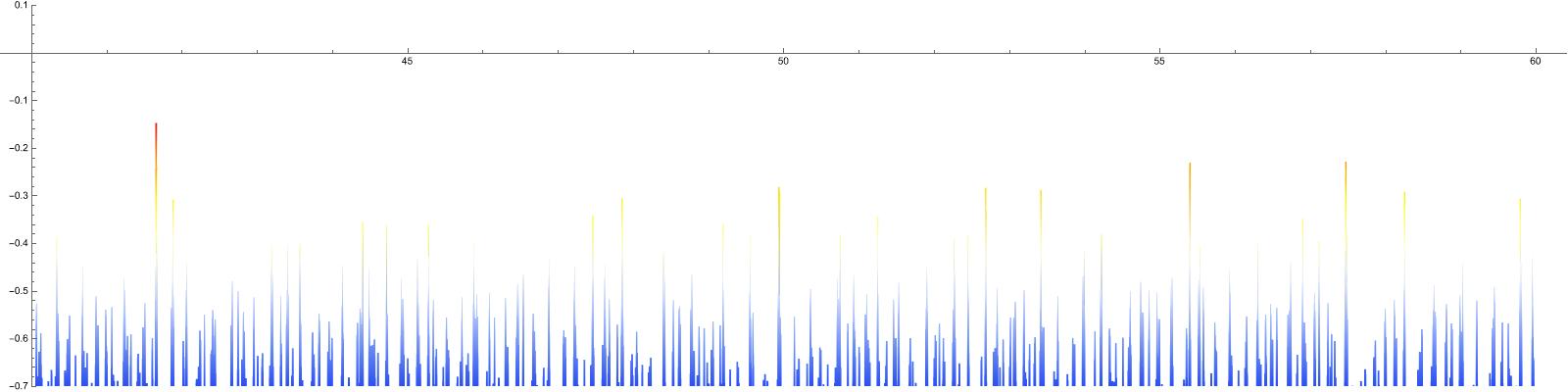}
\includegraphics[width=\textwidth, height=4.0cm]{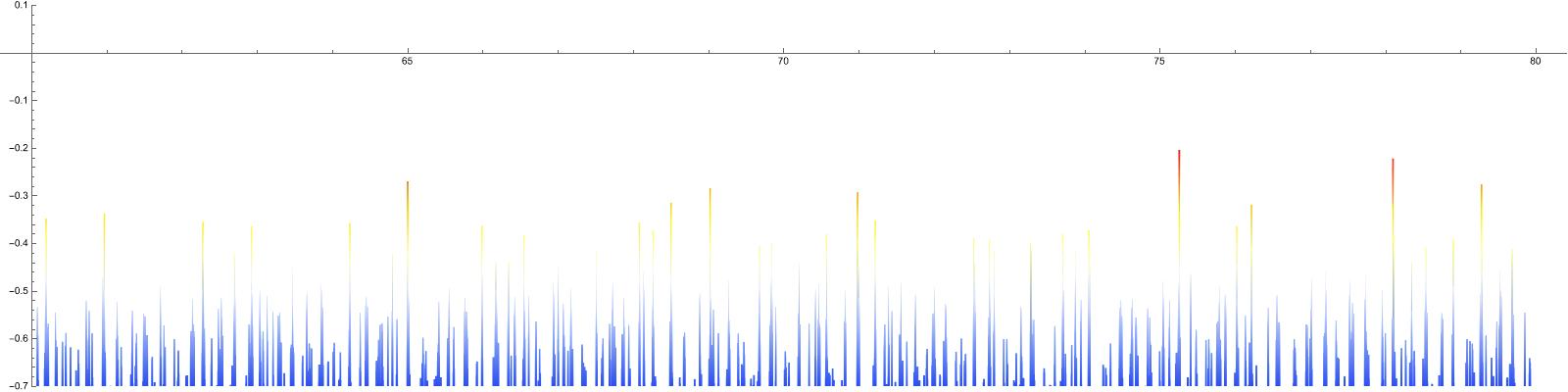}
\includegraphics[width=\textwidth, height=4.0cm]{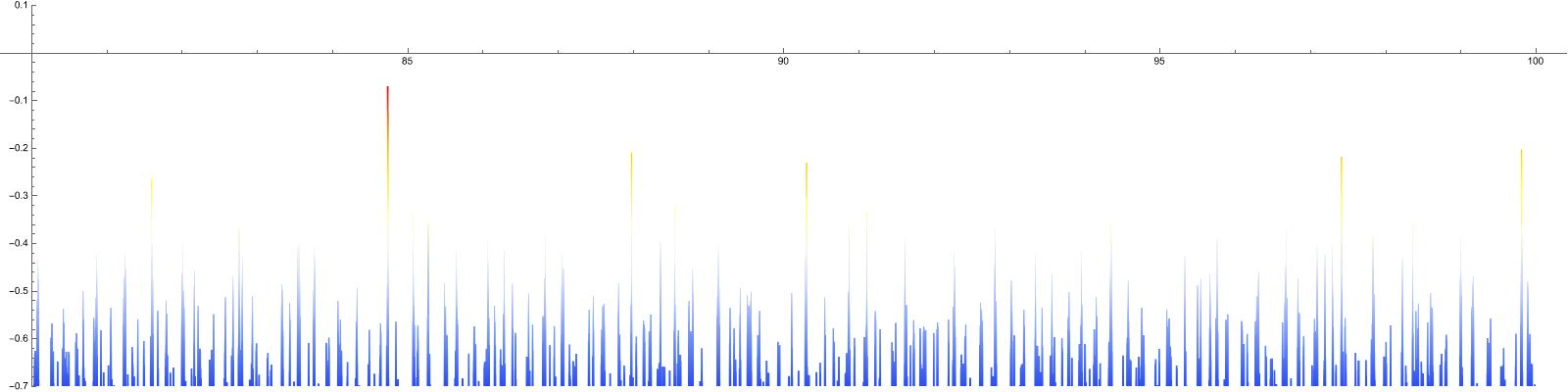}
\includegraphics[width=\textwidth, height=4.0cm]{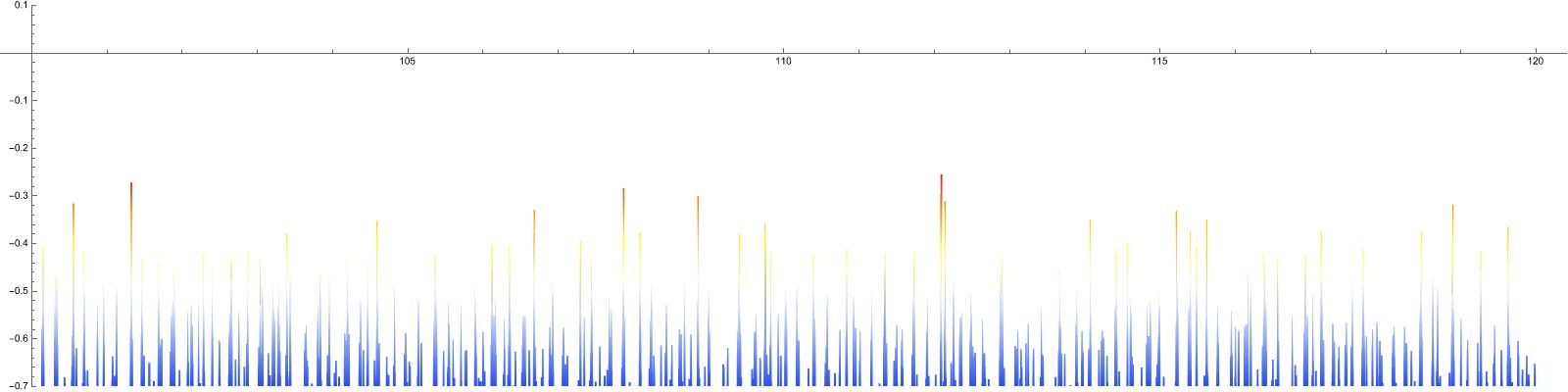}
\includegraphics[width=\textwidth, height=4.0cm]{Mike100_120}
\includegraphics[width=\textwidth, height=4.0cm]{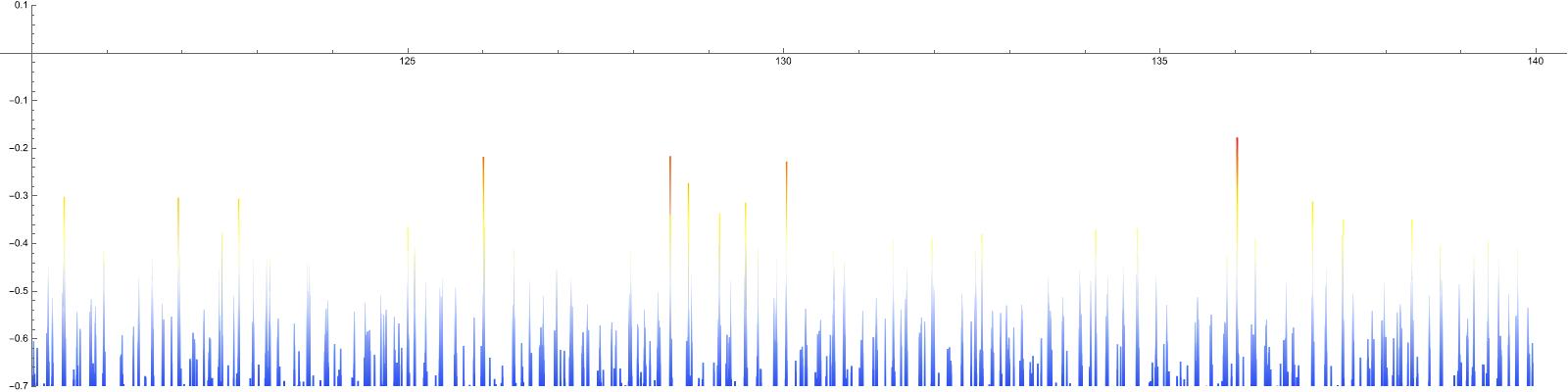}
\includegraphics[width=\textwidth, height=4.0cm]{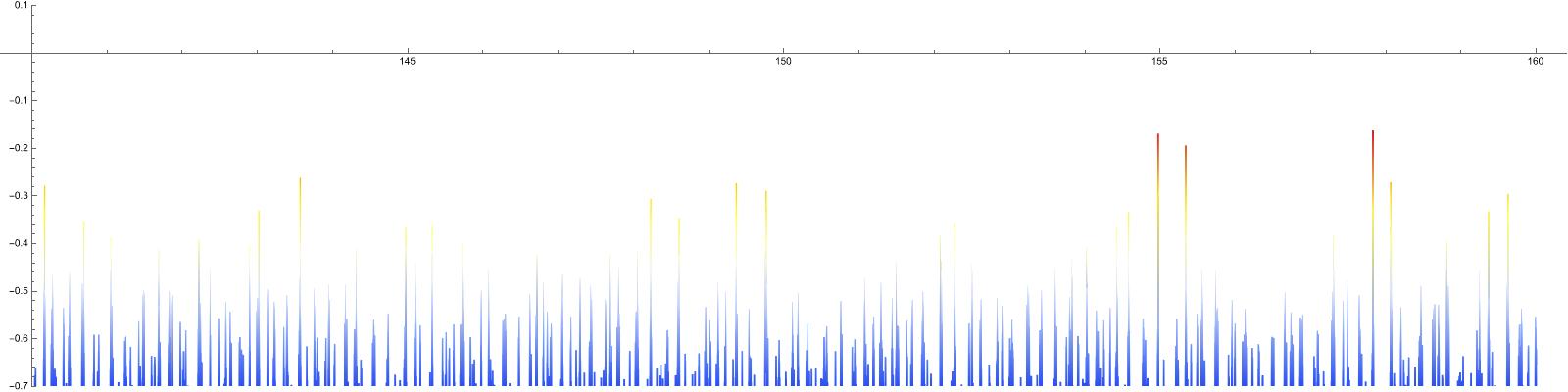}
\includegraphics[width=\textwidth, height=4.0cm]{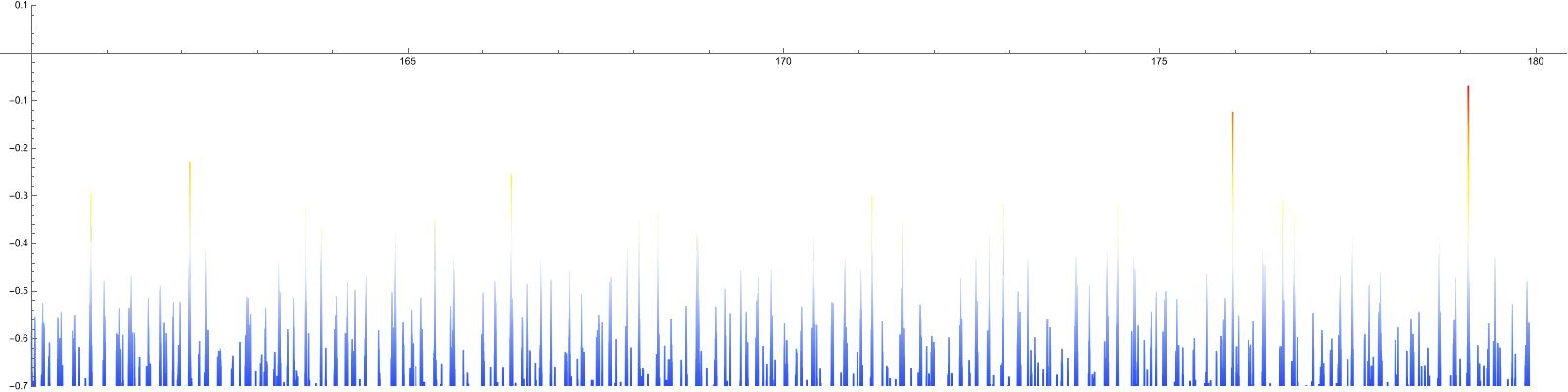}
\includegraphics[width=\textwidth, height=4.0cm]{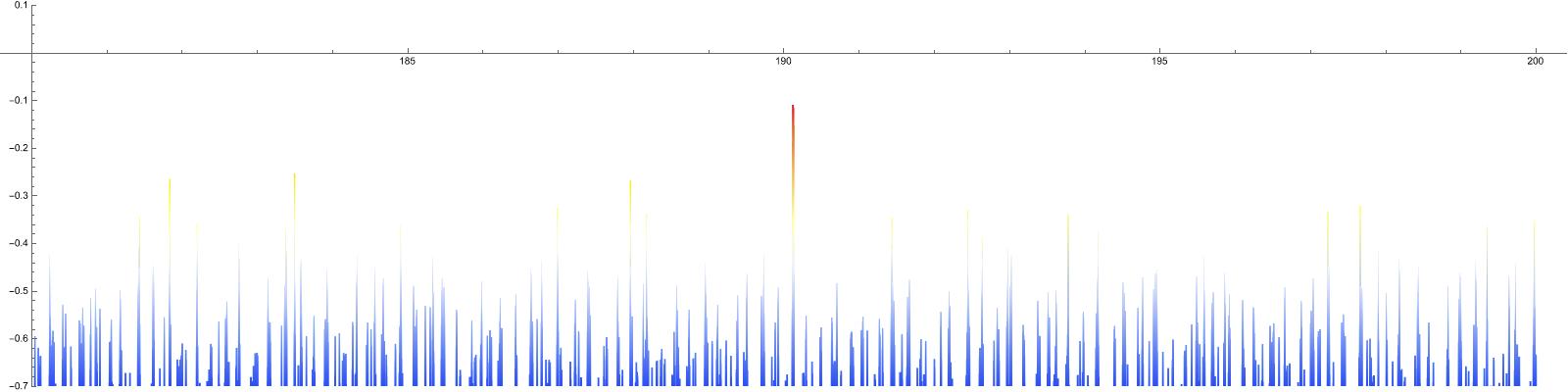}
\includegraphics[width=\textwidth, height=4.0cm]{Mike100_120}
\includegraphics[width=\textwidth, height=4.0cm]{Mike120_140}
\includegraphics[width=\textwidth, height=4.0cm]{Mike140_160}
\includegraphics[width=\textwidth, height=4.0cm]{Mike160_180}
\includegraphics[width=\textwidth, height=4.0cm]{Mike180_200}
\includegraphics[width=\textwidth, height=4.0cm]{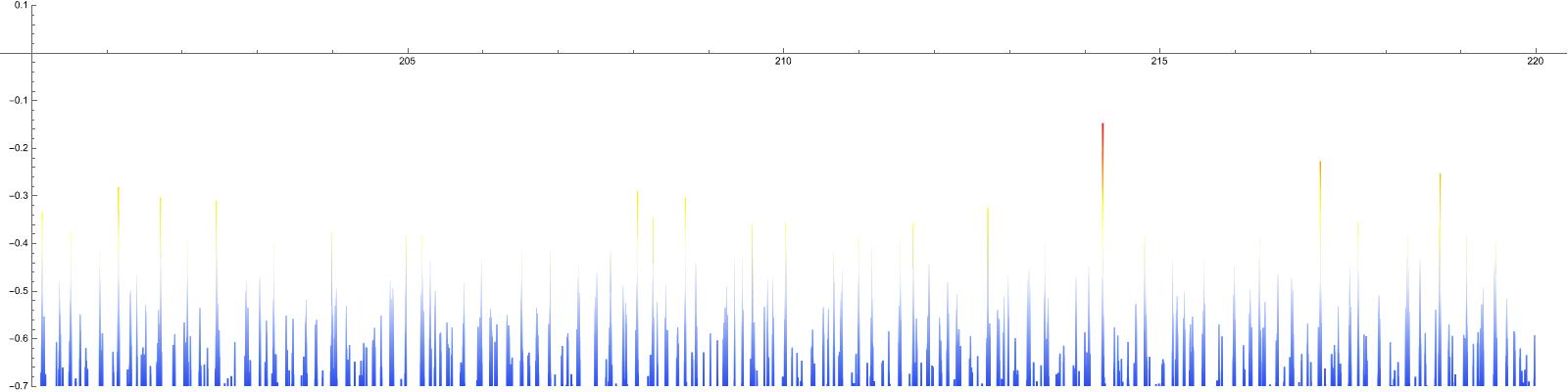}
\includegraphics[width=\textwidth, height=4.0cm]{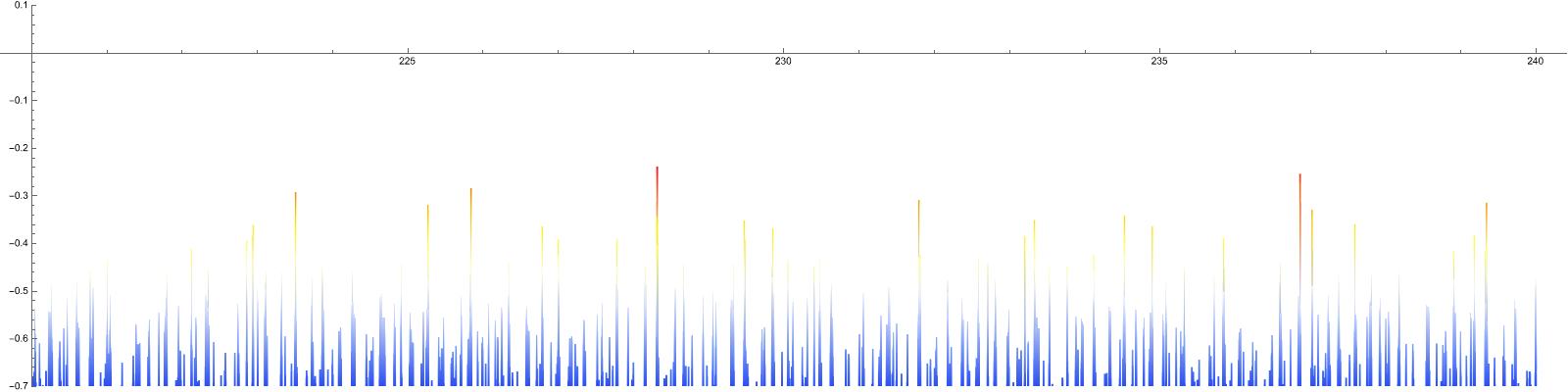}
\includegraphics[width=\textwidth, height=4.0cm]{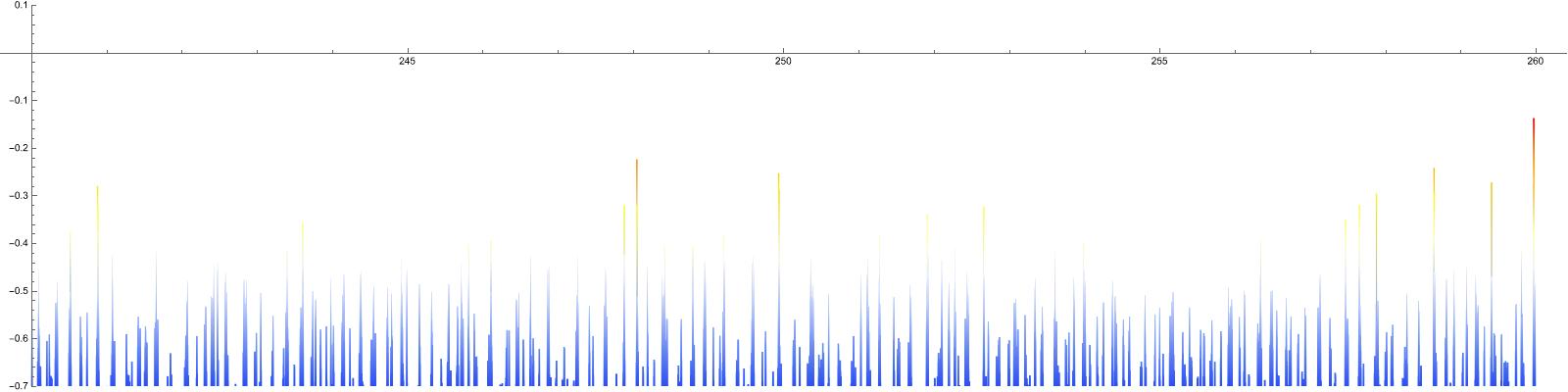}
\includegraphics[width=\textwidth, height=4.0cm]{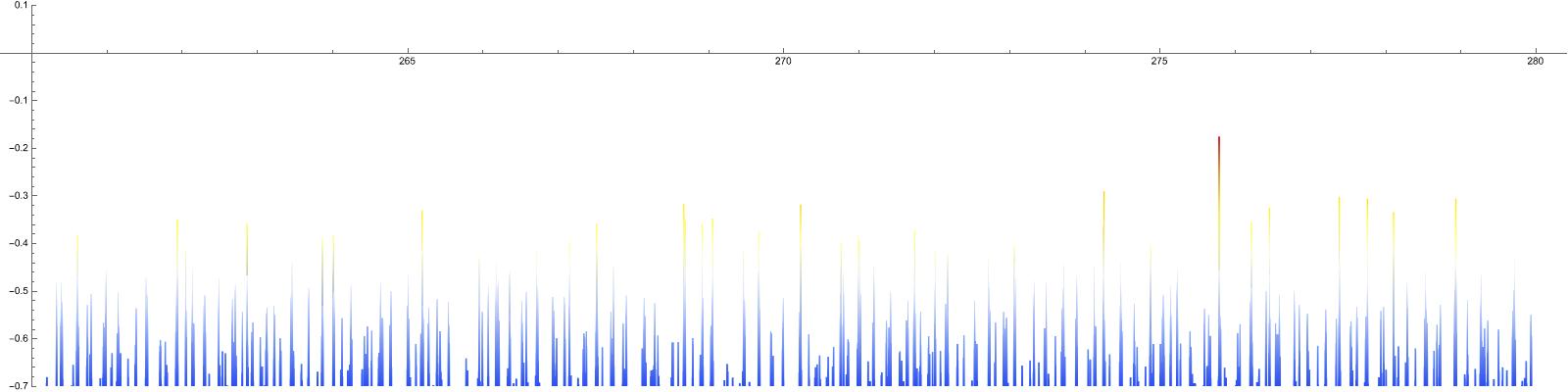}
\includegraphics[width=\textwidth, height=4.0cm]{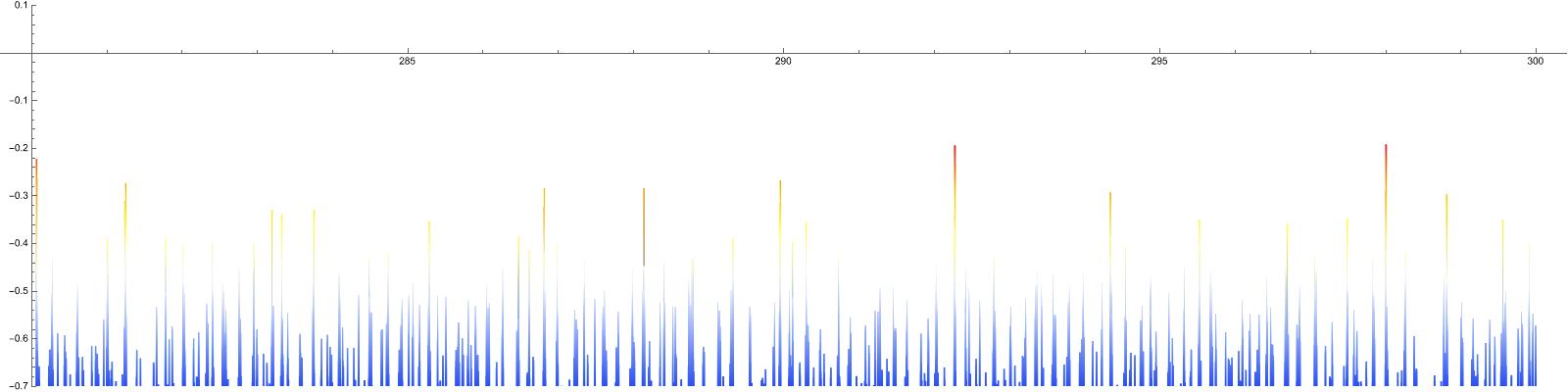}
\includegraphics[width=\textwidth, height=4.0cm]{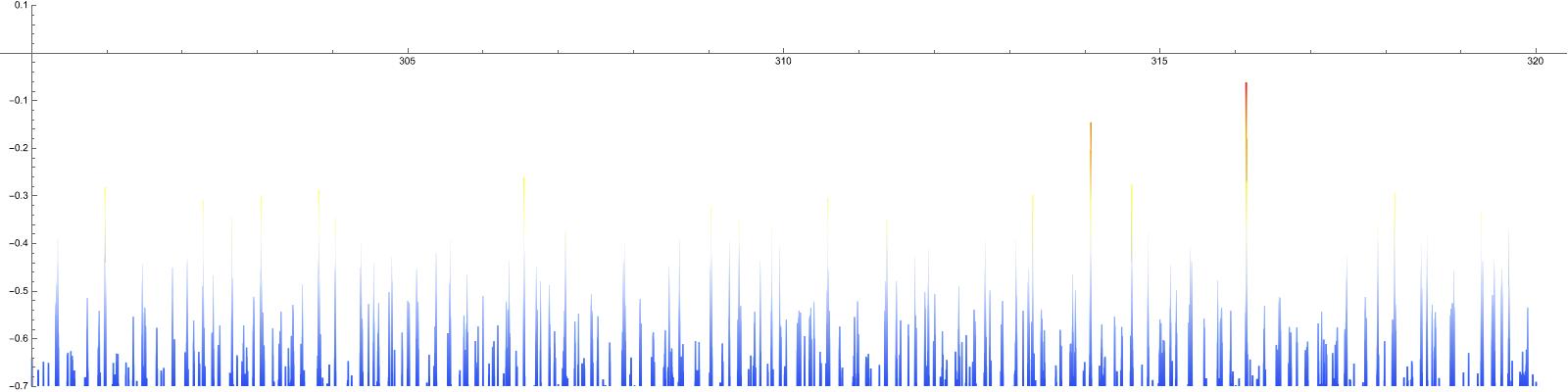}
\includegraphics[width=\textwidth, height=4.0cm]{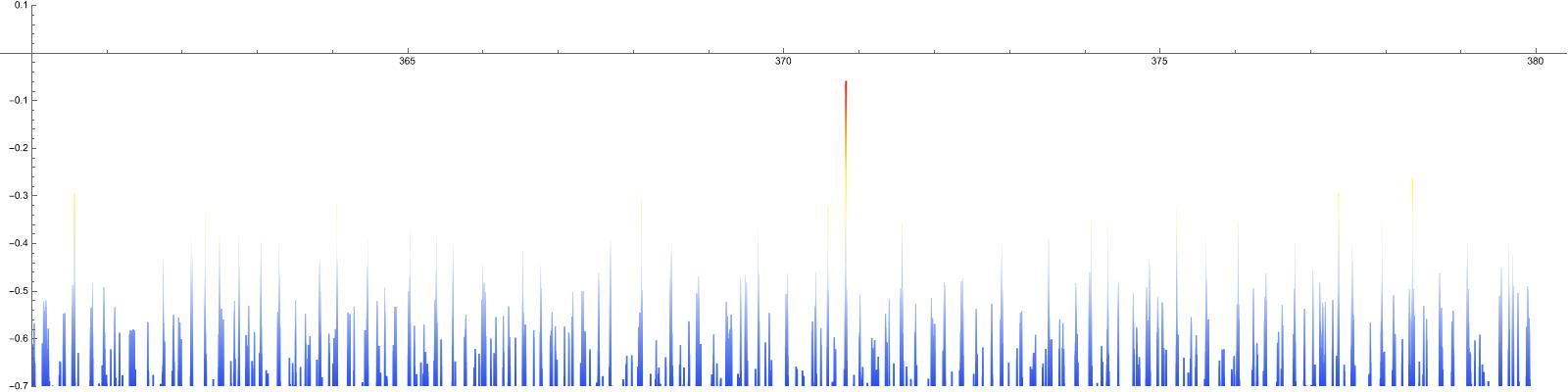}

\begin{figure}[H]
\caption{Regions $10^{20}$ to $10^{320}$, including the te Riele region.}
\label{Region20320}
\end{figure}

\noindent We will give some remarks.

\begin{remark}
The computer plots in Figure \ref{Region20320} should be
viewed very critically. Even with finest resolution of over $500$ plot points
in each subinterval consecutive points are far distant apart in our
logarithmic scale. Therefore, the computer graphics do not necessarily resolve
the real situation, especially concerning the actual height of the local maxima.
\end{remark}

\begin{remark}
The enlarged plots in Figure \ref{EnlargedRegions} are
again computed in high resolution with 500 plot points for $F_{T}$, now with
$T=\gamma_{1000000}$. From these plots we finally collect in Table
\ref{PossibleCrossovers} the following possible candidates for crossover
regions earlier then $10^{316}$. As a reference, we also include plots and sum
values $F_{T}\left(  \omega\right)  $ for the Bays-Hudson region, the te Riele
region and Lehman's region.
\end{remark}

\noindent
\includegraphics[width=\textwidth, height=4.0cm]{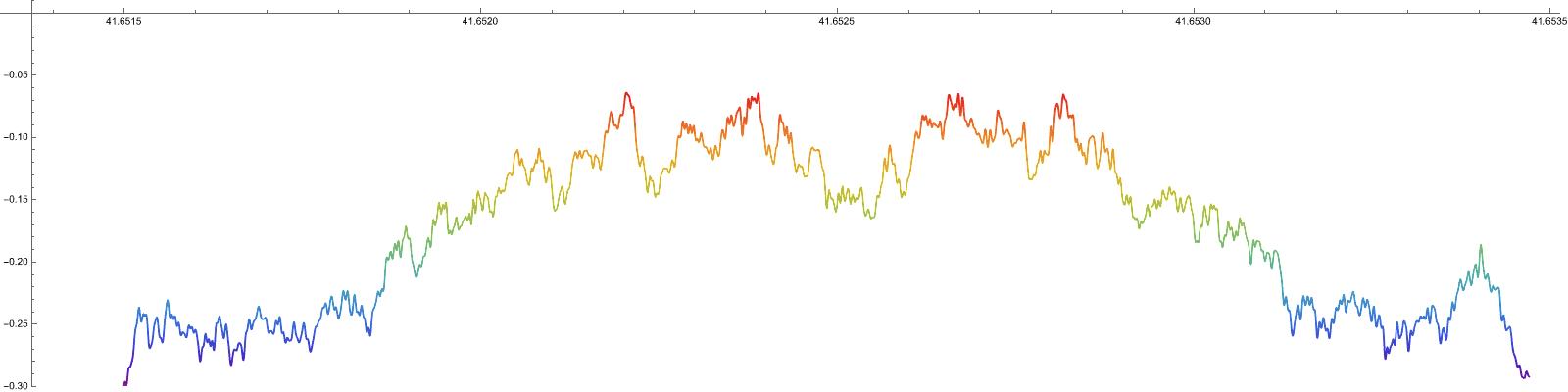}
\includegraphics[width=\textwidth, height=4.0cm]{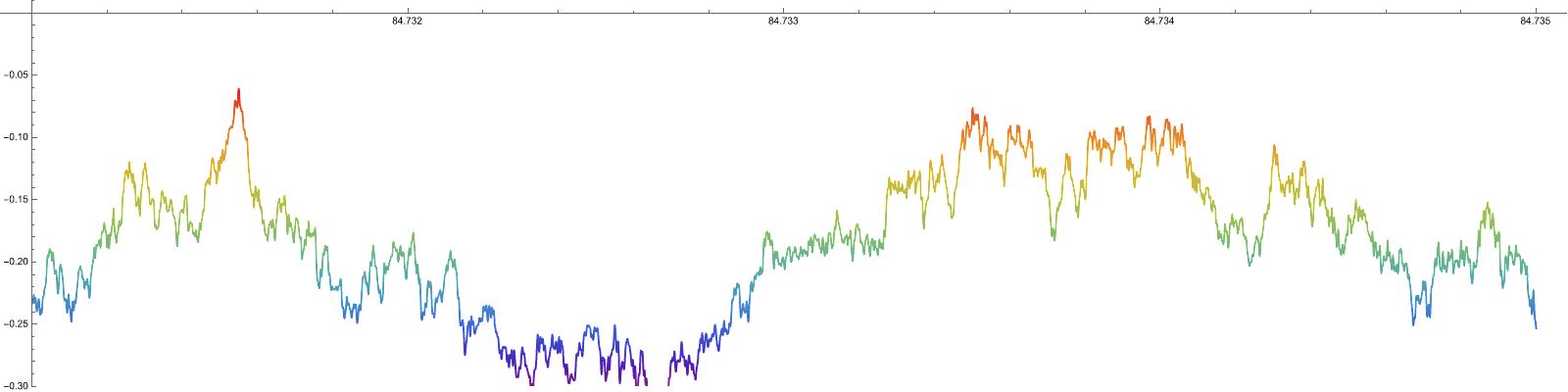}
\includegraphics[width=\textwidth, height=4.0cm]{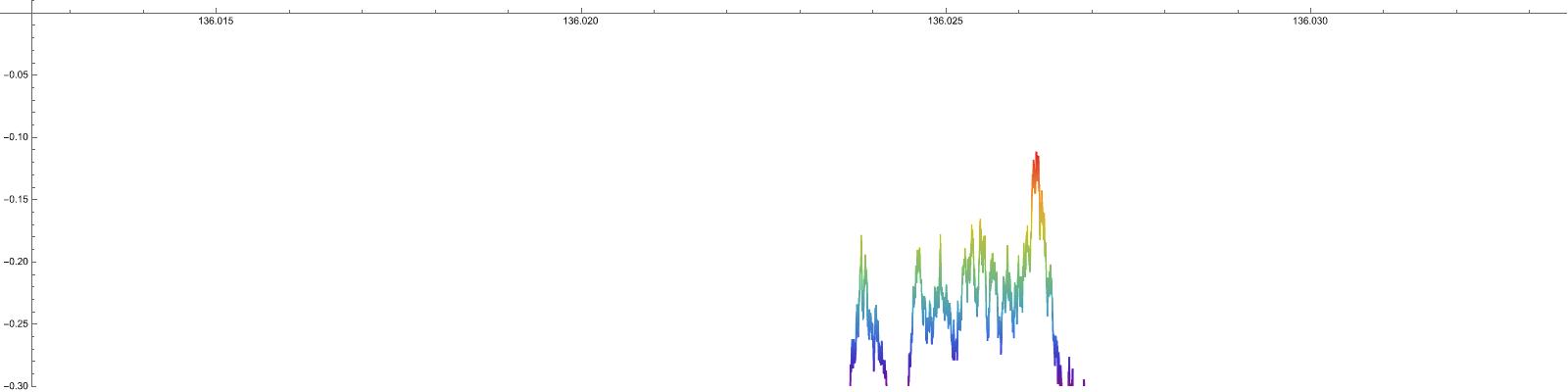}
\includegraphics[width=\textwidth, height=4.0cm]{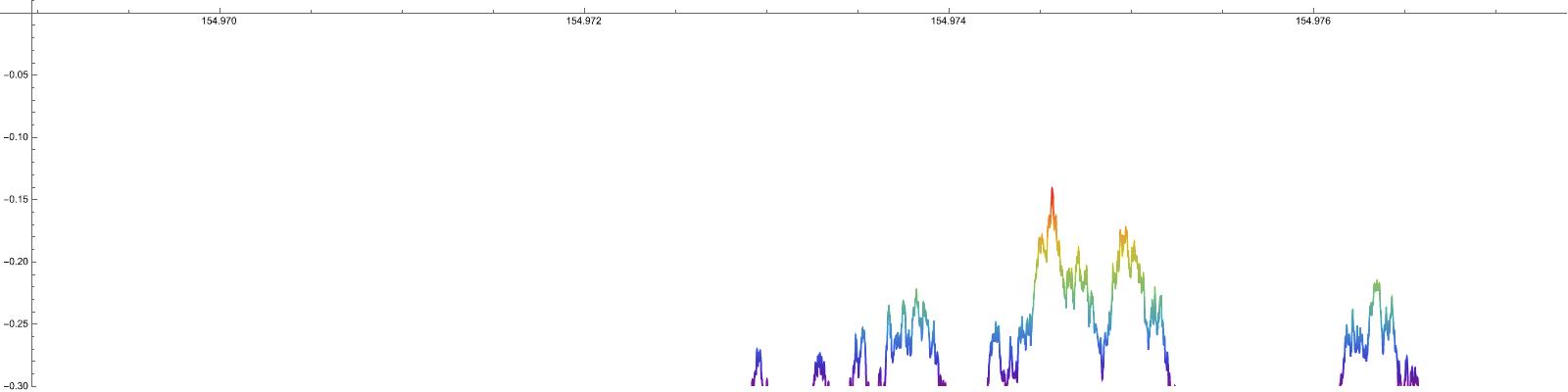}
\includegraphics[width=\textwidth, height=4.0cm]{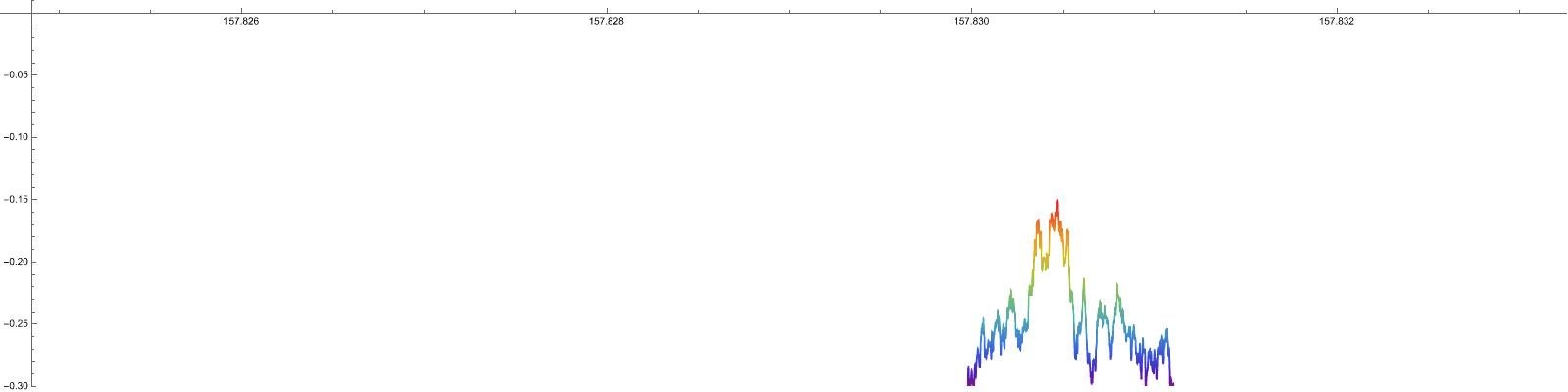}
\includegraphics[width=\textwidth, height=4.0cm]{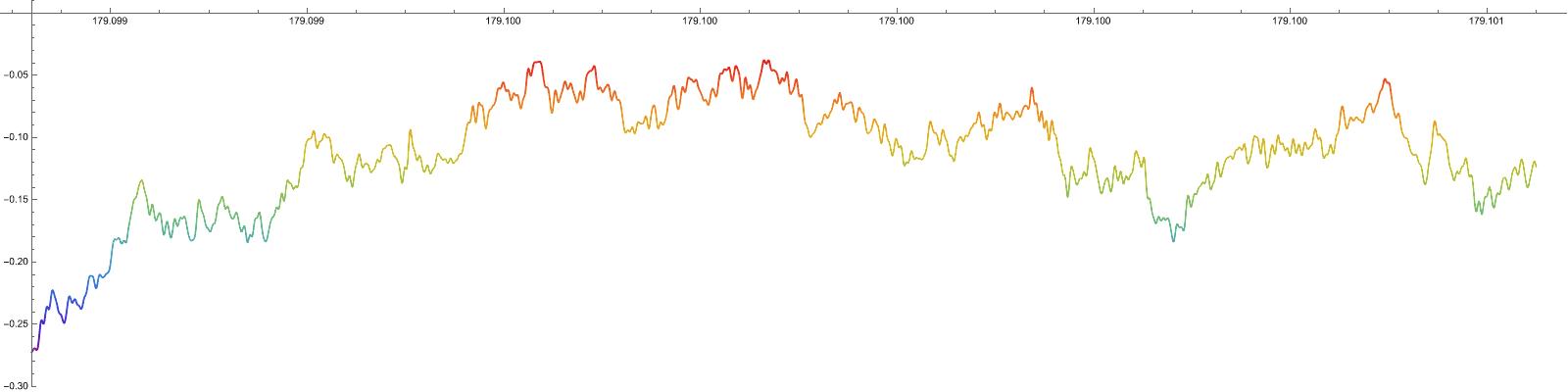}
\includegraphics[width=\textwidth, height=4.0cm]{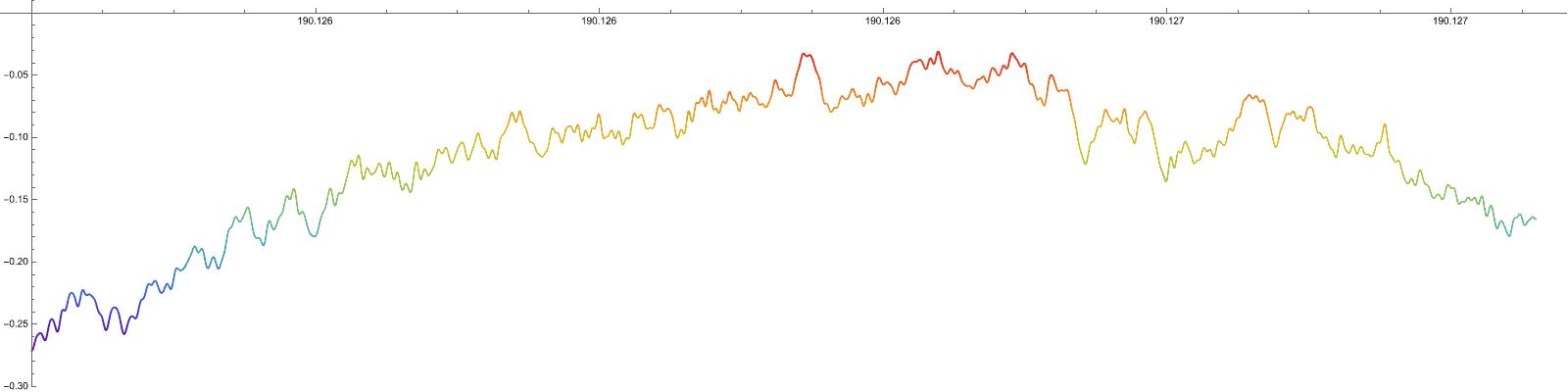}
\includegraphics[width=\textwidth, height=4.0cm]{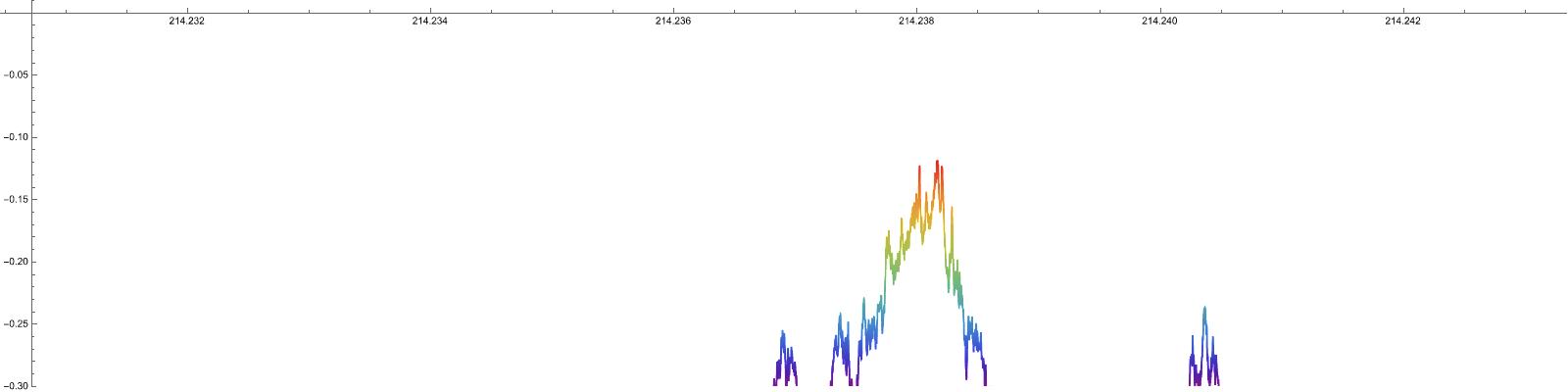}
\includegraphics[width=\textwidth, height=4.0cm]{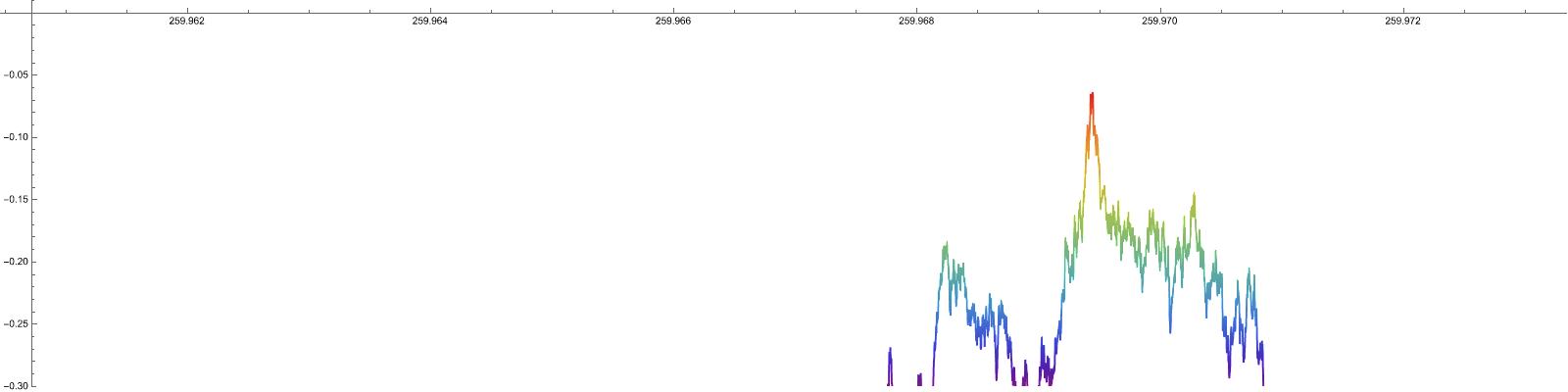}
\includegraphics[width=\textwidth, height=4.0cm]{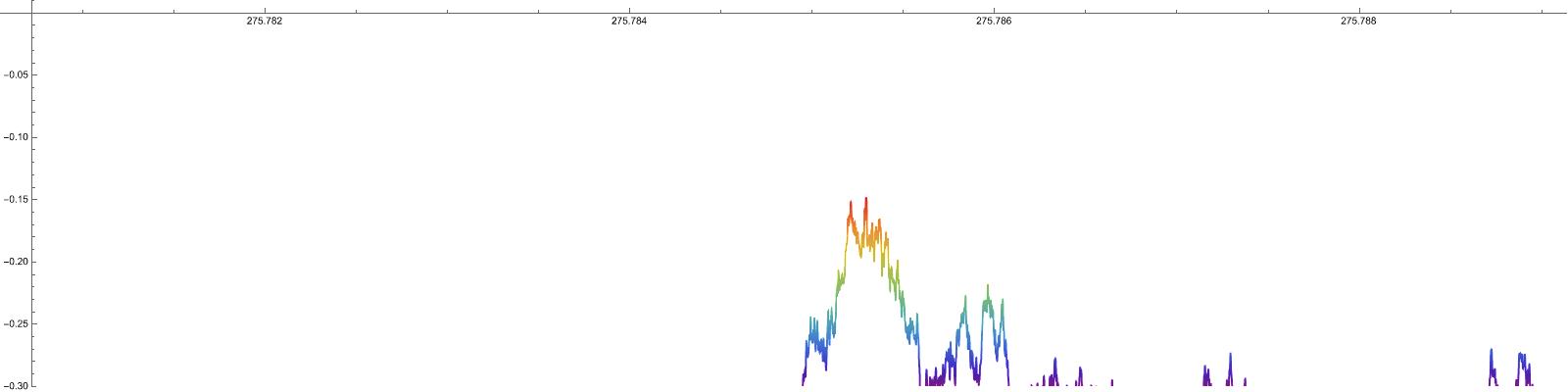}
\includegraphics[width=\textwidth, height=4.0cm]{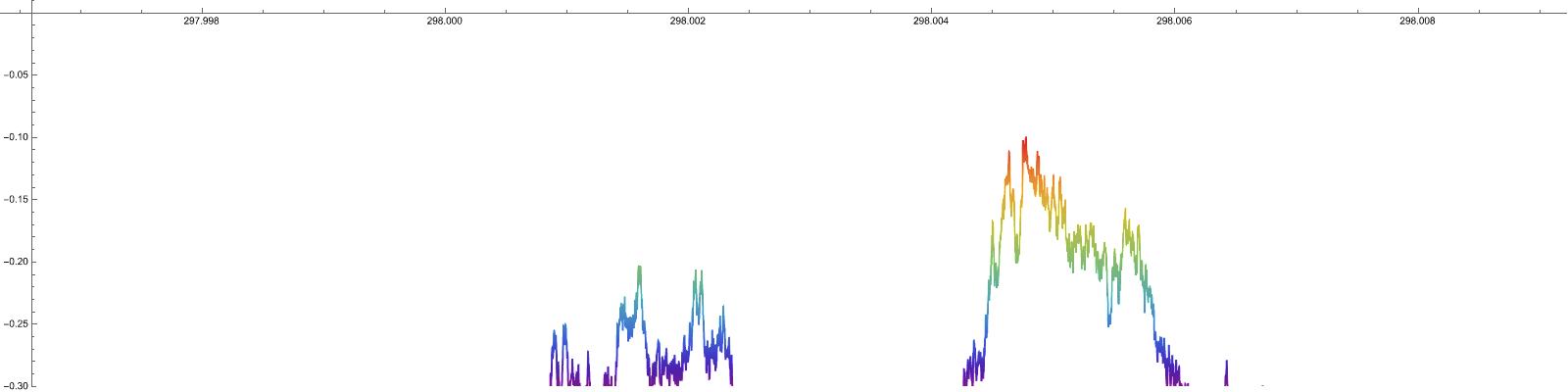}
\includegraphics[width=\textwidth, height=4.0cm]{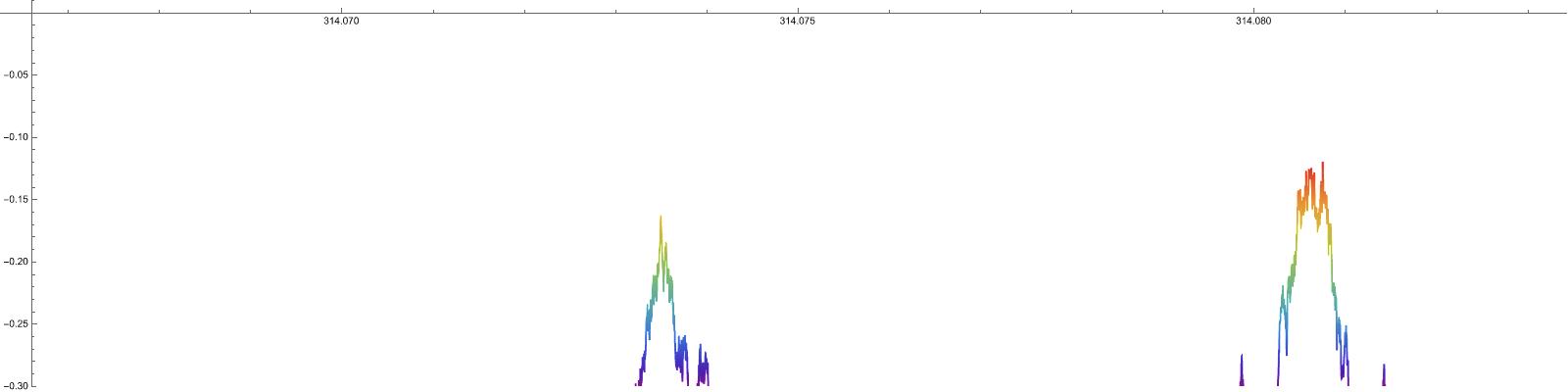}
\includegraphics[width=\textwidth, height=4.0cm]{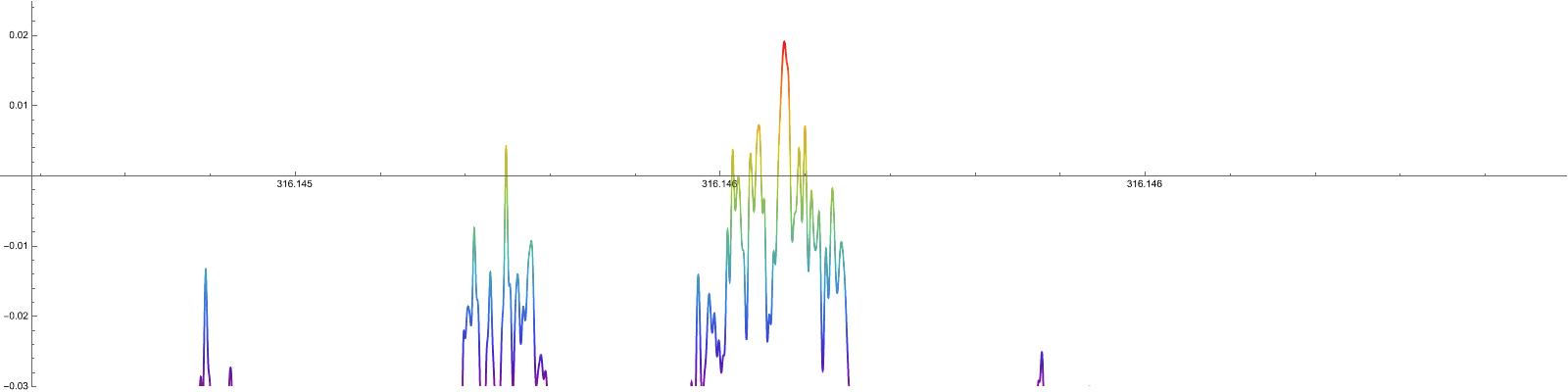}
\includegraphics[width=\textwidth, height=4.0cm]{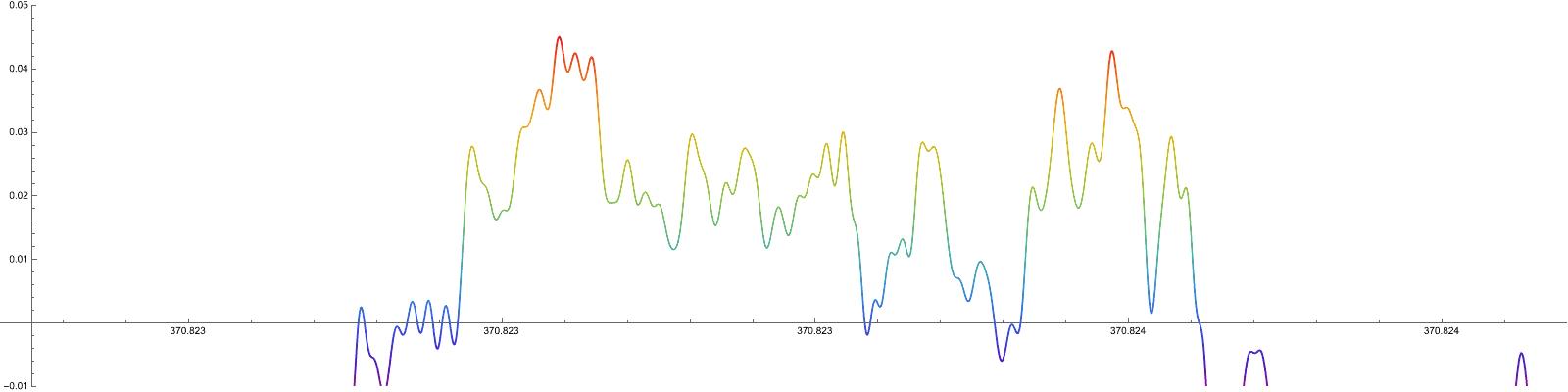}
\includegraphics[width=\textwidth, height=4.0cm]{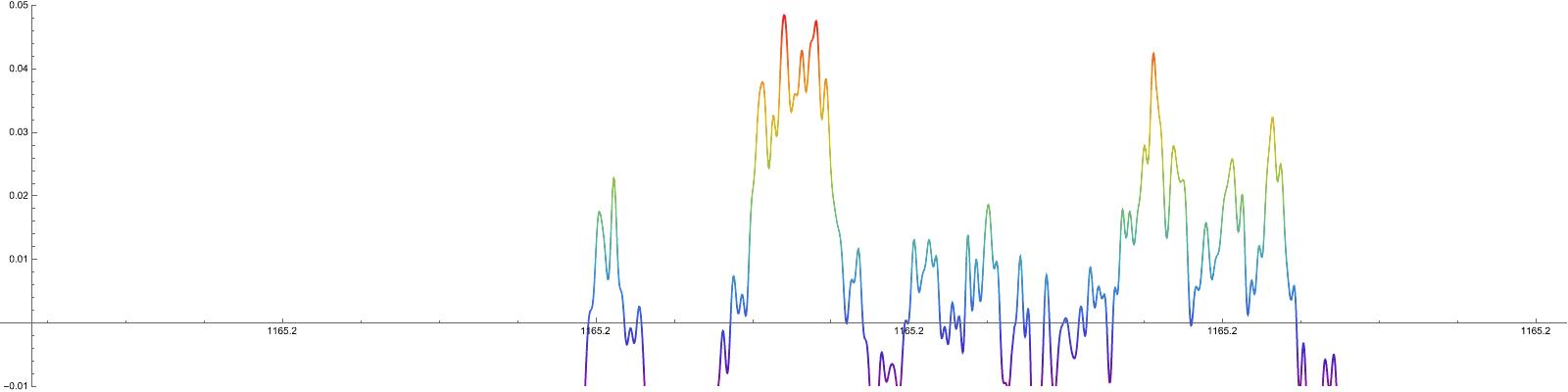}

\begin{figure}[H]
\caption{Enlarged regions for possible crossovers, including the regions of Bays-Hudson, te Riele and Lehman.}
\label{EnlargedRegions}
\end{figure}

\begin{table}[H]
\begin{center}
\begin{tabular}
[c]{|l|c|r|}\hline
\rowcolor{orange} Exponent $\omega$ & $F_{T}\left(  \omega\right)  $ &
Comment\\\hline
$41.6522$ & $-0.0659$ & \\
$84.7316$ & $-0.0597$ & \\
$136.0262$ & $-0.1127$ & \\
$154.9746$ & $-0.1389$ & \\
$157.8305$ & $-0.1502$ & \\
$175.9619$ & $-0.0859$ & Detected by Bays-Hudson\\
$179.0999$ & $-0.0366$ & Detected by Bays-Hudson\\
$190.1264$ & $-0.0313$ & Detected by Bays-Hudson\\
$214.2382$ & $-0.1174$ & \\
$259.9694$ & $-0.0626$ & Detected by Bays-Hudson\\
$275.7852$ & $-0.1478$ & \\
$298.0048$ & $-0.0993$ & Detected by Bays-Hudson\\
$314.0808$ & $-0.1176$ & \\\hline
$316.1456$ & $+0.0195$ & Bays-Hudson region, 2000\\
$370.8233$ & $+0.0453$ & te Riele region, 1987\\
$1165.2019$ & $+0.0489$ & Lehman region, 1966\\\hline
\end{tabular}
\end{center}
\caption{Possible crossovers earlier than $10^{316}$.}%
\label{PossibleCrossovers}%
\end{table}

\noindent If a crossing earlier than $10^{316}$ exists, it is likely to be at
one of the positions in Table \ref{PossibleCrossovers}. It would be very
difficult to prove that there are actual no crossings on these positions.

\clearpage

\medskip\noindent Michael Revers
\newline Department of Mathematics
\newline University Salzburg
\newline Hellbrunnerstrasse 34
\newline5020 Salzburg, AUSTRIA
\newline e-mail: michael.revers@plus.ac.at
\newline $16$-digit ORCID identifier: 0000-0002-9668-5133
\end{document}